\theoremstyle{plain}
\newtheorem{theorem}{Theorem}[section]
\newtheorem{thm}[theorem]{Theorem}
\newtheorem{lemma}[theorem]{Lemma}
\newtheorem{prop}[theorem]{Proposition}
\newtheorem{example}[theorem]{Example}
\newtheorem{definition}[theorem]{Definition}
\newtheorem{defn}[theorem]{Definition}
\newtheorem{cor}[theorem]{Corollary}
\newcommand{\Z}{\mathbb{Z}}
\DeclareMathOperator{\pred}{\mathsf{D}}
\DeclareMathOperator{\supp}{\mathsf{S}}
\DeclareMathOperator{\Tria}{HQ}
\newcommand\TabTop{\rule{0pt}{2.6ex}}
\newcommand\TabBot{\rule[-1.2ex]{0pt}{0pt}}
\begin{document}

\title[Coxeter groups and triangle avoidance]{Rationally smooth elements of Coxeter groups and triangle group avoidance}

\author{Edward Richmond}
\address{Department of Mathematics, University of British Columbia, Vancouver, BC V6T 1Z2, Canada}
\email{erichmond@math.ubc.ca}

\author{William Slofstra}
\address{Department of Mathematics, University of British Columbia, Vancouver, BC V6T 1Z2, Canada}
\email{slofstra@math.berkeley.edu}

\begin{abstract}
We study a family of infinite-type Coxeter groups defined by the avoidance of certain rank $3$
parabolic subgroups. For this family, rationally smooth elements can be detected by looking at only
a few coefficients of the Poincar\'{e} polynomial.  We also prove a factorization theorem for the
Poincar\'{e} polynomial of rationally smooth elements. As an application, we show that a large
class of infinite-type Coxeter groups have only finitely many rationally smooth elements.  Explicit
enumerations and descriptions of these elements are given in special cases.
\end{abstract}

\maketitle

\section{Introduction}

Let $W$ be a Coxeter group with finite generating reflection set $S$, and let $\ell$ and $\leq$
denote the length function and Bruhat order on $W$, respectively. Let $e\in W$
denote the identity of $W.$  By definition, $W$ is the group generated by $S$
satisfying relations $(st)^{m_{st}} = e$, where $m_{st} \in
\{1,2,3,\ldots,\infty\}$ such that $m_{st} = 1$ if and only if $s=t$.  If $m_{st} = \infty$, then
by convention the relation $(st)^{\infty} = e$ is omitted.  The Poincar\'{e} series
$$P_w(q) = \sum_{x \leq w} q^{\ell(x)}$$ of an element $w \in W$ is a polynomial of degree
$\ell(w)$. An element $w$ is said to be \emph{palindromic} (or \emph{rationally smooth}) if the coefficients of
$P_w(q)$ are the same whether read from top degree to bottom degree, or in reverse.\footnote{The
term rationally smooth seems to be more common in the literature; we use the term palindromic
to be inclusive of the non-crystallographic case.} In other words, if we write $P_w(q) = \sum a_i
q^i$, then $w$ is palindromic when $a_i = a_{\ell(w)-i}$ for all $i$.


An important question in the combinatorics of Coxeter groups is to describe the set of palindromic
elements of $W$. This question stems from its connection with the topology of Schubert varieties. A
Coxeter group is \emph{crystallographic} if $m_{st} \in \{2,3,4,6,\infty\}$ for all $s \neq t$. If
$W$ is crystallographic, then it can be realized as the Weyl group of a Kac-Moody algebra. The
Schubert subvarieties of the full flag variety corresponding to this algebra are indexed by the
elements of $W$. Carrell and Peterson prove that the Schubert variety indexed by $w$ is rationally
smooth if and only if $w$ is palindromic \cite{Ca94}.  Furthermore, $w$ is palindromic if and only
if the Kazhdan-Lusztig polynomial indexed by $(x,w)$ is equal to $1$ for all $x \leq w$
\cite{KL79,KL80}. If $W$ is crystallographic, then it is sufficient that the Kazhdan-Lusztig
polynomial indexed by $(e,w)$ be equal to $1$ \cite{Ca94}.  For
Schubert varieties of simply laced types $A,D$ and $E,$ the notion of smooth and rationally smooth are
equivalent. For finite Weyl groups, the palindromic elements are well understood.  In particular,
they can be characterized using permutation pattern-avoidance in classical types $A,B,C,$ and $D$ and using
root system avoidance in all types \cite{BL00,BP05,Bi98,LS90}. The characterization using
permutation pattern avoidance has recently been extended to the affine type $A$ case as well
\cite{BC10}. The generating series for the number of palindromic elements in $A_n$, as $n$ varies,
is also known \cite{BL00, Bo98, St94}.


While the theory of palindromic elements is well-developed for finite and affine Coxeter groups, the situation for general Coxeter groups is
quite different. In particular, it seems to be quite difficult to determine whether or not an
element of a general Coxeter group is rationally smooth. In this paper, we introduce a family of
Coxeter groups (mostly) outside the finite and affine cases, for which it is possible to determine
if an element is rationally smooth by looking at just a few coefficients of the Poincar\'{e}
polynomial. The family in question is defined as the set of all Coxeter groups which do not contain
certain triangle groups as standard parabolic subgroups. A \emph{triangle group} is a Coxeter group with
$|S|=3$. Triangle groups arise naturally in arithmetic geometry and the study of tessellations of
triangles on Riemann surfaces, see e.g. \cite{Be83}.  We will denote a triangle group by the triple $(m_{rs},m_{rt},m_{st})$ where  $S=\{r,s,t\}.$
We say a Coxeter group $W$ \emph{contains the triangle} $(a,b,c)$ if there exists a subset $\{r,s,t\}\subseteq S$ such that $(a,b,c)=(m_{rs},m_{rt},m_{st}).$  If $S$
contains no such subset, then we say $W$ \emph{avoids the triangle} $(a,b,c)$.  We are in interested in
the groups which avoid the following special set of triangle groups:
$$\Tria:=\{(2,b,c)\ |\ b,c\geq 3\ \text{and}\ b<\infty\}$$
The set $\Tria$ (Hecke quotients) is the set of quotients of the Hecke triangle group
$(2,p,\infty)$, $p \geq 3$, which is a generalization of the well-known modular
group $(2,3,\infty)$.  Every finite Coxeter group of rank $\geq 3$ contains a triangle in $\Tria$, and
the same is true of affine Coxeter groups, with the exception of $(3,3,3)$,
which is the affine group $\tilde{A}_2$. However, there are many
crystallographic Coxeter groups which do avoid $\Tria$; for example, any
Coxeter group with no commuting relations (i.e. $m_{st} \geq 3$ for all $s \neq
t$) avoids $\Tria$.  Any Coxeter group defined by only by commuting and infinite relations also avoids $\Tria$.


To state our main theorem, we make the following definition:
\begin{defn}
    Let $w$ be an element of a Coxeter group $W$, and write $P_w(q) = \sum a_i
    q^i$ for the Poincar\'{e} polynomial of $w$.  We say that $w$ is \emph{$k$-palindromic} if $a_{i} = a_{\ell(w) -
    i}$ for all $0 \leq i < k$.
\end{defn}
Note that if $k=\infty$, then we recover the usual notion of palindromic
elements, and that every element is $1$-palindromic with $a_0=a_{\ell(w)}=1.$
If $W$ is crystallographic, then $k$-palindromicity can be detected from
the Kazhdan-Lusztig polynomial. Let $T_{e,w} = 1
+ \sum_{i \geq 0} b_i q^i$ be the Kazhdan-Lusztig polynomial indexed by
$(e,w)$. A theorem of Bjorner and Ekedahl states that, for crystallographic
groups, an element $w \in W$ is $k$-palindromic if and only if $b_i = 0$ for $0
\leq i < k$ \cite{BE09} (note that $b_0 = 0$ always).


We now state the main theorem:
\begin{theorem}\label{T:main} Let $W$ be a Coxeter group which avoids all triangle groups in $\Tria.$ Then every 4-palindromic $w\in W$ is palindromic.

\smallskip

Furthermore, if $W$ avoids all triangle groups $(3,3,c)$ where $3<c<\infty$, then every $2$-palindromic $w\in W$ is palindromic.
\end{theorem}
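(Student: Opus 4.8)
The plan is to argue by induction on the rank $|S|$, reducing the general statement to an analysis of rank $3$ standard parabolic subgroups, where the triangle hypothesis does its work. First I would reduce to the case of full support. Writing $S(w) := \{s \in S : s \leq w\}$ for the support of $w$, the element $w$ lies in the standard parabolic $W_{S(w)}$, and the lower interval $[e,w]$ — hence $P_w(q)$ — is the same whether computed in $W$ or in $W_{S(w)}$. Since every standard parabolic of a $\Tria$-avoiding group again avoids $\Tria$ (and likewise avoids the $(3,3,c)$ triangles in the refined hypothesis), if $S(w) \subsetneq S$ I may replace $W$ by $W_{S(w)}$ and conclude by induction on $|S|$. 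The base cases $|S| \leq 2$ are immediate, since every element of a dihedral group is palindromic. So I assume $S(w) = S$ with $n := |S| \geq 3$.

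Second, the heart of the argument is to extract a Billey--Postnikov factorization from $4$-palindromicity, using the factorization machinery referenced in the abstract. Concretely, for a maximal subset $J = S \setminus \{s\}$ I would consider the parabolic decomposition $w = w^J w_J$, with $w_J \in W_J$ and $w^J \in W^J$ minimal in its coset, and aim to show that for a suitable choice of $s$ the coefficient matches $a_1 = a_{\ell(w)-1}$, $a_2 = a_{\ell(w)-2}$, and $a_3 = a_{\ell(w)-3}$ force the Poincar\'{e} factorization $P_w(q) = P^J_{w^J}(q)\, P_{w_J}(q)$, where $P^J_{w^J}$ is the Poincar\'{e} polynomial of the interval below $w^J$ in the quotient $W^J$. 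The mechanism is that the low-degree coefficients $a_1, a_2, a_3$ are computed directly from the Coxeter graph on $S$ together with the descent data of $w$ (in particular $a_1 = n$ under full support, and $a_2, a_3$ encode the bond strengths $m_{st}$ and the braid relations), while the top coefficients $a_{\ell(w)-i}$ record the covers of $w$; equating them pins down the local structure around the generator $s$ sharply enough to yield the decomposition.

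Once the factorization is in hand I close the induction. The factor $w_J$ has support contained in $J$, so $|S(w_J)| < n$, and it inherits $4$-palindromicity from $w$; hence $w_J$ is palindromic by the inductive hypothesis applied inside the $\Tria$-avoiding group $W_J$. The cograssmannian factor $w^J$ is handled directly rather than by induction: in a $\Tria$-avoiding group its quotient interval has the expected product structure, so $P^J_{w^J}$ is a product of $q$-integers and is manifestly palindromic. Since a product of palindromic polynomials is palindromic, $P_w = P^J_{w^J} P_{w_J}$ is palindromic, completing the full-support case and hence the induction.

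The hard part is the detection step in rank $3$: showing that, when the triangle $(m_{rs}, m_{rt}, m_{st})$ avoids $\Tria$, matching coefficients up to degree $3$ suffices to force the decomposition. This demands a case analysis over the allowed triangle types — all bonds $\geq 3$; the commuting types $(2,2,c)$; and $(2,\infty,\infty)$ — verifying in each that no $4$-palindromic element fails to be palindromic, equivalently that the excluded family $\{(2,b,c) : b,c \geq 3,\ b < \infty\}$ consists of exactly the triangles admitting $4$-palindromic-but-not-palindromic elements. For the refined statement the same rank-$3$ analysis must be sharpened: excluding in addition the triangles $(3,3,c)$ with $3 < c < \infty$ removes precisely the configurations in which degrees $2$ and $3$ carry information not already forced by degree $1$, so that the single match $a_1 = a_{\ell(w)-1}$ triggers the decomposition and the induction proceeds as before. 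I expect this rank-$3$ bookkeeping, and the precise identification of which coefficient data certifies a Poincar\'{e}-factorizing Billey--Postnikov decomposition, to be the main obstacle.
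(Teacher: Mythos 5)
Your overall architecture (reduce to full support, peel off a corank-one parabolic via a BP-type factorization, induct on the parabolic factor) parallels the paper's, but the step where you dispose of the quotient factor is wrong, and it is exactly where the content of the theorem lives. You claim that in a $\Tria$-avoiding group the cograssmannian factor $P^J_{w^J}$ ``is a product of $q$-integers and is manifestly palindromic.'' Both halves fail. A $\Tria$-avoiding group may contain the triangles $(3,3,c)$ with $3<c<\infty$ (these are \emph{not} in $\Tria$), and for such a triangle the quotient factor attached to a $2$-palindromic element can be $[\ell(v)+1]_q+q^2[\ell(v)-3]_q+q^4[\ell(v)-6]_q$ (the case $v=rstr\,stst\cdots$ of Lemma \ref{L:rank3v}, part (2)), which is $3$-palindromic but not palindromic; and even the honestly palindromic quotient factors, such as $[7]_q+q^2[3]_q$ from part (1), are not products of $q$-integers (a product of $q$-integers whose linear coefficient is $1$ must be a single $q$-integer). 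If your claim held, your induction would show that \emph{every} $2$-palindromic element of a $\Tria$-avoiding group is palindromic, making the $4$-palindromic hypothesis vacuous and contradicting the paper's example $w=s_2s_4s_2s_4s_1s_2s_4s_1s_2s_4s_2$ in the group of Figure \ref{F:dynkin1}, whose Poincar\'e polynomial factors as $[2]_q[4]_q([8]_q+q^2[4]_q+q^4[1]_q)$ and which is $3$-palindromic but not palindromic.

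Relatedly, you have misplaced where $4$-palindromicity does its work. In the paper, $2$-palindromicity alone already forces every Grassmannian factorization to be a BP-decomposition with $|\supp(v)|\leq 3$ (Theorem \ref{T:mainchar}), so your ``detection step'' matching $a_1,a_2,a_3$ against the top coefficients is not where the difficulty lies. What is actually needed, and absent from your proposal, is the classification of the possible quotient factors (the three cases of Theorem \ref{T:mainchar} and Lemma \ref{L:rank3v}) together with the observation that exactly one of them --- the $(3,3,c)$ factor above --- is non-palindromic, and that it already fails $4$-palindromicity; one then checks that a product of nonnegative polynomials containing such a factor still fails $4$-palindromicity, so $4$-palindromicity of $w$ excludes it and all remaining factors are palindromic. (That same nonnegativity computation is what your unproved assertion that ``$w_J$ inherits $4$-palindromicity from $w$'' would require.) The second statement of the theorem then falls out because avoiding $(3,3,c)$, $3<c<\infty$, removes the bad factor entirely, so $2$-palindromicity suffices --- your closing paragraph gestures at this, but it is inconsistent with the ``manifestly palindromic'' claim on which your proof of the first statement rests.
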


Given a Coxeter group, it is natural to ask whether there is a number $k$ such that every
$k$-palindromic element is palindromic. This question appears to be open in general.  Billey
and Postnikov have conjectured that if $W$ is a finite simply-laced Weyl group with $n$ generators,
then every $(n+1)$-palindromic element of $W$ is palindromic \cite{BP05}.  In type $A_n$, it is known
that every $(n-1)$-palindromic element is palindromic \cite{BP05}.

The proof of Theorem \ref{T:main} is based on a factorization theorem for the Poincar\'{e}
polynomial of $2$-palindromic elements in Coxeter groups which avoid $\Tria$.   In the classical
groups of finite type $A,B,C,$ and $D$, it is known that the Poincar\'{e} polynomial of a rationally smooth
element factors into a product of $q$-integers (see Equation \eqref{E:qinteger}) \cite{Bi98,Ga98}.
In fact, it is possible to see this factorization combinatorially, writing each palindromic element
$w$ as a reduced product $w_1 \cdots w_{|S|}$, such that each $q$-integer factor of the
Poincar\'{e} polynomial equals the (relative) Poincar\'{e} polynomial of the $w_i$'s. We prove a
similar result for $2$-palindromic elements in Coxeter groups which avoid $\Tria$.  This result has
a number of applications.  For example, we show there are many infinitely large Coxeter groups
with only a finite number of palindromic elements.  We also give explicit descriptions of palindromic elements in special cases.
In the case of uniform Coxeter groups $W(m,n),$ defined by $m_{st}=m$ for all $s\neq t$ and
$|S|=n,$ we calculate the generating series for the number of palindromic elements weighted by
length. Formulas for these generating series are stated in Propositions \ref{P:Uniform1} and
\ref{P:Uniform2}. We also observe that the $\Tria$-avoiding groups form the largest class of
Coxeter groups for which our factorization theorem can hold.

\subsection{Organization} Section \ref{S:background} contains some background
material and elementary lemmas used to state the factorization theorem. Section \ref{S:mainresults}
states the main factorization theorem and its consequences, including the proof of Theorem
\ref{T:main} and enumerative results.  In Section \ref{S:HQgroups}, we consider triangle groups in the set $\Tria$
and prove the main results cannot hold for any Coxeter group containing these triangle groups.
Section \ref{S:descents} gives some elementary lemmas on the descent sets of Coxeter
groups avoiding $\Tria$. Finally, Section \ref{S:Proof} proves the main factorization theorem.

\section{Background and terminology}\label{S:background}

Let $W$ be a Coxeter group with simple generator set $S.$  For basic facts on Coxeter groups, we
refer the reader to \cite{BB05}.  Let $\ell(w)$ denote the length of $w\in W.$  We say $w=uv\in W$
is a \emph{reduced factorization} if $\ell(w)=\ell(u)+\ell(v).$  A special type of reduced
factorization can be constructed from any subset $J \subseteq S.$  Let $W_J$ denote the standard
parabolic subgroup of $W$ generated by $J.$  Let $W^J$ denote the set of minimal length coset
representatives of $W_J\backslash W.$  Every element $w \in W$ can be written uniquely as $w = uv$
where $u \in W_J$, $v \in W^J$ and $\ell(w)=\ell(u)+\ell(v).$   We call this reduced factorization
of $w$ the \emph{parabolic decomposition} with respect to $J$.


Let $\leq$ denote the Bruhat order on $W.$  If $u\leq v \in W$, then the interval $[u,v]$ denotes
the set of elements $x \in W$ such that $u \leq x \leq v$.  For any $w\in W$ we can define the
Poincar\'{e} polynomial

$$P_w(q):= \sum_{x \in [e,w]} q^{\ell(x)}.$$

The Poincar\'{e} polynomial relative to $J\subseteq S$ of an element $w \in W$ is defined to be

$$P_w^J(q) := \sum_{x \in [e,w] \cap W^J} q^{\ell(x)}.$$

If $w \in W^J$, then $P_w^J(q)$ is a polynomial of degree $\ell(w).$  If $J=\emptyset$, then
$P_w^J(q)=P_w(q)$.  Recall that for any $J,$ the poset $[e,w] \cap W_J$ has a unique maximal
element. The following proposition is due to Billey and Postnikov in \cite[Theorem 6.4]{BP05}.
\begin{prop}[\cite{BP05}]\label{P:para}
   Let $J\subseteq S$ and let $w=uv$ be a parabolic decomposition with respect to $J.$  Then $u$ is the unique maximal element of $[e,w] \cap W_J$ if and only if
    $$P_w(q) = P_u(q) \cdot P_v^J(q).$$
\end{prop}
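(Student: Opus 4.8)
The plan is to prove the biconditional in Proposition~\ref{P:para} by analyzing how the Bruhat interval $[e,w]$ interacts with the parabolic decomposition $w=uv$. The key structural fact I would rely on is the following standard property of parabolic decompositions: if $w=uv$ with $u\in W_J$ and $v\in W^J$, then for any $x\leq w$ the parabolic decomposition $x=u'v'$ (with $u'\in W_J$, $v'\in W^J$) satisfies $u'\leq u$ and $v'\leq v$. This is the Bruhat-order compatibility of parabolic decomposition, and it gives an injective map $[e,w]\to [e,u]\times([e,v]\cap W^J)$ sending $x\mapsto(u',v')$, which is length-additive since $\ell(x)=\ell(u')+\ell(v')$. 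The whole proof hinges on understanding precisely when this map is a bijection onto the product.

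First I would establish the easy direction. Suppose $u$ is the unique maximal element of $[e,w]\cap W_J$. I claim the map above is surjective: given any $u'\leq u$ in $W_J$ and any $v'\leq v$ in $[e,v]\cap W^J$, I need $u'v'\leq w$. The key point is a lifting/concatenation lemma for Bruhat order under parabolic decomposition—that $u'v'\leq uv$ whenever $u'\leq u$ in $W_J$ and $v'\leq v$ in $W^J$, provided the maximality condition holds. Granting surjectivity, the length-additive bijection immediately yields
\[
P_w(q)=\sum_{x\leq w}q^{\ell(x)}=\Bigl(\sum_{u'\leq u}q^{\ell(u')}\Bigr)\Bigl(\sum_{v'\in[e,v]\cap W^J}q^{\ell(v')}\Bigr)=P_u(q)\cdot P_v^J(q),
\]
using that $P_u(q)=\sum_{u'\leq u}q^{\ell(u')}$ over $W_J$ (all elements $\leq u$ lie in $W_J$ since $W_J$ is Bruhat-closed below its elements) and the definition of $P_v^J$.

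For the converse, I would argue contrapositively. Suppose $u$ is \emph{not} the unique maximal element of $[e,w]\cap W_J$; since that poset always has a unique maximum (as recalled just before the proposition), there is some $\tilde u\in[e,w]\cap W_J$ with $u<\tilde u$ but $\tilde u\not\leq u$. I would then compare degrees and leading behavior: the product $P_u(q)\cdot P_v^J(q)$ has degree $\ell(u)+\ell(v)=\ell(w)$, matching $\deg P_w(q)$, so a mismatch must appear in lower-order terms. Because the map $x\mapsto(u',v')$ is always an injection with $\ell(x)=\ell(u')+\ell(v')$, we always have the coefficientwise inequality $P_w(q)\leq P_u(q)\cdot P_v^J(q)$ (comparing coefficients), with equality in every degree exactly when the map is onto $[e,u]\times([e,v]\cap W^J)$. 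The existence of $\tilde u\leq w$ with $\tilde u\not\leq u$ shows the map misses the pair corresponding to $\tilde u$ (whose $W_J$-component is not $\leq u$), so strict inequality holds in some coefficient and the factorization fails.

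I expect the main obstacle to be the surjectivity/lifting lemma in the forward direction: proving that the maximality of $u$ in $[e,w]\cap W_J$ is exactly what licenses assembling arbitrary $u'\leq u$ and $v'\leq v$ into an element $u'v'\leq w$. This requires the lifting property of Bruhat order together with a careful induction on $\ell(u)$ (or on the number of generators in a reduced word for $u$), peeling off a simple reflection $s\in J$ and using that $u'\leq u$ and the maximality condition are preserved under multiplication; the subtlety is ensuring the concatenated element stays $\leq w$ rather than merely $\leq$ some larger element, which is precisely where the hypothesis that $u$ (not a strictly larger $W_J$-element) is the maximum of $[e,w]\cap W_J$ enters. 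The converse and the product-formula bookkeeping are comparatively routine once this lemma is in hand.
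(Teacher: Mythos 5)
You should first note that the paper itself does not prove Proposition~\ref{P:para}: it cites \cite[Theorem 6.4]{BP05} and merely remarks that the proof there extends from finite Weyl groups to all Coxeter groups, so your attempt has to be measured against the standard Billey--Postnikov argument. Measured that way, it contains a genuine error. The ``key structural fact'' you rely on is half false: for $x \le w$ with parabolic decompositions $x = u'v'$ and $w = uv$, the claim $v' \le v$ is indeed standard (the projection $W \to W^J$ is order-preserving), but $u' \le u$ is \emph{false} in general. Take $W$ of type $A_2$ with $S = \{s_1,s_2\}$, $J = \{s_1\}$, and $w = s_2s_1$; then $w \in W^J$, so $u = e$ and $v = s_2s_1$, yet $x = s_1 \le w$ has $u' = s_1 \not\le e$. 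In fact, ``$u' \le u$ for every $x \le w$'' is \emph{equivalent} to the hypothesis that $u$ is the maximal element of $[e,w] \cap W_J$, so it cannot be invoked unconditionally on either side of the biconditional. In the same example $P_w(q) = 1 + 2q + q^2$ while $P_u(q)P_v^J(q) = 1 + q + q^2$, so your claimed unconditional coefficientwise inequality $P_w \le P_u \cdot P_v^J$ is also false; the true unconditional inequality goes the other way.

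This breaks your converse direction beyond repair as written. The correct unconditional statement is that the concatenation map $[e,u] \times \bigl([e,v]\cap W^J\bigr) \to [e,w]$, $(a,b) \mapsto ab$, is well defined, injective, and length-additive: one shows $ab \le ub$ by the subword property and $ub \le uv$ by induction on $\ell(u)$ using the lifting property --- and, contrary to what you assert, this needs \emph{no} maximality hypothesis. Hence $P_u P_v^J \le P_w$ coefficientwise, always. Maximality enters exactly where you believed you had a free fact: it is what guarantees $u' \le u$ for every $x \le w$, i.e.\ that the concatenation map is surjective, which proves the forward direction. For the converse, if $u$ is not maximal, let $\bar u$ be the maximal element of $[e,w]\cap W_J$; then $\bar u \in [e,w]$ is not in the image of the concatenation map (its parabolic decomposition is $\bar u \cdot e$ with $\bar u \not\le u$), so the injection is not surjective, and evaluating at $q=1$ gives $P_u(1)P_v^J(1) < P_w(1)$, so the factorization fails. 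In summary: your forward direction is salvageable modulo the deferred concatenation lemma (which you correctly flag as the main obstacle but wrongly think requires maximality), while your converse rests on an inequality stated in the wrong direction and on the false structural fact, and must be replaced by the counting argument above.
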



While the proof of Proposition \ref{P:para} given in \cite{BP05} is stated only for finite Weyl
groups, it easily extends to all Coxeter groups.  A parabolic decomposition $w=uv$ is called
a \emph{BP-decomposition} of $w$ if $u$ is the unique maximal element of $[e,w] \cap W_J$.


For any $w\in W,$ define the sets
\begin{align*}
\supp(w)&:=\{u\leq w\ |\ \ell(u)=1\}\\
\pred(w)&:=\{u\leq w\ |\ \ell(u)=\ell(w)-1\}\\
D_R(w)&:=\{s\in S\ |\ \ell(ws)<\ell(w)\}\\
D_L(w)&:=\{s\in S\ |\ \ell(sw)<\ell(w)\}.
\end{align*}
The sets $\supp(w)$ and $\pred(w)$ are known as the support and divisor sets of $w.$  The sets $D_R(w)$ and $D_L(w)$ are called the \emph{right} and \emph{left descent sets} of $w$ respectively and are contained in $\supp(w)$.  We use these sets to give an equivalent characterization of a BP-decomposition.

\begin{lemma}\label{L:BPlemma}
A parabolic decomposition $w=uv$ is a BP-decomposition if and only if $\supp(v)\cap J\subseteq D_R(u).$
\end{lemma}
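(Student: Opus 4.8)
The plan is to prove the two implications of the equivalence separately, using only the subword property and the lifting property of Bruhat order \cite{BB05}; since $u$ occupies the left-hand factor of $w=uv$, every use of these properties will be in its right-handed form. The key reformulation is that $w=uv$ is a BP-decomposition exactly when $[e,w]\cap W_J=[e,u]$, i.e.\ when no element of $W_J$ lies below $w$ without already lying below $u$.

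For the implication that a BP-decomposition forces $\supp(v)\cap J\subseteq D_R(u)$, I would argue the contrapositive. Suppose $s\in(\supp(v)\cap J)\setminus D_R(u)$, and fix reduced words $s_1\cdots s_p$ for $u$ and $t_1\cdots t_q$ for $v$, so that $s_1\cdots s_p\,t_1\cdots t_q$ is reduced for $w$. As $s\in\supp(v)$, it occurs as some letter $t_j$; deleting every letter of the $v$-part except that one occurrence exhibits $s_1\cdots s_p\,t_j$ as a subword of $w$ with product $us$, and this subword is reduced because $s\notin D_R(u)$ gives $\ell(us)=\ell(u)+1$. Hence $us\le w$, and since $us\in W_J$ with $us>u$, the element $u$ is not maximal in $[e,w]\cap W_J$, so $w=uv$ is not a BP-decomposition.

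For the reverse implication I would prove the following slightly stronger statement by induction on $\ell(v)$: \emph{for any reduced factorization $w=uv$ with $u\in W_J$ and $\supp(v)\cap J\subseteq D_R(u)$, every $x\in W_J$ with $x\le w$ satisfies $x\le u$.} (The more general hypothesis is needed because peeling a letter off $v$ need not preserve membership in $W^J$.) The base case $v=e$ is trivial. For the inductive step, write $w=w_0s$ where $s$ is the last letter of a reduced word for $v$ and $w_0=uv_0$; then $\supp(v_0)\cap J\subseteq\supp(v)\cap J\subseteq D_R(u)$, so the inductive hypothesis applies to $w_0$. Given $x\in W_J$ with $x\le w$, apply the lifting property to the right descent $s$ of $w$. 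If $xs>x$ then $x\le w_0$, and induction gives $x\le u$. If instead $xs<x$, then $s\in D_R(x)\subseteq\supp(x)\subseteq J$, so $s\in J$ and hence $s\in\supp(v)\cap J\subseteq D_R(u)$; the lifting property now gives $xs\le w_0$, induction (with $xs\in W_J$) gives $xs\le u$, and since $s\in D_R(u)$ we may multiply back up to conclude $x=(xs)s\le u$. With the stronger statement in hand, the parabolic decomposition case gives $[e,w]\cap W_J=[e,u]$, whose unique maximal element is $u$, so $w=uv$ is a BP-decomposition.

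I expect the reverse implication to be the main obstacle, and within it the step where the peeled generator $s$ lies in $J$. There a direct descent of $x$ below $w_0$ is unavailable; one must instead descend $xs$ below $w_0$, apply the inductive hypothesis, and then re-ascend below $u$, and it is precisely in this re-ascent that the hypothesis $s\in D_R(u)$ is used. Verifying that this re-ascent is legitimate---that $xs$ stays in $W_J$ and that $s\in D_R(u)$ really does place $x$ back inside $[e,u]$---is the only delicate point; the easy implication and the reduced-word bookkeeping are routine.
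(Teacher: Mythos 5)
Your proof is correct, and while your forward direction coincides with the paper's (the contrapositive: a letter $s\in(\supp(v)\cap J)\setminus D_R(u)$ yields $us\le w$, $us\in W_J$, $us>u$, contradicting maximality), your reverse direction is genuinely different. The paper starts from the maximum element $\bar u$ of $[e,w]\cap W_J$, whose existence and uniqueness it has recalled as a known fact; it then writes $\bar u=u'v'$ as a reduced product with $u'\le u$, $v'\le v$, chosen to maximize $\ell(u')$, and uses the lifting/subword property to show that if $v'\neq e$ one can move a left descent $y$ of $v'$ (which lies in $\supp(v)\cap J\subseteq D_R(u)$) across the factorization to get $u'y\le u$, $yv'\le v$, contradicting maximality of $\ell(u')$; hence $\bar u\le u$. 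You instead strengthen the statement by dropping the requirement $v\in W^J$ and induct on $\ell(v)$, peeling the last letter $s$ of $v$ and applying the lifting property to each $x\in W_J$ below $w$, with the re-ascent $x=(xs)s\le u$ legitimized exactly by $s\in D_R(u)$ -- the delicate point you correctly flag. Your route buys self-containedness: it never invokes the (nontrivial, recalled) fact that $[e,w]\cap W_J$ has a maximum, and it directly establishes the stronger conclusion $[e,w]\cap W_J=[e,u]$, of which the BP property is an immediate consequence. The paper's route is shorter given that recalled fact and needs no strengthened induction hypothesis, at the cost of an extremal argument whose justification (compatible reduced factorizations of $\bar u$, and the extension step) is left somewhat terse.
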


\begin{proof}If $w=uv$ is a BP-decomposition, then $u$ is the unique longest element of $[e,w] \cap W_J$.
If there exists $x\in \supp(v)\cap J$ and $x\notin D_R(u)$, then $\ell(ux)=\ell(u)+1$ and
$ux\in[e,w] \cap W_J$ which is a contradiction.


Conversely, assume that $\supp(v)\cap J\subseteq D_R(u)$ and let $\bar u$ denote the maximal
element in $[e,w] \cap W_J.$  Since $\bar u$ is unique, we have that $u\leq\bar u.$   We now show
that $\bar u \leq u.$  Let $$\bar u=u'v'$$ be a reduced factorization which maximizes $\ell(u')$
under the conditions that $u'\leq u$ and $v'\leq v.$  Suppose that $v'\neq e.$  Then there exists
$y\in D_L(v')\backslash D_R(u').$  By assumption, we have that $y\in D_R(u).$
Taking a reduced decomposition for $u$ with $y$ appearing at the end, we see that $u'\leq uy,$ and hence $u'$ can be extended, a contradiction.


\end{proof}

We remark that one direction of Lemma \ref{L:BPlemma} is proved in \cite[Lemma 10]{OY10}.  Another property of BP-decompositions is the following lemma.

\begin{lemma}\label{L:BPassoc} Let $J_1\subseteq J_2\subseteq S$ and let $v_1v_2v_3$ be a reduced factorization such that $v_1v_2$ and $(v_1v_2)(v_3)$ are BP-decompositions with respect to $J_1$
and $J_2$ respectively.  Then $v_1(v_2v_3)$ is a BP-decomposition with respect to $J_1$.  \end{lemma}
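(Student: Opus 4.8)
The plan is to verify the two conditions that, by Lemma \ref{L:BPlemma}, characterize a BP-decomposition: first that $v_1(v_2v_3)$ is a genuine parabolic decomposition with respect to $J_1$, and second that $\supp(v_2v_3)\cap J_1\subseteq D_R(v_1)$. Before doing either I would record the consequences of the hypotheses. From the parabolic decomposition $(v_1v_2)(v_3)$ with respect to $J_2$ we have $v_1v_2\in W_{J_2}$ and $v_3\in W^{J_2}$; since $v_1\in W_{J_1}\subseteq W_{J_2}$ this gives $v_2=v_1^{-1}(v_1v_2)\in W_{J_2}$, while $v_2\in W^{J_1}$ comes from the first hypothesis. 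Reducedness of $v_1v_2v_3$ forces $\ell(v_2v_3)=\ell(v_2)+\ell(v_3)$ and $\ell(v_1(v_2v_3))=\ell(v_1)+\ell(v_2v_3)$, so both $v_2v_3$ and $v_1(v_2v_3)$ are reduced factorizations. Applying Lemma \ref{L:BPlemma} to the two hypotheses also records $\supp(v_2)\cap J_1\subseteq D_R(v_1)$ and $\supp(v_3)\cap J_2\subseteq D_R(v_1v_2)$.

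For the parabolic condition I must show $v_2v_3\in W^{J_1}$, i.e. $D_L(v_2v_3)\cap J_1=\emptyset$. Here I would use that, because $v_2\in W_{J_2}$, $v_3\in W^{J_2}$ and the product is reduced, $v_2\cdot v_3$ is the parabolic decomposition of $v_2v_3$ with respect to $J_2$. For any $s\in J_2$, left-multiplying this decomposition by $s$ keeps $sv_2\in W_{J_2}$ and $v_3\in W^{J_2}$, so $(sv_2)(v_3)$ is again a parabolic decomposition and hence $\ell(s\,v_2v_3)=\ell(sv_2)+\ell(v_3)$; comparing with $\ell(v_2v_3)=\ell(v_2)+\ell(v_3)$ shows that $s\in D_L(v_2v_3)$ if and only if $s\in D_L(v_2)$. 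Since $v_2\in W^{J_1}$ has $D_L(v_2)\cap J_1=\emptyset$ and $J_1\subseteq J_2$, this yields $D_L(v_2v_3)\cap J_1=\emptyset$, as required.

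It remains to check $\supp(v_2v_3)\cap J_1\subseteq D_R(v_1)$. Writing $\supp(v_2v_3)=\supp(v_2)\cup\supp(v_3)$, the part coming from $v_2$ is handled immediately by $\supp(v_2)\cap J_1\subseteq D_R(v_1)$. The crux is the part coming from $v_3$: given $s\in\supp(v_3)\cap J_1$, we have $s\in\supp(v_3)\cap J_2\subseteq D_R(v_1v_2)$, and I must promote this to $s\in D_R(v_1)$. I would argue by contradiction: if $s\notin D_R(v_1)$ then $v_1\neq v_1v_2$, so $v_1<v_1v_2$, and the lifting property of Bruhat order \cite{BB05}, applied with $s\in D_R(v_1v_2)\setminus D_R(v_1)$, gives $v_1s\leq v_1v_2$. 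But $v_1s\in W_{J_1}$ and $\ell(v_1s)=\ell(v_1)+1$, so $v_1s$ lies in $[e,v_1v_2]\cap W_{J_1}$ strictly above $v_1$, contradicting the defining property of the BP-decomposition $v_1v_2$ that $v_1$ is the unique maximal element of this set. Hence $s\in D_R(v_1)$, completing the verification. The main obstacle is exactly this last promotion from $D_R(v_1v_2)$ to $D_R(v_1)$: right descents interact poorly with the left-coset factorization $v_1v_2$, and the clean route is to invoke the maximality built into the definition of a BP-decomposition through the lifting property, rather than to manipulate reduced words by hand.
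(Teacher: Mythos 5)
Your proof is correct, but it takes a genuinely different route from the paper's. The paper argues directly from the definition of a BP-decomposition: since $W_{J_1}\subseteq W_{J_2}$, the unique maximal element $u$ of $[e,v_1v_2v_3]\cap W_{J_1}$ lies in $[e,v_1v_2v_3]\cap W_{J_2}$ and hence satisfies $u\leq v_1v_2$; therefore $u$ is maximal in $[e,v_1v_2]\cap W_{J_1}$, and the BP-hypothesis on $v_1v_2$ forces $u=v_1$. That is the entire proof --- two sentences, no descent sets, no lifting property. You instead verify the two conditions needed to invoke Lemma \ref{L:BPlemma}: the parabolic condition $v_2v_3\in W^{J_1}$, proved via the length additivity $\ell(ab)=\ell(a)+\ell(b)$ for $a\in W_{J_2}$, $b\in W^{J_2}$, and the support condition $\supp(v_2v_3)\cap J_1\subseteq D_R(v_1)$, proved by promoting $s\in D_R(v_1v_2)$ to $s\in D_R(v_1)$ with the lifting property. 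Both steps are sound. What your approach buys: it is fully explicit about a point the paper glosses over, namely that $v_1(v_2v_3)$ is actually a parabolic decomposition (i.e.\ $v_2v_3\in W^{J_1}$), which is formally required before one may call it a BP-decomposition; the paper's maximality argument leaves this to the reader (it does follow, by comparing $v_1$ with the parabolic part of $v_1v_2v_3$ with respect to $J_1$). What the paper's approach buys: brevity and economy of tools. It is also worth noticing that your final contradiction --- if $s\in D_R(v_1v_2)\setminus D_R(v_1)$ then $v_1s$ lies in $[e,v_1v_2]\cap W_{J_1}$ strictly above $v_1$ --- is precisely the paper's maximality mechanism in localized form, so the two proofs ultimately run on the same engine, accessed through different interfaces.
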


\begin{proof}By definition, we have $v_1v_2$ is maximal in $[e, v_1v_2v_3]\cap W_{J_2}.$  In particular, if $u$ denotes the maximal element in $[e,v_1v_2v_3]\cap W_{J_1},$
then $u\leq v_1v_2$ since $W_{J_1}\subseteq W_{J_2}.$  But now $u$ is maximal in $[e,v_1v_2]\cap W_{J_1},$ which implies that $u=v_1.$  \end{proof}

Clearly, if $P_w(q)=\sum a_i q^i,$ then $|\supp(w)|=a_{1}$ and $|\pred(w)|=a_{\ell(w)-1}.$  We now consider a special class of parabolic decompositions.

\begin{defn}
We say that $w=uv$, a parabolic decomposition with respect to $J$, is a Grassmannian factorization if
$J=\supp(u)$ and $|\supp(w)|=|\supp(u)|+1$
\end{defn}


It is easy to see that every element $w \in W$ of length $\geq 2$ has a
Grassmannian factorization. The term ``Grassmannian" comes from the fact that $v$ is a Grassmannian element
of $W$ which, by definition, has $|D_L(v)|=1.$  Note that a Grassmannian factorization is not necessarily a BP-decomposition.  Although elementary, this concept is quite useful.
For example, we can use it to prove:

\begin{lemma}\label{L:predgeqsupp}
    $|\pred(w)| \geq |\supp(w)|$.
\end{lemma}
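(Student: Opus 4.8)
The plan is to establish the inequality $|\pred(w)| \geq |\supp(w)|$ by constructing an explicit injection from $\supp(w)$ into $\pred(w)$. Recall that $\supp(w)$ consists of the generators $s \in S$ with $s \leq w$, while $\pred(w)$ consists of the coatoms of the interval $[e,w]$, i.e. the elements $u \leq w$ with $\ell(u) = \ell(w) - 1$. So I want to associate to each generator appearing in $w$ a distinct element of length $\ell(w)-1$ lying below $w$.

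First I would use the Grassmannian factorization machinery just introduced. Every $w$ of length $\geq 2$ admits a Grassmannian factorization $w = uv$ with respect to $J = \supp(u)$, where $v$ is Grassmannian (so $|D_L(v)| = 1$) and $|\supp(w)| = |\supp(u)| + 1$. The single extra generator in $\supp(w) \setminus \supp(u)$ is the unique left descent of $v$; call it $s$. This suggests an induction on $\ell(w)$: the base cases $\ell(w) \leq 1$ are trivial since then $|\supp(w)| = \ell(w) \leq 1 \leq |\pred(w)|$ (when $\ell(w)=1$ we have $\pred(w) = \{e\}$ and $\supp(w) = \{w\}$). For the inductive step, the idea is to handle the generators supported in $u$ using the inductive hypothesis applied to $u$, and to account for the extra generator $s$ separately.

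The key step is to produce the injection. For each generator $t \in \supp(u)$, by induction there is a predecessor $u_t \in \pred(u)$; I would then attempt to show that $u_t v \leq w$ has length $\ell(w) - 1$, giving an element of $\pred(w)$, using that $w = uv$ is reduced so $\ell(u_t v) = \ell(u_t) + \ell(v) = \ell(w) - 1$ provided $u_t v$ remains a reduced product (which should follow from $\ell(u_tv) \le \ell(u_t)+\ell(v) = \ell(w)-1$ together with $u_t v \le uv = w$). Separately, for the extra generator $s$, deleting the initial letter of $v$ (the left descent $s$) produces $v' = sv$ with $\ell(v') = \ell(v) - 1$, and then $uv' = usv$ is a candidate predecessor accounting for $s$. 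I must then verify these $|\supp(u)| + 1 = |\supp(w)|$ elements are \emph{distinct} predecessors of $w$.

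\emph{The hard part} will be guaranteeing injectivity and that each constructed element genuinely lies in $[e,w]$ with the correct length, especially reconciling the predecessors coming from $u$ with the one coming from the extra generator $s$. The subtlety is that $u_t v$ and $u v'$ live in different parabolic strata, so I would argue distinctness by comparing parabolic decompositions with respect to $J = \supp(u)$: the elements $u_t v$ have $W_J$-part of length $\ell(u)-1$ while $uv'$ has $W_J$-part $u$ of full length $\ell(u)$, and the uniqueness of parabolic decomposition then separates them. An alternative, possibly cleaner route avoiding a delicate induction would be to appeal directly to a known lower bound on the number of coatoms of a Bruhat interval (the interval $[e,w]$ has at least as many coatoms as the rank of its bottom-to-top "thickness"), but the Grassmannian factorization approach is most in the spirit of the preceding lemmas and keeps the argument self-contained.
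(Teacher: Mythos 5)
Your overall strategy—induction on $\ell(w)$ via a Grassmannian factorization $w=uv$ with respect to $J=\supp(u)$, lifting predecessors of $u$ to predecessors of $w$, and producing one extra predecessor to account for the extra generator—is exactly the paper's. But the step producing that extra predecessor fails as you stated it. You delete the unique \emph{left} descent $s$ of $v$ (its first letter), setting $v'=sv$ and proposing $uv'$ as a coatom of $[e,w]$. The problem is that $sv$ need not lie in $W^J$, so $u(sv)$ need not be a reduced product, and its length can collapse far below $\ell(w)-1$. Concretely, take $W$ of type $A_2$ with $S=\{s_1,s_2\}$, $w=s_1s_2s_1$, $u=s_1$, $v=s_2s_1$, $J=\{s_1\}$: this is a Grassmannian factorization, the unique left descent of $v$ is $s_2$, and your candidate is $uv'=s_1(s_2\cdot s_2s_1)=s_1s_1=e$, which is certainly not in $\pred(w)$. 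The paper instead deletes a \emph{right} descent: $v'=vs$ with $s\in D_R(v)$. Then $v'$ stays in $W^J$ (if some $j\in J$ were a left descent of $v'$, then $\ell(jv')=\ell(v)-2$, while $\ell(jv')\geq\ell(jv)-1=\ell(v)$ because $v\in W^J$), so $uv'$ is again a parabolic decomposition with respect to $J$, hence reduced of length $\ell(w)-1$ and below $w$; and the uniqueness-of-parabolic-decomposition argument you sketch then really does separate $uv'$ from the lifted elements $u'v$, since their $W_J$-components $u$ and $u'$ have different lengths.

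A secondary point: your justification that $u_tv$ is reduced (``should follow from $\ell(u_tv)\leq\ell(u_t)+\ell(v)$ together with $u_tv\leq w$'') is circular—those two facts do not force $\ell(u_tv)=\ell(w)-1$. The correct reason, which the paper invokes with the phrase ``since $v\in W^J$,'' is that $u_t\leq u$ lies in $W_J$, so $u_tv$ is itself a parabolic decomposition with respect to $J$ and is therefore automatically reduced; that $u_tv\leq w$ then follows from the subword property. With the right-descent fix and this justification, your argument becomes precisely the paper's proof.
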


\begin{proof}
We proceed by induction on $\ell(w)$. The proposition is true if $\ell(w) = 1$, so suppose $\ell(w)
\geq 2$.  Let $w=uv$ be a Grassmannian factorization with respect to $J.$  By induction,
$|\pred(u)| \geq |\supp(u)|$.


If $u' \in \pred(u)$, then $u' v \in \pred(w)$, since $v \in W^J$. Now $v$ is not the identity, so
we can write $v' = v s\in W^J$ with $s\in S$ and $\ell(v')=\ell(v)-1.$  Consequently $u v' \in
\pred(w)$. Moreover, $uv'\neq u'v$ for any $u'\in\pred(u)$ since they are both parabolic
decompositions with respect to $J$ and $u\neq u'.$    Hence
\begin{equation}\label{E:predgeqsupp}|\pred(w)|\geq
|\pred(u)|+1\geq |\supp(u)|+1=|\supp(w)|.\end{equation} This completes the proof.
\end{proof}
We remark that, for crystallographic Coxeter groups, Bjorner and Ekedahl prove a much
stronger version of Lemma \ref{L:predgeqsupp} concerning all the coefficients of $P_w(q)$ \cite[Theorem A]{BE09}.


We can continue to decompose any Grassmannian factorization $w=uv$ by taking a Grassmannian factorization
of $u$. We say that
$$w=v_1v_2\cdots v_{|\supp(w)|}$$ is a \emph{complete Grassmannian factorization} of $w$ if for
every $i<|\supp(w)|$, we have that $(v_1\cdots v_i)(v_{i+1})$ is a Grassmannian factorization.  Observe that if each $(v_1\cdots v_i)(v_{i+1})$ is also a BP-decomposition, then by Lemma \ref{L:BPassoc}, we have $(v_1\cdots v_i)(v_{i+1}\cdots v_k)$ is BP decomposition for any $i<k\leq |\supp(w)|.$


By definition, $w$ is $2$-palindromic if and only if $|\pred(w)|=|\supp(w)|.$  The following
lemma gives an inductive characterization of the $2$-palindromic property.

\begin{lemma}\label{L:2palcondition}
Suppose that $w=uv$ is a Grassmannian factorization.  Then $w$ is $2$-palindromic if and only if $u$
is $2$-palindromic and $|u\cdot\pred(v)\cap\pred(w)|=1.$
\end{lemma}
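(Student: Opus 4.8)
The plan is to reduce everything to a count of predecessors and then decompose $\pred(w)$ cleanly. Recall that $w$ is $2$-palindromic if and only if $|\pred(w)|=|\supp(w)|$, and that a Grassmannian factorization gives $|\supp(w)|=|\supp(u)|+1$. So it suffices to establish a disjoint decomposition $\pred(w)=A\sqcup B$, where $A:=\{u'v : u'\in\pred(u)\}$ and $B:=(u\cdot\pred(v))\cap\pred(w)$. Each $u'v\in A$ is reduced of length $\ell(w)-1$ (since $u'\in W_J$ and $v\in W^J$) and lies below $w$, so $A\subseteq\pred(w)$; as $u'\mapsto u'v$ is injective, $|A|=|\pred(u)|$. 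The elements of $B$ are predecessors by definition. Granting the decomposition, I will have $|\pred(w)|=|\pred(u)|+|B|$, and the equivalence will drop out of Lemma~\ref{L:predgeqsupp}.

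The core of the argument is the set identity $\pred(w)=A\cup B$, which I would prove by fixing a reduced word for $w$ adapted to the factorization. Because $v$ is Grassmannian, its unique left descent $s_0$ can be placed at the front: choosing reduced words $u=s_{i_1}\cdots s_{i_k}$ (all letters in $J$) and $v=s_0 s_{j_2}\cdots s_{j_m}$ yields the reduced word $w=s_{i_1}\cdots s_{i_k}\,s_0 s_{j_2}\cdots s_{j_m}$. By the subword property, every $x\in\pred(w)$ arises as a reduced single–letter deletion of this word. Deleting a letter in the $u$-block gives $u'v$ with $u'\in\pred(u)$, hence $x\in A$; deleting a letter in the $v$-block gives $uv'$ with $v'\in\pred(v)$ and $uv'$ reduced, hence $x\in B$. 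This proves $\pred(w)\subseteq A\cup B$, and the reverse inclusion is immediate. For disjointness, suppose $u'v=uv'\in A\cap B$ with $u'\in\pred(u)$ and $v'\in\pred(v)$. Then $v'=(u^{-1}u')v$ with $u^{-1}u'\in W_J\setminus\{e\}$ and $v\in W^J$, so $\ell(v')=\ell(u^{-1}u')+\ell(v)\ge \ell(v)+1$, contradicting $\ell(v')=\ell(v)-1$. Thus $A\cap B=\emptyset$.

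Finally I would assemble the count. From $\pred(w)=A\sqcup B$ I get $|\pred(w)|=|\pred(u)|+|B|$. Lemma~\ref{L:predgeqsupp} gives $|\pred(u)|\ge|\supp(u)|$, and $|B|\ge 1$: choosing any $s\in D_R(v)$, the element $vs$ lies in $\pred(v)\cap W^J$, so $u(vs)$ is reduced of length $\ell(w)-1$ and lies in $B$. Hence $|\pred(w)|\ge|\supp(u)|+1=|\supp(w)|$, with equality precisely when $|\pred(u)|=|\supp(u)|$ and $|B|=1$, i.e.\ exactly when $u$ is $2$-palindromic and $|u\cdot\pred(v)\cap\pred(w)|=1$. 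Since $w$ is $2$-palindromic iff this equality holds, the lemma follows. The main obstacle is the set identity $\pred(w)=A\cup B$, namely ruling out predecessors whose reduced expressions genuinely mix the two blocks; this is exactly where the Grassmannian hypothesis enters, letting me push the unique left descent $s_0$ of $v$ to the front and apply the subword property.
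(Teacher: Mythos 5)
Your proof is correct and takes essentially the same route as the paper: both reduce $2$-palindromicity to the count $|\pred(w)|=|\supp(w)|$ and analyze when equality holds in the chain $|\pred(w)|\ge|\pred(u)|+1\ge|\supp(u)|+1=|\supp(w)|$ from Lemma \ref{L:predgeqsupp}, and your decomposition $\pred(w)=A\sqcup B$ via the subword property simply makes explicit what the paper's one-line proof leaves implicit. (One small quibble: that deletion argument works for any parabolic decomposition, so the Grassmannian hypothesis is not really what makes the set identity work--it is needed only to get $|\supp(w)|=|\supp(u)|+1$ and hence to match the predecessor count against the support count.)
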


\begin{proof}
Equality holds in Equation \eqref{E:predgeqsupp} if and only if $|\pred(u)|=|\supp(u)|$ and $u\cdot\pred(v)\cap\pred(w)=\{uvs\}$ where $s\in D_R(v).$  \end{proof}


\section{The factorization theorem}\label{S:mainresults}

The main technical theorem of this paper is the following:

\begin{thm}\label{T:mainchar}
Suppose that $W$ avoids all triangle groups in $\Tria$.  Let $w\in W$ be $2$-palindromic and fix a
Grassmannian factorization $w=uv$ with respect to $J\subseteq S.$  Then $w=uv$ is a
BP-decomposition with respect to $J$ such that $|\supp(v)|\leq 3.$

\smallskip

Moreover, if $|\supp(v)|= 3$ and $\supp(v)=\{r,s,t\},$ then one of the following is true:

\smallskip

\begin{enumerate}
\item $v=trv'$ with $v'=\underbrace{stst\ldots}_{m_{st}-1}$ where $\supp(v)$ generates the triangle group $(3,m_{rs},m_{st})$ with $m_{rt}=3$ and $3 \leq m_{st}<\infty$, $3 \leq m_{rs} \leq \infty$.

\smallskip

\item $v=rstrv'$ with $v'=\underbrace{stst\ldots}_{m_{st}-1}$ where $\supp(v)$ generates the triangle group $(3,3,m_{st})$ with $3 < m_{st}<\infty.$

\smallskip

\item $v=strstr\cdots$ is a spiral word\footnote{A spiral word is word which cycles through a set of generators in a fixed order.} of even length where $\supp(v)$ generates the triangle group $(3,3,3).$
\end{enumerate}\end{thm}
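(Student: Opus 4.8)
The plan is to analyze the Grassmannian factorization $w=uv$ where $v\in W^J$ is a Grassmannian element (so $|D_L(v)|=1$), using the $\Tria$-avoidance hypothesis together with the inductive $2$-palindromic characterization from Lemma \ref{L:2palcondition}. First I would establish that $w=uv$ is automatically a BP-decomposition: since $w$ is $2$-palindromic and $w=uv$ is Grassmannian, Lemma \ref{L:2palcondition} forces $u$ to be $2$-palindromic and $|u\cdot\pred(v)\cap\pred(w)|=1$. I would use this cardinality-one condition, together with Lemma \ref{L:BPlemma}, to show $\supp(v)\cap J\subseteq D_R(u)$, which gives the BP-decomposition. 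The key point is that if some $x\in\supp(v)\cap J$ failed to lie in $D_R(u)$, then $ux$ would produce an extra element of $[e,w]\cap W_J$, violating maximality, and I would quantify how this inflates $|\pred(w)|$ beyond $|\supp(w)|$, contradicting $2$-palindromicity.

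Next I would bound $|\supp(v)|$. Since $v$ is Grassmannian with $|D_L(v)|=1$, write its unique left descent as the generator beginning $v$; the support $\supp(v)$ consists of this generator together with generators in $J$. The main tool is to count predecessors: each element $uvs$ for $s\in D_R(v)$ contributes to $\pred(w)$, and each generator $x\in\supp(v)\setminus D_L(v)$ interacting with the Grassmannian structure tends to create additional predecessors. The $2$-palindromic condition $|u\cdot\pred(v)\cap\pred(w)|=1$ severely constrains how large $\supp(v)$ can be. I expect that $|\supp(v)|\leq 3$ follows because the parabolic subgroup generated by $\supp(v)$ must be small enough that a Grassmannian element there has a single right descent contributing to $\pred(w)$; any rank-$4$ or larger support, or a rank-$3$ support containing a forbidden commuting relation, would force at least two distinct predecessors of the form $uvs$, contradicting the cardinality-one condition. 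This is precisely where $\Tria$-avoidance enters: the forbidden triangles $(2,b,c)$ with $b,c\geq 3$, $b<\infty$ are exactly those rank-$3$ groups whose Grassmannian elements violate the required predecessor count.

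For the classification when $|\supp(v)|=3$ with $\supp(v)=\{r,s,t\}$, I would work inside the rank-$3$ parabolic subgroup $W_{\{r,s,t\}}$, which by $\Tria$-avoidance must be one of the allowed triangle types. Since no $(2,b,c)$ triangle with $b,c\geq 3$, $b<\infty$ is permitted, the only triangles with a commuting relation $m=2$ that survive have the other entries forcing specific shapes; the bulk of the allowed cases have $m_{rs},m_{rt},m_{st}\geq 3$. I would then enumerate Grassmannian elements $v$ (those with a single left descent) inside each allowed triangle group that satisfy $|u\cdot\pred(v)\cap\pred(w)|=1$, i.e. that have essentially a unique way to be a predecessor. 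The three listed cases correspond to the triangle types $(3,m_{rs},m_{st})$ with $m_{rt}=3$, the type $(3,3,m_{st})$ with $m_{st}$ finite, and the type $(3,3,3)$ yielding spiral words; in each I would exhibit the explicit reduced word for $v$ by tracking which generators can appear and in what order so that $v$ remains minimal-length in $W^J$ with a single left descent.

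The hardest part will be the classification step: verifying that the explicit words listed in cases (1)--(3) are exactly the Grassmannian elements $v$ compatible with $2$-palindromicity, and that no other reduced words arise. This requires a careful case analysis within each allowed rank-$3$ triangle group, computing $\pred(v)$ and checking the single-predecessor condition $|u\cdot\pred(v)\cap\pred(w)|=1$ against the Bruhat order in that parabolic subgroup. The delicate point is distinguishing which triangle types admit such Grassmannian elements at all and pinning down the precise combinatorial form (e.g.\ the alternating tail $\underbrace{stst\ldots}_{m_{st}-1}$ and the prefixes $tr$ or $rstr$), which I expect to require explicit Bruhat-order computations in dihedral and small triangle groups rather than a single uniform argument.
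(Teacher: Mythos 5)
Your high-level skeleton (extract the cardinality-one condition $|u\cdot\pred(v)\cap\pred(w)|=1$ from Lemma \ref{L:2palcondition}, bound $|\supp(v)|$, classify $v$ inside rank-$3$ parabolics, and invoke Lemma \ref{L:BPlemma}) matches the paper, but two of your three steps rest on mechanisms that do not work as described. First, the ordering: you prove the BP-decomposition \emph{first}, arguing that if $x\in\supp(v)\cap J\setminus D_R(u)$ then ``$ux$ would produce an extra element of $[e,w]\cap W_J$, violating maximality.'' This is circular: maximality of $u$ in $[e,w]\cap W_J$ \emph{is} the BP property you are trying to establish, not a hypothesis. To contradict $2$-palindromicity you must exhibit an actual extra element of $\pred(w)$, i.e.\ some predecessor of $v$ of the form $s_0v''$ with $v''\in W^J$ and $s_0\in\supp(v)\cap J$, so that $us_0v''$ is reduced. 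Producing such a $v''$ is precisely the paper's Lemma \ref{L:BPlemma2}, and its construction uses the explicit spiral/braid structure of $v$ obtained in the classification — which is why the paper proves the BP statement \emph{last}, not first. Your sketch gives no way to manufacture this extra predecessor without already knowing what $v$ looks like.

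Second, your mechanism for $|\supp(v)|\leq 3$ is false as stated: a rank-$4$ (or larger) support does \emph{not} force ``at least two distinct predecessors of the form $uvs$,'' i.e.\ it does not force $|D_R(v)|\geq 2$. For example, with $m_{s_is_{i+1}}=\infty$ for consecutive generators and $m_{s_is_j}=2$ otherwise (a $\Tria$-avoiding configuration), $v=s_1s_2s_3s_4$ is Grassmannian with a single right descent, so your proposed contradiction never materializes. The paper's actual argument is different in kind: using the descent-set lemmas for $\Tria$-avoiding groups (Lemmas \ref{L:finpara} and \ref{L:stdescent} — the place where $\Tria$-avoidance genuinely enters, and which your proposal never supplies or replaces), it first shows via Lemma \ref{L:prefix} that $|\supp(v)|\geq 4$ forces all relations in $\supp(v)$ to be commuting or infinite, and then derives the contradiction from the right descents of the component $u_0$ of $u$ lying in $W_{\supp(v)}$ — the offending extra predecessor comes from extending $u$, not from a second descent of $v$. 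Relatedly, the classification step should be run against the condition $|W^J\cap\pred(v)|=1$, which is intrinsic to $v$ and $J$, rather than the $u$-dependent condition you carry; and the rank-$3$ groups involved are infinite (e.g.\ $m_{rs}=\infty$ is allowed in case (1)), so the ``explicit Bruhat computations in small triangle groups'' you defer to would themselves require the structural induction (maximal spiral prefix analysis of Lemmas \ref{L:vchar1}--\ref{L:vchar3}) that constitutes the real content of the proof.
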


Theorem \ref{T:mainchar} says that if $W$ avoids triangle groups in $\Tria$, then the Poincar\'{e} polynomial $P_w(q)$ of a 2-palindromic element $w\in W$ factors along any Grassmannian factorization of $w=uv$.  Moreover, the possibilities for the factor $P^J_v(q)$ is limited by the fact that $|\supp(v)|\leq 3.$  Note that parts (1) and (3) of the theorem overlap when $m_{rs}=m_{st}=3.$  The proof of this theorem is the focus of Section
\ref{S:Proof}.  The remainder of this section is devoted to consequences of Theorem \ref{T:mainchar}.


Fix a $2$-palindromic element $w\in W$ and a Grassmannian factorization $w=uv$ with respect to
$J\subseteq S.$  Theorem \ref{T:mainchar} can be used, together with Lemma \ref{L:2palcondition}, to completely determine the polynomial $P_w(q).$  By Theorem \ref{T:mainchar} and
Proposition \ref{P:para}, we have that $$P_w(q) = P_u(q) \cdot P_v^J(q),$$  so it suffices to characterize all possible polynomials $P_v^J(q).$  For any integer
$k\geq 1$ define the $q$-integer
\begin{equation}\label{E:qinteger}[k]_q:=1+q\cdots +q^{k-1}.\end{equation}  If $|\supp(v)|\leq 2,$ then any $v'\leq v$ where $v'\in W^J$ is given by a prefix of the unique reduced word of $v.$  This implies
\begin{equation}\label{E:rank2poly}P_v^J(q)=[\ell(v)+1]_q.\end{equation}

If $|\supp(v)|=3$, it suffices to compute $P_v^J(q)$ in all the cases of Theorem \ref{T:mainchar}.  We have the following lemma.

\begin{lemma}\label{L:rank3v}Suppose we have $w=uv$ as in Theorem \ref{T:mainchar} with $|\supp(v)|=3.$  Then the following are true:  \begin{enumerate}
\item If $v$ satisfies the conditions in Theorem \ref{T:mainchar} part (1), then
$$P_v^J(q)=[\ell(v)+1]_q+q^2[\ell(v)-3]_q.$$
\item If $v$ satisfies the conditions in Theorem \ref{T:mainchar} part (2), then
$$P_v^J(q)=[\ell(v)+1]_q+q^2[\ell(v)-3]_q+q^4[\ell(v)-6]_q.$$
\item If $v$ satisfies the conditions in Theorem \ref{T:mainchar} part (3) with $\displaystyle k=\left\lfloor\frac{\ell(v)}{4}\right\rfloor$, then
$$P_v^J(q)=\sum_{i=0}^k q^{2i}[\ell(v)-4i+1]_q.$$
\end{enumerate}\end{lemma}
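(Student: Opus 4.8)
The plan is to compute $P_v^J(q)$ directly in each of the three cases by enumerating the elements $v' \in [e,v] \cap W^J$ and recording their lengths. Since $J = \supp(u)$ and $\supp(v) = \{r,s,t\}$, the index $v'$ ranges over minimal coset representatives in $W^J$ that lie below $v$ in Bruhat order, and the explicit word forms given in Theorem~\ref{T:mainchar} make this enumeration tractable: in each case $v$ is written as an explicit reduced word, so the elements below $v$ correspond (via the subword property for Bruhat order) to certain subwords, and we must identify which of these lie in $W^J$.

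First I would handle case (3), the spiral word in the $(3,3,3)$ triangle group $\tilde{A}_2$, since it is the cleanest and the pattern there should illuminate the others. Writing $v = strstr\cdots$ of even length $\ell(v)$, I would argue that the elements of $[e,v] \cap W^J$ organize into ``layers'': for each $i$ with $0 \le i \le k = \lfloor \ell(v)/4 \rfloor$, there is a family of representatives whose lengths run through an arithmetic progression contributing the factor $q^{2i}[\ell(v) - 4i + 1]_q$. The key computation is to check, for the spiral word, which prefixes and which more complicated subwords remain minimal-length coset representatives modulo $W_J$; here I expect to use the left descent condition $|D_L(v')| = 1$ together with the braid relations of $\tilde{A}_2$ to rule out or include candidate subwords. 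Cases (1) and (2) should then follow by the same method applied to the specific words $v = trv'$ and $v = rstrv'$ with $v' = \underbrace{stst\cdots}_{m_{st}-1}$, producing two and three layers respectively, matching the two-term and three-term formulas.

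The main obstacle I anticipate is correctly counting the subwords of $v$ that lie in $W^J$, rather than merely the subwords of $v$ in $W$. The subword characterization of Bruhat order gives all $x \le v$, but the relative Poincar\'e polynomial only sees those $x \in W^J$, i.e., those $x$ with $\supp(x) \cap J \subseteq$ the appropriate descent condition; disentangling which subwords survive this constraint, especially where braid moves in the rank-$3$ parabolic create coincidences among words, is where the delicate casework lives. A clean way to organize this would be to set up a bijection between $[e,v] \cap W^J$ and a combinatorial model — for instance, lattice paths or a staircase region whose cells are graded by length — so that the generating function by length visibly collapses to the stated sum of shifted $q$-integers. I would verify each formula at the extreme values of $\ell(v)$ (the smallest admissible length in each case, and the transition points where a new layer $q^{2i}[\ell(v)-4i+1]_q$ appears or vanishes) as a consistency check, and confirm that the total degree is $\ell(v)$ and the constant term is $1$, as required for $v' \in W^J$.
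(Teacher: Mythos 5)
Your method for parts (1) and (2) is essentially the paper's: there too, $P_v^J(q)$ is obtained by elementary counting of the sets $\{v'\in W^J\cap[e,v]\ :\ \ell(v')=i\}$, with the count organized into explicit families of reduced subwords whose lengths form the shifted $q$-integers — for part (1) the prefixes of $v=tr\underbrace{stst\cdots}_{m_{st}-1}$ (giving $[\ell(v)+1]_q$) together with the words $tsts\cdots$ of length less than $m_{st}$ (giving $q^2[\ell(v)-3]_q$), and for part (2) the three families $r(tsts\cdots)$, $r(stst\cdots)$ and $rstr(stst\cdots)$. Where you genuinely diverge is part (3): the paper does not compute the spiral-word case at all, but cites \cite{Mi86}, where this polynomial appears as the Poincar\'e polynomial of a Schubert variety in the affine Grassmannian of type $A$ (loops on $\mathrm{SU}(3)$); you propose instead to run the same layer-by-layer enumeration directly. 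Your route buys a self-contained proof, at the cost of carrying out by hand the most delicate of the three enumerations; the paper's citation buys brevity and offloads exactly that work. Two remarks on executing your plan. First, the braid-move coincidences you worry about are milder than you fear: in all three cases $v$ has a \emph{unique} reduced expression (there are no commuting pairs in $\{r,s,t\}$, and no contiguous substring of $v$ is a full braid), so Bruhat order below $v$ is governed by subwords of one fixed word, though distinct subwords can still represent the same element and must be identified. Second, your membership criterion for $W^J$ should be $D_L(v')\cap J=\emptyset$, i.e. $D_L(v')\subseteq\{s_0\}$ where $s_0$ is the unique generator of $\supp(v)$ not lying in $J$, rather than ``$|D_L(v')|=1$''; this distinction is precisely what truncates the second family in part (1) at length $m_{st}-1$ — the dihedral longest element $tst\cdots$ of length $m_{st}$ lies below $v$ but acquires the generator $s\in J$ as a left descent and so drops out of $W^J$, which is why that contribution is $q^2[\ell(v)-3]_q$ and not $q^2[\ell(v)-2]_q$.
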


\begin{proof}
Part (3) is proved in \cite[Proposition 2.4]{Mi86} where certain Poincar\'{e} polynomials of Schubert varieties in
the affine Grassmannian of type $A$ are calculated.  Parts (1) and (2) can be deduced from elementary
counting arguments of the sets $$\{v'\in W^J\cap[e,v]\ |\ \ell(v')=i\}.$$  In particular, for part (1), there are two $q$-integer contributions from reduced subwords of the form
$$tr\underbrace{stst\ldots}_k\quad\text{and}\quad \underbrace{tstst\ldots}_k.$$  For part (2) there are three $q$-integer contributions from reduced subwords of the form
$$r\underbrace{tsts\ldots}_k\quad\text{and}\quad r\underbrace{stst\ldots}_k\quad\text{and}\quad rstr\underbrace{stst\ldots}_k.$$\end{proof}

The polynomials in parts (1) and (3) of the lemma are palindromic, while the
polynomial is part (2) is 3-palindromic but not 4-palindromic.  We now prove the
theorem stated in the introduction.

\subsection*{Proof of Theorem \ref{T:main}.}
Suppose that $W$ avoids all triangles in $\Tria.$  Let $w=v_1v_2\cdots v_{|\supp(w)|}\in W$ be a
complete Grassmannian factorization.  Then by Theorem \ref{T:mainchar} and Proposition
\ref{P:para}, we have that
$$P_w(q)=\prod_{i=1}^{|\supp(w)|} P_{v_i}^{J_i}(q).$$ where
$J_i:=\supp(v_1)\cup\cdots\cup\supp(v_{i-1})$ and $J_1:=\emptyset.$  Moreover, the factors
$P_{v_i}^{J_i}(q)$ are given by either Equation \eqref{E:rank2poly} or by parts (1)--(3) of Lemma
\ref{L:rank3v}.  Now $P_w(q)$ is 4-palindromic if the polynomial in Lemma \ref{L:rank3v} part (2)
does not appear as one of the factors $P_{v_i}^{J_i}(q).$  Since all other possible choices for
$P_{v_i}^{J_i}(q)$ are palindromic, we have that $P_w(q)$ is 4-palindromic if and only
if it is palindromic.  This proves part (1) of Theorem \ref{T:main}.


If $W$ also avoids the triangles of the form $(3,3,c),$ then Lemma \ref{L:rank3v} part (2) is never an option
for $P_{v_i}^{J_i}(q)$.  Hence every $2$-palindromic $w\in W$ is palindromic.  This completes the
proof.$\hfill\Box$

\subsection{Examples}
Consider the Coxeter group $W$ with $S=\{s_1,s_2,s_3,s_4\}$ defined by the Dynkin diagram in Figure \ref{F:dynkin1}.

\begin{figure}[h!]
\begin{center}
  \begin{tikzpicture}[scale=.5]
    \draw[thick] (-2 cm ,0) -- (2 cm ,0);\draw (-1.5 cm, 2 cm) node {4};
    \draw[thick] (-2 cm ,0) -- (0,3.46 cm);
    \draw[thick] (-2 cm ,0) -- (0,1.2 cm);
    \draw[thick] (2 cm ,0) -- (0,3.46 cm);
    \draw[thick] (2 cm ,0) -- (0,1.2 cm);
    \draw[thick] (0,3.46 cm) -- (0,1.2 cm);
    \draw[draw=white, fill=white] (-2 cm ,0) circle (.5 cm) node {$s_2$};
    \draw[draw=white, fill=white] (2 cm ,0) circle (.5 cm) node {$s_3$};
    \draw[draw=white, fill=white] (0,1.2 cm) circle (.5 cm) node {$s_1$};
    \draw[draw=white, fill=white] (0,3.46 cm) circle (.5 cm) node {$s_4$};
  \end{tikzpicture}
\end{center}
\caption{}\label{F:dynkin1}
\end{figure}

Unlabeled edges are assumed to have label $m_{st}=3$ and if there is no edge between $s$ and $t,$
then $m_{st}=2.$  Clearly, $W$ avoids all triangle groups in $\Tria$ and hence we can apply Theorem
\ref{T:mainchar} to compute Poincar\'{e} polynomials.
\begin{example}Let $w=s_1s_2s_1s_3s_2s_1s_3s_2s_1s_4.$  Then $w$ is $2$-palindromic with $|\supp(w)|=|\pred(w)|=4.$
The following is a complete Grassmannian factorization:
$$w=\underbrace{(s_1)}_{v_1}\underbrace{(s_2s_1)}_{v_2}\underbrace{(s_3s_2s_1s_3s_2s_1)}_{v_3}\underbrace{(s_4)}_{v_4}.$$
The corresponding Poincar\'{e} polynomial factorization is
\begin{align*}P_w(q)&=[2]_q[3]_q([7]_q+q^2[3]_q)[2]_q\\ &=(1+q)(1+q+q^2)(1+q+2q^2+2q^3+2q^4+q^5+q^6)(1+q),\end{align*}
so $P_w(q)$ is palindromic.
\end{example}

\begin{example}Let $w=s_2s_4s_2s_4s_1s_2s_4s_1s_2s_4s_2.$  Then $w$ is $2$-palindromic with $|\supp(w)|=|\pred(w)|=3.$
A complete Grassmannian factorization of $w$ is
$$w=\underbrace{(s_2)}_{v_1}\underbrace{(s_4s_2s_4)}_{v_2}\underbrace{(s_1s_2s_4s_1s_2s_4s_2)}_{v_3}.$$
The corresponding Poincar\'{e} polynomial factorization is
\begin{align*}P_w(q)&=[2]_q[4]_q([8]_q+q^2[4]_q+q^4[1]_q)\\
&=(1+q)(1+q+q^2+q^3)(1+q+2q^2+2q^3+3q^4+2q^5+q^6+q^7)\\
\end{align*}
Note that $\{s_1,s_2,s_4\}$ generates the triangle group $(3,3,4).$  Since $v_3=s_1s_2s_4s_1s_2s_4s_2,$ we have that $w$ is 3-palindromic but not 4-palindromic.
\end{example}

An example of a $\Tria$-avoiding Coxeter group $W$ with commuting relations is given by the Dynkin
diagram in Figure \ref{F:dynkin2} below where $p\geq 3.$

\begin{figure}[h!]
\begin{center}
  \begin{tikzpicture}[scale=.5]
    \draw[thick] (-0 cm ,0) -- (-2.5 cm ,1.73 cm);\draw (3.75 cm, 2.3 cm) node{$\infty$}; 
    \draw[thick] (-0 cm ,0) -- (2.5 cm ,1.73 cm);\draw (1.6 cm, 3 cm) node{$\infty$};\draw (1.6 cm, .46 cm) node{$\infty$};
    \draw[thick] (-0 cm ,0) -- (0,3.46 cm);\draw (-1.6 cm, 3 cm) node{$\infty$};\draw (-1.6 cm, .46 cm) node{$\infty$};
    \draw[thick] (0,3.46 cm) -- (-2.5 cm ,1.73 cm);\draw (.6 cm, 1.73 cm) node{$p$};
    \draw[thick] (0,3.46 cm) -- (2.5 cm ,1.73 cm);
    \draw[thick] (5 cm,1.73 cm) -- (2.5 cm ,1.73 cm);
    \draw[draw=white, fill=white] (0 cm ,0) circle (.5 cm) node {$s_5$};
    \draw[draw=white, fill=white] (-2.5 cm ,1.73 cm) circle (.5 cm) node {$s_1$};
    \draw[draw=white, fill=white] (2.5 cm,1.73 cm) circle (.5 cm) node {$s_2$};
    \draw[draw=white, fill=white] (0,3.46 cm) circle (.5 cm) node {$s_4$};
    \draw[draw=white, fill=white] (5 cm,1.73 cm) circle (.5 cm) node {$s_3$};
  \end{tikzpicture}
\end{center}
\caption{}\label{F:dynkin2}
\end{figure}

Observe that $W$ also avoids all triangle groups of the form $(3,3,c).$ Hence every $2$-palindromic element is
palindromic by Theorem \ref{T:main}.  Moreover, every palindromic polynomial factors into a product
of $q$-integers.  We also remark that $W$ is indecomposable with respect to products and free products
of Coxeter groups.

\subsection{Enumeration and description of palindromic elements}

Theorem \ref{T:mainchar} gives a description of the set of palindromic (resp. $2$-palindromic) elements of any $\Tria$-avoiding Coxeter group.
Specifically, the palindromic (resp. $2$-palindromic) elements are those with a certain Grassmannian factorization.  In this section we provide some applications of this idea.
We start by proving a corollary of Theorem \ref{T:mainchar} on the finiteness of the number of palindromic elements for all $\Tria$-avoiding Coxeter groups.

\begin{cor}\label{C:finite}Let $(W,S)$ be a Coxeter group that avoids all triangle groups in $\Tria.$  Then
$W$ has a finite number of palindromic elements if and only if $m_{st}<\infty$ for all $s,t\in S$
and $W$ avoids the triangle group (3,3,3).\end{cor}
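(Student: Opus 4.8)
The plan is to prove Corollary \ref{C:finite} by characterizing exactly when an infinite family of palindromic elements can be built up via complete Grassmannian factorizations, using the factor-by-factor description supplied by Theorem \ref{T:mainchar} and Lemma \ref{L:rank3v}. First I would establish the \emph{easy direction}: if some $m_{st} = \infty$, then the dihedral parabolic subgroup $W_{\{s,t\}}$ is infinite, and every one of its (infinitely many) elements $\underbrace{stst\cdots}$ is palindromic (its Poincar\'{e} polynomial is a single $q$-integer $[\ell+1]_q$, which is palindromic); hence $W$ has infinitely many palindromic elements. Similarly, if $W$ contains the triangle group $(3,3,3)$, i.e. $W_{\{r,s,t\}} = \tilde{A}_2$, then case (3) of Theorem \ref{T:mainchar} together with part (3) of Lemma \ref{L:rank3v} produces an infinite family of palindromic spiral words $v = strstr\cdots$ of even length, each with palindromic Poincar\'{e} polynomial, so again there are infinitely many palindromic elements. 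This shows that if $W$ has finitely many palindromic elements, then necessarily $m_{st} < \infty$ for all $s,t$ and $W$ avoids $(3,3,3)$.

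For the \emph{hard direction} I would assume $m_{st} < \infty$ for all $s,t \in S$ and that $W$ avoids $(3,3,3)$, and prove finiteness. The key idea is to bound the length of each factor $v_i$ appearing in a complete Grassmannian factorization $w = v_1 v_2 \cdots v_{|\supp(w)|}$ of a palindromic $w$. By Theorem \ref{T:mainchar}, each factor $v_i$ has $|\supp(v_i)| \leq 3$, and falls into one of the listed types. When $|\supp(v_i)| \leq 2$, the factor $v_i$ lives in a dihedral parabolic of order bounded by $2\max_{s,t} m_{st} < \infty$, so $\ell(v_i)$ is bounded. When $|\supp(v_i)| = 3$, cases (1) and (2) of Theorem \ref{T:mainchar} give $v$ as an explicit word of length controlled by $m_{st} < \infty$ (namely $\ell(v) \leq m_{st} + c$ for a small constant $c$), while case (3) is excluded by the hypothesis that $W$ avoids $(3,3,3)$. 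Since all the $m_{st}$ are finite and $S$ is finite, there is a uniform bound $L$ on $\ell(v_i)$ for every factor. Because a complete Grassmannian factorization has at most $|S|$ factors, we obtain $\ell(w) \leq L \cdot |S|$, a bound independent of $w$. As there are only finitely many elements of $W$ below any fixed length, there are only finitely many palindromic $w$.

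The main obstacle I anticipate is making the length bounds in the $|\supp(v_i)| = 3$ cases genuinely uniform and airtight. One must verify that in case (1) the word $v = trv'$ with $v' = \underbrace{stst\cdots}_{m_{st}-1}$ indeed has length $m_{st} + 1 < \infty$ (using $m_{st} < \infty$, which is part of the hypothesis of that case), and that in case (2) the word $v = rstrv'$ likewise has bounded length; here one must be careful that the relevant bond strength is the one guaranteed finite. The subtlety is that a priori $m_{rs}$ could equal $\infty$ in case (1), but then the factor $v$ itself still has bounded length because only $m_{st}$ controls $\ell(v')$ — so the bound on $\ell(v_i)$ holds regardless, and I would emphasize that the finiteness of all $m_{st}$ is used to rule out the $\ell(v_i) \to \infty$ behavior arising from dihedral factors (the $|\supp(v_i)| \leq 2$ case), not from every individual triangular factor. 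Assembling these observations, the crucial combinatorial point is simply that a bounded number of factors, each of bounded length, yields a bounded total length; once stated carefully, the finiteness conclusion is immediate.
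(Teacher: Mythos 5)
Your proof is correct and takes essentially the same approach as the paper: the same easy direction (an infinite dihedral parabolic $W_{\{s,t\}}$ when $m_{st}=\infty$, and the infinite family of palindromic spiral words in $(3,3,3)$ coming from Theorem \ref{T:mainchar}(3)), and the same hard direction (Theorem \ref{T:mainchar} gives a uniform bound on the length of each factor in a complete Grassmannian factorization, namely at most $m_0+3$ with $m_0=\max_{s,t}m_{st}<\infty$, so $\ell(w)\leq |S|(m_0+3)$ and finiteness follows). The paper's proof is simply a terser version of yours, absorbing your careful case analysis of the rank-$3$ factors into the single bound $m_0+3$.
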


\begin{proof}Theorem \ref{T:mainchar} part (3) implies that the triangle group (3,3,3) contains
an infinite number of palindromic elements. Also, if $m_{rs}=\infty,$ then $W_{\{r,s\}}$ is infinite and every element is palindromic.


Let $m_0$ denote the largest value of $m_{st}$ for $s,t\in S.$  Suppose that $W$ avoids $(3,3,3)$
and $m_0<\infty.$ Let $w\in W$ be palindromic with complete Grassmannian factorization
$w=v_1\cdots w_{|\supp(w)|}.$  By Theorem \ref{T:mainchar}, we have that each factor $v_i$ has
length at most $m_0+3,$ so $$\ell(w)<|\supp(w)|(m_0+3)\leq |S|(m_0+3)$$ and hence the
number of palindromic elements in $W$ is finite.\end{proof}
Corollary \ref{C:finite} also holds if palindromic is replaced by $2$-palindromic.


Note that the Grassmannian factorization of an element provided by Theorem \ref{T:mainchar} is not
necessarily unique.  When $m_{st}\geq 3$ for all $s\neq t,$ we give a modified factorization which
does not have this problem.  To state the modified factorization we need the following definition.

\begin{definition}We say a reduced factorization $w=u_1u_2\cdots u_d$ is separable if
$\supp(u_i)\cap\supp(u_j)=\emptyset$ for all $i\neq j.$ If no such non-trivial factorization
exists, then we say that $w$ is inseparable.\end{definition}

Given any complete Grassmannian factorization of a palindromic element $w=v_1\cdots
v_{|\supp(w)|}$, there is a simple method for constructing a separable factorization.  Let
$(i_1,\ldots, i_d)$ denote the subsequence of integers for which $\ell(v_{i_j})=1.$ Then
$w=u_1\cdots u_d$ is a separable factorization where
$$u_j:=v_{i_j}v_{i_j+1}\cdots v_{i_{j+1}-1}$$
and $i_{d+1}:=|\supp(w)|+1.$  We remark that $\ell(v_1)=1$ and hence the sequence $(i_1,\ldots,
i_d)$ is nonempty. Furthermore, each factor $u_j$ is inseparable.  For example, let $W$ be defined
by the Dynkin diagram in Figure \ref{F:dynkin1} and $w=s_4s_2s_4s_2s_3s_1s_3.$ Then $w=u_1u_2$
given by
\begin{equation}\label{E:insep_example}
w=\overbrace{\underbrace{s_4}_{v_1}\underbrace{s_2s_4s_2}_{v_2}}^{u_1}\overbrace{\underbrace{s_3}_{v_3}\underbrace{s_1s_3}_{v_4}}^{u_2}\end{equation}
is a separable factorization.   The following corollary follows from Theorem \ref{T:mainchar}.

\begin{cor}\label{C:separable}
Let $W$ be a Coxeter group with $m_{st}\geq 3$ for all $s\neq t,$ and let $w\in W$ be palindromic.
Then $w$ has a unique separable factorization $w=u_1\cdots u_d$ where each $u_i$ is inseparable and
palindromic. Moreover, any complete Grassmannian factorization $u_i=v_1\cdots v_{|\supp(u_i)|}$ is
unique up to choice of $v_1.$ \end{cor}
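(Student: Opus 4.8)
The statement to prove is Corollary \ref{C:separable}: for a Coxeter group $W$ with $m_{st}\geq 3$ for all $s\neq t$, every palindromic $w$ has a \emph{unique} separable factorization into inseparable palindromic factors, and each such factor has an essentially unique complete Grassmannian factorization. My plan is to leverage the construction already sketched in the text: given a complete Grassmannian factorization $w=v_1\cdots v_{|\supp(w)|}$, I group consecutive $v_i$'s by cutting at each index where $\ell(v_{i_j})=1$, producing $w=u_1\cdots u_d$. I would first verify this is genuinely a separable factorization and that each $u_j$ is inseparable and palindromic; the key observation is that under the hypothesis $m_{st}\geq 3$, the rank-$2$ factors in Equation \eqref{E:rank2poly} and the rank-$3$ possibilities of Theorem \ref{T:mainchar} force each ``new generator'' introduced by a length-one $v_{i_j}$ to be either disconnected from or tightly bound to the previous support, so that the blocks $u_j$ have pairwise disjoint supports exactly when the added generator commutes with everything before it.

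\textbf{Key steps.}
First I would establish that palindromicity is preserved by the blocking: since $P_w(q)=\prod_i P_{v_i}^{J_i}(q)$ factors through the complete Grassmannian factorization (as in the proof of Theorem \ref{T:main}), and the support of $u_j$ is disjoint from the others, the $u_j$ live in a standard parabolic subgroup $W_{\supp(u_j)}$ and $P_{u_j}$ is the product of the corresponding consecutive factors, which is palindromic because each factor is (under the $m_{st}\geq 3$ hypothesis, Lemma \ref{L:rank3v} part (2) cannot occur, so every factor is palindromic). Second, I would argue inseparability of each $u_j$: by construction $u_j$ begins with a single new generator $v_{i_j}$ and every subsequent $v_i$ in the block shares support with the running support, so no nontrivial disjoint splitting is possible inside $u_j$. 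Third, for uniqueness of the separable factorization, I would use the standard fact that when $W=\prod_k W_{T_k}$ is the decomposition of a parabolic into connected components of its Coxeter graph, any element $w$ factors uniquely as a commuting product over the components; since $m_{st}\geq 3$ means two generators are connected precisely when they do not commute, the separable factorization corresponds exactly to the decomposition of $\supp(w)$ into connected components, which is canonical.

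\textbf{Uniqueness of the Grassmannian factorization up to $v_1$.}
The last and most delicate claim is that a complete Grassmannian factorization of an inseparable palindromic $u_i$ is unique once $v_1$ is chosen. Here I would argue inductively: given $v_1$, the next factor $v_2$ must introduce a generator adjacent (non-commuting) to $\supp(v_1)$ since $u_i$ is inseparable, and Theorem \ref{T:mainchar} pins down the form of each $v_{j}$ relative to $J_j=\supp(v_1\cdots v_{j-1})$ — the rank-$2$ case gives $v_j$ as a prefix of the unique braid word, and the rank-$3$ cases (1) and (3) give explicitly determined words. The point is that at each stage $v_j$ is forced by the BP-decomposition property together with the palindromicity constraint from Lemma \ref{L:2palcondition}, so the only freedom is the initial choice. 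The \textbf{main obstacle} I anticipate is precisely this rigidity argument: I must rule out that two different complete Grassmannian factorizations with the same $v_1$ could diverge at some later stage, which requires showing that the multiset of supports $\{\supp(v_j)\}$ and the order in which new generators appear are both determined by the element $u_i$ alone. I expect this to follow from the fact that each new generator added must be a non-commuting neighbor of the current support (forcing connectivity to propagate uniquely in a tree-like or otherwise constrained graph), but carefully checking that the rank-$3$ overlaps (where cases (1) and (3) coincide at $m_{rs}=m_{st}=3$) do not introduce genuine ambiguity in the \emph{element} $u_i$ will be the technical heart of the argument.
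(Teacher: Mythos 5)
Your proposal contains two genuine errors, and the final claim is missing its key mechanism. First, your justification that each block $u_j$ is palindromic rests on the assertion that under the hypothesis $m_{st}\geq 3$ the factor in Lemma \ref{L:rank3v} part (2) cannot occur. This is false: that factor comes from the triangle group $(3,3,c)$ with $3<c<\infty$, all of whose labels are $\geq 3$, so it is perfectly compatible with the hypothesis. The paper's second example lives in a group with all $m_{st}\geq 3$ (Figure \ref{F:dynkin1}, which contains the triangle $(3,3,4)$) and exhibits exactly such a factor. What actually rules out part (2) factors here is the assumption that $w$ itself is palindromic (if such a factor appeared, $P_w$ would fail to be $4$-palindromic, as in the proof of Theorem \ref{T:main}), not any property of $W$. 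Second, and more seriously, your uniqueness argument for the separable factorization misidentifies separability with a commuting, direct-product decomposition over connected components of the Coxeter graph. Under $m_{st}\geq 3$ for all $s\neq t$ the Coxeter graph is \emph{complete}, so your argument would prove that every element is inseparable --- contradicted by the paper's example $w=(s_4s_2s_4s_2)(s_3s_1s_3)$, which is a nontrivial separable factorization even though $s_2$ and $s_3$ do not commute. Separability in this paper means only that the reduced factorization has pairwise disjoint supports; the factors need not, and generally do not, commute. The correct uniqueness argument is in fact the opposite of yours: precisely \emph{because} no two generators commute, no braid move can straddle two blocks with disjoint supports, so every reduced word of $w$ respects the block structure, and the decomposition into inseparable blocks is forced.

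For the uniqueness of the complete Grassmannian factorization of an inseparable block up to $v_1$, your sketch gestures at "connectivity propagating in a tree-like graph," which again cannot be the mechanism (the graph is complete), and it leaves the actual rigidity unproved. The paper's argument has two precise steps you are missing: (i) inseparability of $u_i$ forces $|\supp(v_j)|\geq 2$ for all $j\geq 2$, because if $\supp(v_j)=\{s_j\}$ then Lemma \ref{L:BPassoc} yields $\supp(v_{j+1}\cdots v_{|\supp(u_i)|})\cap\supp(v_1\cdots v_j)\subseteq\{s_j\}$, so $(v_1\cdots v_{j-1})(v_j\cdots v_{|\supp(u_i)|})$ would be a separable factorization; (ii) Lemma \ref{L:stdescent} then gives $D_L(v_j\cdots v_{|\supp(u_i)|})\subseteq \supp(v_j)\setminus\supp(v_1\cdots v_{j-1})=\{s_j\}$, so each $u_i=(v_1\cdots v_{j-1})(v_j\cdots v_{|\supp(u_i)|})$ is the \emph{canonical} parabolic decomposition with respect to $\supp(v_1\cdots v_{j-1})$. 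Hence the sequence of new generators $s_2,\ldots,s_{|\supp(u_i)|}$, and with it every $v_j$, is determined once $v_1$ is chosen. This also dissolves the obstacle you flag at the end: one never needs to decide which case of Theorem \ref{T:mainchar} a given $v_j$ falls into (nor worry about the overlap of cases (1) and (3)), because $v_j$ is pinned down as a quotient in a canonical parabolic decomposition, not by matching its reduced word against the list in the theorem.
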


\begin{proof}
Any element $w$ has a separable factorization $w=u_1\cdots u_d$ where each
$u_i$ is inseparable.  Since $\supp(u_i)$ is distinct and $W$ has no commuting
braid relations, the factorization is unique.  If $w$ is palindromic, then
every $u_i$ is palindromic since $(u_1\cdots u_i)(u_{i+1})$ is a
BP-decomposition with respect to $J=S\backslash\supp(u_i).$

\smallskip

Let $u_i=v_1\cdots v_{|\supp(u_i)|}$ be a complete Grassmannian factorization,
and let $s_j$ be the unique element of $D_L(v_j)$. Note that $v_1 = s_1$. As
mentioned above, since $u_i$ is inseparable, we must have $|\supp(v_j)| \geq 2$ for
$j = 2, \ldots, |\supp(u_i)|$. Indeed, if $\supp(v_j) = \{s_j\}$ then $s_j$ is the
unique right descent of $v_1 \cdots v_j$, since $s_j \not\in \supp(v_1 \cdots
v_{j-1})$. But by Lemma \ref{L:BPassoc}, $(v_1 \cdots v_j) (v_{j+1} \cdots v_{|\supp(u_i)|})$
is a BP decomposition, so $$\supp(v_{j+1} \dots v_{|\supp(u_i)|}) \cap
\supp(v_1 \cdots v_j) \subset \{s_j\}.$$ Thus $(v_1 \cdots v_{j-1})
(v_{j} \cdots v_{|\supp(u_i)|})$ is a separable factorization, which is a
contradiction.

\smallskip

We now show that $s_j$ is the unique left descent of $v_j \cdots
v_{|\supp(u_i)|}$, for $j \geq 2$. Indeed, looking ahead to Lemma
\ref{L:stdescent}, and using the fact that $|\supp(v_j)| \geq 2$, we see that
$D_L(v_j \cdots v_{|\supp(u_i)|})$ is a subset of $\supp(v_j) \setminus \supp(v_1\cdots
v_{j-1}) = \{s_j\}$. Hence the sequence $(s_2,\ldots,s_{|\supp(u_i)|})$ is
uniquely determined given the choice of $v_1 = s_1$, and the $v_j$'s
are uniquely determined from the corresponding parabolic decomposition.

\end{proof}

Note that there are at most two complete Grassmannian factorizations of each $u_i$ in Corollary \ref{C:separable}.  For example, taking $u_1$ in Equation
\eqref{E:insep_example}, we have
$$u_1=\underbrace{s_4}_{v_1}\underbrace{s_2s_4s_2}_{v_2}=\underbrace{s_2}_{v_1}\underbrace{s_4s_2s_4}_{v_2}$$
as the only two complete Grassmannian factorizations.

Corollary \ref{C:separable} implies that to count the number of palindromic elements of $W,$ it is
sufficient to enumerate elements of $W$ which are inseparable and palindromic.  When $m_{st}$ is
constant we compute an exponential generating series for the number of palindromic elements.
Specifically, let $W(m,n)$ denote the \emph{uniform Coxeter group} such that $|S|=n$ and $m_{st}=m$
for all $s\neq t.$ Uniform Coxeter groups satisfy the property that every $2$-palindromic element
$w$ is palindromic by Theorem \ref{T:mainchar}.  Define the generating series
$$\Phi_m(q,t):=\sum_{n,k\geq 0}P_{n,k}\, \frac{q^kt^n}{n!}$$
where $P_{n,k}$ denotes the number of palindromic $w\in W(m,n)$ of length $k.$ In the case that
$m=2,$ we have $W(2,n)\simeq (\Z/2\Z)^n$ with every element palindromic, so $P_{n,k}=\binom{n}{k}.$  Hence the generating
series
$$\Phi_2(q,t)=\exp(qt+t).$$  For $m\geq 3,$ define
$$\phi_m(q,t):=\sum_{n,k\geq 1}I_{n,k}\, \frac{q^kt^n}{n!}$$ where $I_{n,k}$ denotes the number of palindromic $w\in W(m,n)$ of length $k$ that are inseparable with $|\supp(w)|=n.$
Note that $\Phi_m$ and $\phi_m$ are exponential in $t$ and ordinary in $q$. Corollary
\ref{C:separable} implies
\begin{prop}\label{P:Uniform1}
For any $3\leq m\leq \infty,$ the series
$$\displaystyle\Phi_m(q,t)=\frac{\exp(t)}{1-\phi_m(q,t)}.$$
\end{prop}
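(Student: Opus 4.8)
The plan is to read both series as exponential generating functions in the variable $t$ (which marks the number of generators $|S|=n$ and treats $S$ as a labelled set) and as ordinary generating functions in $q$ (which marks length), and then to reduce the identity to the structural decomposition furnished by Corollary \ref{C:separable}. Since $m\ge 3$, that corollary applies: every palindromic $w\in W(m,n)$ has a unique separable factorization $w=u_1\cdots u_d$ into inseparable palindromic factors with pairwise disjoint supports. The supports $\supp(u_1),\ldots,\supp(u_d)$ form an ordered set partition of $\supp(w)$, while the generators in $S\setminus\supp(w)$ are simply unused. This is precisely the data encoded by $e^t(1-\phi_m)^{-1}$: the factor $e^t$ accounts for the unused generators (a structureless ``set'', of weight $q^0$), while $(1-\phi_m)^{-1}=\sum_{d\ge 0}\phi_m^{\,d}$ accounts for ordered sequences of inseparable palindromic pieces.

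First I would establish that this correspondence is a bijection. The forward map is Corollary \ref{C:separable}, and it is injective since $w$ is recovered as the product $u_1\cdots u_d$. For the converse I would start from inseparable palindromic elements $u_1,\ldots,u_d$ with pairwise disjoint supports and show that $w:=u_1\cdots u_d$ is palindromic with canonical factorization $(u_1,\ldots,u_d)$. Writing $J_i:=\supp(u_1\cdots u_i)$, the key point is that $\supp(u_{i+1})$, hence $D_L(u_{i+1})$, is disjoint from $J_i$, so $u_{i+1}\in W^{J_i}$; since $u_1\cdots u_i\in W_{J_i}$, uniqueness of the parabolic decomposition identifies $(u_1\cdots u_i)(u_{i+1})$ as the parabolic decomposition of $u_1\cdots u_{i+1}$ with respect to $J_i$, whence $\ell(u_1\cdots u_{i+1})=\ell(u_1\cdots u_i)+\ell(u_{i+1})$ and the factorization is reduced. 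The condition of Lemma \ref{L:BPlemma} holds vacuously (as $\supp(u_{i+1})\cap J_i=\emptyset$), so these are BP-decompositions; since every element below $u_{i+1}$ is supported away from $J_i$ and hence lies in $W^{J_i}$, we have $P^{J_i}_{u_{i+1}}=P_{u_{i+1}}$, and Proposition \ref{P:para} gives $P_w(q)=\prod_i P_{u_i}(q)$. Being a product of palindromic polynomials, $P_w$ is palindromic, so $w$ is palindromic; and $(u_1,\ldots,u_d)$ is then a separable factorization into inseparable palindromic pieces, hence equals the canonical one by the uniqueness in Corollary \ref{C:separable}. This inverts the forward map, and length-additivity makes the weight $q^{\ell(w)}=\prod_i q^{\ell(u_i)}$ multiplicative across the pieces.

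With the bijection in hand the identity is a standard species computation. Because $W(m,n)$ is uniform, permuting generators is an automorphism, so the number of inseparable palindromic elements of length $k$ supported on any fixed $j$-element subset of $S$ equals $I_{j,k}$; hence the ``inseparable palindromic piece on a block'' has exponential generating function $\phi_m(q,t)$, which has no constant term in $t$ (matching nonempty blocks). Ordered sequences of such pieces over an ordered set partition of the used generators then contribute $\sum_{d\ge0}\phi_m(q,t)^d=(1-\phi_m(q,t))^{-1}$ by the product rule for exponential generating functions, while the complementary unused generators contribute $e^t$. Multiplying these two factors over complementary label sets gives $\Phi_m(q,t)=e^t(1-\phi_m(q,t))^{-1}$.

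I expect the only real obstacle to be the converse half of the bijection, namely that an arbitrary product of inseparable palindromic factors with disjoint supports is again palindromic and reconstructs the given factorization. Everything there rests on the observation that disjointness of supports places each successive factor in the appropriate set $W^{J_i}$ of minimal coset representatives; once that is established, Proposition \ref{P:para} and the uniqueness clause of Corollary \ref{C:separable} close the argument, and the remaining generating-function manipulation is routine apart from the uniformity step that makes the per-block count depend only on the block size.
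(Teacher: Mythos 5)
Your proof is correct and follows essentially the same route as the paper: the paper derives Proposition \ref{P:Uniform1} directly from Corollary \ref{C:separable} via the standard exponential-generating-function product rule (unused generators give $\exp(t)$, ordered sequences of inseparable palindromic blocks give $(1-\phi_m)^{-1}$), which is exactly your species argument. The one thing you supply that the paper leaves implicit is the surjectivity half of the bijection --- that any product of inseparable palindromic elements with pairwise disjoint supports is again palindromic, via $W^{J_i}$-membership, Lemma \ref{L:BPlemma}, and Proposition \ref{P:para} --- and your verification of it is sound.
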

The following proposition completes the calculation.
\begin{prop}\label{P:Uniform2}
The exponential generating series for the number of inseparable palindromic elements in $W(m,n)$ is

\begin{equation}\label{E:enumeration}
\phi_m(q,t)=\begin{cases}\quad \displaystyle\frac{(2q-2q^3)t-(3q^3+q^5)t^2}{2-2q^2-4q^2t}\quad\ \
\text{for $m=3$}\\ \\
\displaystyle\frac{2qt-3q^{m}t^2-q^{m+2}[m-3]_qt^3}{2-2q^2t([m-2]_q+q^{m-3})}\quad \text{for $4\leq
m<\infty$}\\ \\ \qquad\qquad \displaystyle\frac{qt-q^2t}{1-q-q^2t}\qquad\qquad\ \ \text{for
$m=\infty$}.
\end{cases}\end{equation}
\end{prop}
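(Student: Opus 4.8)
The plan is to compute $\phi_m$ by directly enumerating inseparable palindromic elements with full support, organized by their complete Grassmannian factorizations, and then packaging the count as a functional equation in $t$ whose solution is \eqref{E:enumeration}. I would start from Corollary \ref{C:separable}: an inseparable palindromic $u$ with $|\supp(u)|=n$ has a complete Grassmannian factorization $u=v_1v_2\cdots v_n$, unique up to the (at most two) choices of seed $v_1$, in which every $v_j$ with $j\geq 2$ satisfies $|\supp(v_j)|\geq 2$ and introduces exactly one new generator $s_j$. Specializing Theorem \ref{T:mainchar} to $W(m,n)$, where every rank-$3$ parabolic is the triangle $(m,m,m)$, I note that all three cases of that theorem require a bond equal to $3$; hence for $4\leq m\leq\infty$ every $v_j$ ($j\geq 2$) has $|\supp(v_j)|=2$, while for $m=3$ the factor $v_j$ is either a rank-$2$ factor or a spiral word in a $(3,3,3)$ triangle.

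Because $v_j\in W^{J_j}$ has unique left descent $s_j$ and $w=(v_1\cdots v_{j-1})v_j$ is a BP-decomposition, the rank-$2$ factor is an alternating word $v_j=s_jts_j\cdots$ attached along an old generator $t\in D_R(v_1\cdots v_{j-1})$, of length $2\leq\ell\leq m-1$ (any $\ell\geq 2$ if $m=\infty$). I would then check, via Lemma \ref{L:2palcondition}, that every such attachment preserves $2$-palindromicity: $\pred(v_j)$ consists of two elements, the one beginning with $s_j$ multiplies up into $\pred(w)$ while the one beginning with $t$ does not (its length drops because $t\in D_R(v_1\cdots v_{j-1})$), so the relevant intersection has size exactly one. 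The decisive structural point is a dichotomy in this step: a factor of length $\ell\leq m-2$ contributes a single new right descent, whereas a factor of length $\ell=m-1$ together with the trailing $t$ completes the longest element $w_0$ of $\langle s_j,t\rangle$, so that both $s_j$ and $t$ become right descents and, via the two reduced words of $w_0$, the element acquires a second Grassmannian factorization. Summing $q^{\ell}$ over $\ell\in\{2,\ldots,m-1\}$, with the $w_0$-completing length $m-1$ carrying an extra weight, yields the rank-$2$ attachment weight $q^2\bigl([m-2]_q+q^{m-3}\bigr)$; for $m=\infty$ this finite sum becomes the geometric series $q^2/(1-q)$, and for $m=3$ one must add the spiral factors, whose admissible even lengths and palindromic contributions come from Lemma \ref{L:rank3v}(3) and sum to a further geometric series in $q^2$ (the origin of the $-2q^2$ term in the $m=3$ denominator).

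Next I would set up a recursion on $n$ by removing the last-added generator, using the descent-set control of Lemma \ref{L:stdescent} to count attachment sites, and convert ``adjoin one labeled generator, at an available right descent, through an admissible factor'' into exponential-generating-function operations (each new label contributing a factor of $t$, the attachment and length data contributing the appropriate polynomial in $q$). This should produce a linear equation of the form $\phi_m=(\text{seed and low-order terms})+t\,A_m(q,t)\,\phi_m$, and solving for $\phi_m$ gives the three rational expressions of \eqref{E:enumeration}; in each case the $n=1$ term $qt$ recovers the unique seed $s_1$.

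The heart of the matter, and the step I expect to be the main obstacle, is the bookkeeping that keeps this recursion linear in $t$ and free of overcounting. I must (i) use Lemma \ref{L:stdescent} to show that the right-descent data relevant to future attachments is captured by a single active statistic, despite the fact that $w_0$-completing factors enlarge $D_R$; this is what forces the denominator to be linear in $t$ for finite $m\geq 4$ and determines its precise coefficients. And (ii) I must correct for elements possessing two complete Grassmannian factorizations, exactly those built through a local $w_0$, which is responsible for the uniform factor of $2$ in the numerator and denominator of \eqref{E:enumeration} and for the doubled $q^{m-1}$ contribution. The finitely many small or $w_0$-anchored configurations that do not fit the generic recursion are precisely what the polynomial numerators (the $t^2$ and $t^3$ corrections for $4\leq m<\infty$, and the $t^2$ correction for $m=3$) record, so verifying these initial terms by hand completes the computation.
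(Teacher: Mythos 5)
Your overall strategy is the same as the paper's: use Theorem \ref{T:mainchar} and Corollary \ref{C:separable} to reduce to counting complete Grassmannian factorizations whose non-initial factors are alternating words of length $2,\ldots,m-1$ (spiral words when $m=3$), keep track of whether the partial product has one or two right descents, and solve the resulting first-order linear recursion, checking the small cases by hand. The paper implements this with the pair $(A_n,B_n)$ counting inseparable palindromic elements with $|D_R|=1,2$ respectively, via
\begin{align*}
A_n&=q^2[m-3]_q\bigl(A_{n-1}+2B_{n-1}\bigr), & B_n&=q^{m-1}\bigl(A_{n-1}+2B_{n-1}\bigr)\qquad (n\geq 3),
\end{align*}
whose trace $q^2\bigl([m-2]_q+q^{m-3}\bigr)$ is precisely your attachment weight; your ``single active statistic'' is the combination $A_n+2B_n$.

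There is, however, one genuinely false structural claim in your write-up, and it sits exactly where the bookkeeping is most delicate. You assert that a factor of length $m-1$, by completing a local $w_0$, makes ``the element acquire a second Grassmannian factorization,'' and in point (ii) you say the elements with two complete Grassmannian factorizations are ``exactly those built through a local $w_0$,'' blaming this double counting for the doubled $q^{m-1}$. By Corollary \ref{C:separable}, two complete Grassmannian factorizations can differ only in the choice of the seed $v_1$, hence exist only when $|D_L(u)|=2$, i.e.\ exactly when the \emph{first two} factors $v_1v_2$ form a dihedral longest element. An element whose \emph{last} factor is $w_0$-completing, such as $s_1s_2s_1\cdot s_3s_1s_3$ in $W(4,3)$, still has a unique complete Grassmannian factorization, since its left descent set is still a singleton. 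Consequently the coefficient $2q^{m-1}$ is not a correction for doubly counted factorizations: it is the forward-looking weight recording that a two-descent prefix offers two attachment sites to the \emph{next} factor (the $2B_{n-1}$ above). The only genuine double-counting correction occurs at the seed $n=2$, where the dihedral $w_0$ contributes $B_2=q^m/2$ rather than $q^{m-1}\cdot A_1=q^m$; this is the actual source of the overall factor of $2$ in \eqref{E:enumeration}. Had you instead halved the contribution at every $w_0$-completing step, as your claim suggests, the coefficient of $t^n$ would come out wrong for all $n\geq 3$. With this one point repaired -- double for attachment sites at every step, halve only at the $n=2$ seed -- your functional equation reproduces the paper's computation, including the $m=3$ and $m=\infty$ variants.
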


\begin{proof}By Theorem \ref{T:mainchar}, $|D_R(w)|\leq 2$ for any palindromic $w\in W(m,n).$  Hence we
can partition the set of inseparable palindromic elements into those with $|D_R(w)|=1,2$
respectively. For notation, let $A_{n,k}$ be the number of inseparable palindromic $w\in W(m,n)$ of
length $k$ with $|\supp(w)|=n$ and $D_R(w)=1.$  Let $B_{n,k}$ be the number of those same elements with $D_R(w)=2.$
We have $I_{n,k}=A_{n,k}+B_{n,k}.$
Consider the polynomials
$$A_n(q):=\frac{1}{n!}\sum_{k\geq 1} A_{n,k}\, q^k\quad \text{and}\quad B_n(q):=\frac{1}{n!}\sum_{k\geq 1} B_{n,k}\, q^k.$$
If $n=1,$ then
$$ A_1(q) = q\qquad\text{and}\qquad B_1(q) = 0.$$
If $3\leq m<\infty,$ then for  $n=2,$ the inseparable elements have the form $s_1s_2s_1\cdots$ or $s_2s_1s_2\cdots$ where the length is at least 3.  There is also a unique longest element $w_0:=\underbrace{s_1s_2\cdots}_m$ with $|D_R(w_0)|=2.$  This gives
$$A_2(q) =q^3[m-3]_q \qquad\text{and}\qquad  B_2(q) =\frac{q^m}{2}.$$
For the remainder of the proof, let $w=v_1\cdots v_{|\supp(w)|}\in W(m,n)$ be a complete
Grassmannian factorization.  We first consider the case when $m=3.$  If $w$ is palindromic and
inseparable, then by Theorem \ref{T:mainchar}, each $v_i$ is a spiral word as in Theorem
\ref{T:mainchar} part 3.  In particular, for each even length, there is a unique $v_i$ of up to $S_3$ permutation symmetry on the generators $\{r,s,t\}.$  Moreover, if $|\supp(w)|\geq 3,$ then $|D_L(w)|=2.$  Thus for all $n\geq 3,$ we have $A_n(q)=0$ and
$$B_n(q)=\left(\frac{2q^2}{1-q^2}\right)B_{n-1}(q)=\frac{q^3}{2}\left(\frac{2q^2}{1-q^2}\right)^{n-2}.$$
Hence $$\phi_3(q,t)=qt+\frac{q^3t^2}{2}+\frac{q^5}{1-q^2}\sum_{n\geq
3}\left(\frac{2q^2}{1-q^2}\right)^{n-3}t^n.$$  This proves the first equation in
\eqref{E:enumeration}.

Now suppose $4\leq m<\infty.$  In this case, if $w$ is palindromic and inseparable, then Theorem
\ref{T:mainchar} implies $|\supp(v_i)|\leq 2.$   Hence each factor $v_i$ has a reduce expression $stst\cdots$ where $t\in D_L(v_1\cdots v_{i-1}).$  In particular, when constructing $w=v_1\cdots v_{|\supp(w)|},$ there are exactly twice as many choices for $v_i$ if $D_L(v_1\cdots v_{i-1})=2$ than if $D_L(v_1\cdots v_{i-1})=1.$  This yields that for $n\geq 3,$ the polynomials $A_n(q)$ and
$B_n(q)$ satisfy the first order recurrence
\begin{align*}A_n(q)&=q^2[m-3]_q\big(A_{n-1}(q)+2B_{n-1}(q)\big)\\
B_n(q)&=q^{m-1}\big(A_{n-1}(q)+2B_{n-1}(q)\big).\end{align*} This implies that
\begin{align*}\left[\begin{matrix}
  A_n(q)   \\
  B_n(q)
 \end{matrix}\right]&=
 \left[\begin{matrix}
  q^2[m-3]_q & 2q^2[m-3]_q  \\
  q^{m-1} & 2q^{m-1}
 \end{matrix}\right]^{n-2}\left[\begin{matrix}
  A_2(q)   \\
  B_2(q)
 \end{matrix}\right]\\ \\ &= q^5[m-2]_q(q^2[m-3]_q+2q^{m-1})^{n-3}\left[\begin{matrix}
  [m-3]_q   \\
  q^{m-3}
 \end{matrix}\right]\\ \\ &= q^7[m-2]_q([m-2]_q+q^{m-2})^{n-3}\left[\begin{matrix}
  [m-3]_q   \\
  q^{m-3}
 \end{matrix}\right].\end{align*}
Thus
\begin{align*}\phi_m(q,t)&=q\, t+\left(q^3[m-3]_q+\frac{q^m}{2}\right)\, t^2+q^7[m-2]^2\sum_{n\geq
3}\left([m-2]_q+q^{m-3}\right)^{n-3}\, t^n\end{align*} which proves the second equation in
\eqref{E:enumeration}.

Finally, we compute the exponential generating series for the uniform Coxeter group $W(\infty,n)$
by taking the limit of $\phi_m$ in the second equation of \eqref{E:enumeration} as $m\rightarrow
\infty.$ This is equivalent to taking $$q^m\rightarrow 0\quad\text{and}\quad [m]_q\rightarrow
\frac{1}{1-q}$$ which yields the third equation in \eqref{E:enumeration}.\end{proof}

The following equations are the first few terms in the Taylor expansion of $\Phi_m(q,t)$ for
$m=3,4,\infty.$  These calculations were computed using the combinat package for Mupad.

\begin{align*}
\Phi_3(q,t)=1&+(1+q)\, t+(1+2q+2q^2+q^3)\, \frac{t^2}{2}\\
&+(1+3q+6q^2+9q^3+6q^4+6q^5+6q^7+O(q^9))\, \frac{t^3}{6}\\
&+(1+4q+12q^2+30q^3+48q^4+60q^5+54q^6+O(q^7))\, \frac{t^4}{24}+O(t^5)
\end{align*}

\begin{align*}
\Phi_4(q,t)=1&+(1+q)\, t+(1+2q+2q^2+2q^3+q^4)\, \frac{t^2}{2}\\
&+(1+3q+6q^2+12q^3+15q^4+12q^5+12q^6+6q^7)\, \frac{t^3}{6}\\
&+(1+4q+12q^2+36q^3+78q^4+120q^5\\
& \qquad + 156q^6+168q^7+150q^8+120q^9+48q^{10})\, \frac{t^4}{24}+O(t^5)
\end{align*}


\begin{align*}
\Phi_{\infty}(q,t)=1&+(1+q)\, t+(1+2q+2q^2+2q^3+2q^4+2q^5+O(q^6))\, \frac{t^2}{2}\\
&+(1+3q+6q^2+12q^3+18q^4+24q^5+O(q^6))\, \frac{t^3}{6}\\
&+(1+4q+12q^2+36q^3+84q^4+156q^5+O(q^6))\, \frac{t^4}{24}+O(t^5)
\end{align*}

\smallskip

By evaluating $\Phi_m(q,t)$ at $q=1,$ we can recover the total number of palindromic elements in
$W(m,n)$.  By Corollary \ref{C:finite}, this value is finite only when $4\leq m<\infty.$  We list
these values for $4\leq m\leq 8$ and $1\leq n\leq 7$ in Figure \ref{F:list_pal}.

\begin{figure}[h!]
\begin{center}
\begin{tabular}{c|ccccccc}
$m\, \diagdown\, n$ \TabTop \TabBot & 1 & 2 & 3 & 4 & 5 & 6 &7\\ \hline
4 \TabTop\TabBot & 2 & 8 & 67 & 893 & 15596 & 330082 & 8165963\\
5 & 2 & 10 & 115 & 2057 & 47356&1314292 & 42584795\\
6 & 2 & 12 & 175 & 3893 & 110436&3768982 & 150113447\\
7 & 2 & 14 & 247 & 6545 & 219956&8884312 & 418725119\\
8 & 2 & 16 & 331 & 10157 & 393916&18351562 & 997538291\\
\end{tabular}\end{center}
\caption{Number of palindromic elements in $W(m,n)$}\label{F:list_pal}
\end{figure}

\section{Properties of triangle groups in $\Tria$}\label{S:HQgroups}

We discuss a few properties of triangle groups in $\Tria.$  The first property is that there are
$k$-palindromic Poincar\'{e} polynomials which are not palindromic for large $k$:

\begin{prop}\label{P:HQprop1}
Let $W$ be the triangle group $(2,b,c)$ with $S=\{r,s,t\}$ such that
$$(rs)^2=(rt)^b=(st)^c=e$$
where $b,c\geq 3$ and $c$ is finite.  Then there exist elements $w\in W$ which
are $(c-2)$-palindromic but not palindromic.\end{prop}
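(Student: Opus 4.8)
The plan is to exhibit one explicit element and compute its Poincar\'{e} polynomial directly by a subword analysis of the Bruhat interval. Let $w_0$ be the longest element of the finite dihedral group $\langle s,t\rangle$, so $\ell(w_0)=c$. Let $A$ be the (unique) reduced word in $\{s,t\}$ of length $c-1$ ending in $s$, and let $B$ be the reduced word in $\{s,t\}$ of length $c-2$ beginning with $t$, and set
\[ w \;=\; A\,r\,B, \]
a word of length $\ell(w)=2c-2$ with $\supp(w)=\{r,s,t\}$. Because $r$ commutes with the final letter $s$ of $A$ but not with the initial letter $t$ of $B$, this word is reduced. (For $c=3$ this gives $w=tsrt=trst$, the element that is already $1$-palindromic but not palindromic.) A useful feature is that $w$ contains only a single occurrence of $r$, so that no subword can invoke any braid relation in $\langle r,t\rangle$; the computation will therefore be uniform in $b$ and will apply equally when $b=\infty$. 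First I would record two structural facts that drive everything: that $w_0\le w$ (realized by $A$ together with the first letter $t$ of $B$, skipping the intervening $r$, since $At=w_0$), and that $D_L(rB)=\{r\}$, so that $w=A\cdot(rB)$ is the parabolic decomposition of $w$ with respect to $J=\{s,t\}$.

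The role of this decomposition is that it is a Grassmannian factorization which is \emph{not} a BP-decomposition: the maximal element of $[e,w]\cap W_{\{s,t\}}$ is $w_0$, of length $c$, rather than $A$, of length $c-1$. Consequently Proposition \ref{P:para} does not apply and $P_w(q)$ does not factor as $P_A(q)\,P^{\{s,t\}}_{rB}(q)$. This is exactly the breakdown of factorization that one must expect for a group in $\Tria$, and it is what allows $w$ to be highly palindromic and yet singular; the surplus element $w_0$ is the obstruction.

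The main task is then to compute $P_w(q)=\sum a_i q^i$ directly, counting for each $i$ the elements $x\le w$ with $\ell(x)=i$ via the fact that $x\le w$ iff some reduced word of $x$ is a subword of the fixed word $ArB$. The aim is to prove that the rank numbers satisfy $a_i=a_{2c-2-i}$ for all $i<c-2$, while $a_c=a_{c-2}+1$, so that $w$ is $(c-2)$-palindromic but fails palindromicity first at rank $c-2$, where $2c-2-(c-2)=c$. Concretely I expect a length-reversing correspondence matching the bottom of the interval (ranks $0,\dots,c-2$) with the top (ranks $2c-2,\dots,c$), breaking down only at the single element $w_0\in[e,w]$ of length $c$, which has no partner of length $c-2$. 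The hard part will be making this matching rigorous: the relation $rs=sr$ makes the naive subword count genuinely non-injective, since distinct subwords often represent the same group element and any word in $s,t$ of length exceeding $c$ collapses in $\langle s,t\rangle$. Carefully bookkeeping these coincidences to verify the symmetry through rank $c-3$ and to isolate the lone surplus at rank $c$ is the crux; granting it, $a_{c-2}\neq a_c$ shows $w$ is not palindromic while the lower agreements give $(c-2)$-palindromicity, completing the proof.
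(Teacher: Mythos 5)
You have chosen exactly the right element and set it up correctly: your $w=ArB$ is precisely the paper's $w=uv$ with $u^{-1}=stst\cdots$, $v=rtst\cdots$ and $\ell(u)=\ell(v)=c-1$; the facts that $w_0\leq w$, that $D_L(rB)=\{r\}$, that $A\cdot(rB)$ is a Grassmannian factorization which fails to be a BP-decomposition (cf.\ Proposition \ref{P:HQprop2}), and that the computation is uniform in $b$, are all correct. Your predicted coefficient pattern is also the right one: in the paper's $c=4$ example, $P_w(q)=1+3q+5q^2+7q^3+6q^4+3q^5+q^6$, so $a_{c-2}=5$ and $a_c=6$, confirming $a_c=a_{c-2}+1$. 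But what you have written is a plan, not a proof. The proposition \emph{is} the assertion about the coefficients $a_i$, and every such assertion in your write-up is conditional on a ``length-reversing correspondence'' that you never construct; you say yourself that you are granting the crux. Nothing about the $a_i$ has actually been established.

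The gap is genuine rather than expository, because the correspondence you envision cannot be a naive subword-reversal matching, and the bookkeeping you defer is the entire content of the proof. The missing idea is to partition $[e,w]$ by membership in $W_{\{s,t\}}$. The dihedral part is the full interval $[e,w_0]$, with rank counts $1,2,2,\ldots,2,1$ in degrees $0,\ldots,c$, hence symmetric about $c/2$. For the complement, each $x\in[e,w]\setminus W_{\{s,t\}}$ has parabolic decomposition $x=u'v'$ with $v'\in W^{\{s,t\}}$ nontrivial, and $x\leq w$ exactly when $u'\leq A$ and $v'$ lies in the chain $r<rt<rts<\cdots<rB$; so this part is in bijection with $[e,A]\times([r,rB]\cap W^{\{s,t\}})$, with rank generating function $P_A(q)\cdot q\,[c-1]_q$, symmetric about $(2c-1)/2$. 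Neither piece has the symmetry $i\mapsto 2c-2-i$ being tested, and all elements of rank $>c$ lie in the complement while the bottom ranks contain dihedral elements; thus any rank-reversing matching must send dihedral elements to elements whose support contains $r$, so there is no canonical subword pairing to ``bookkeep.'' Adding the two explicit generating functions (this is Proposition \ref{P:para} in spirit, applied piecewise since the global factorization fails) gives $a_i=2i+1=a_{2c-2-i}$ for $i\leq c-3$, $a_{c-2}=2c-3$, and $a_c=2c-2$, which is the statement. This two-piece exact count is what the paper does (note its displayed formula has a typo, $q^{c+1}$ in place of $q^c$, and the stated value $a_{c-2}=2c-1$ should be $2c-3$, as its own $c=4$ example shows); replacing your unconstructed matching with this count is what is needed to complete your argument.
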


\begin{proof}
Consider $w=uv$ where \begin{equation}\label{E:HQexample} u^{-1}=stst\ldots \qquad \text{and}\qquad
v=rtstst\ldots\end{equation} with $\ell(u)<c$ and $\ell(v)\leq c.$  Calculation of the polynomial
$P_w(q)$ reduces to determining the cardinality of the sets
$$M_k:=\{w'\leq w\ |\ \ell(w')=k\}.$$
First we partition
$$M_k=(M_k\cap W_{\{s,t\}})\, \sqcup\,  (M_k\cap W\backslash W_{\{s,t\}}).$$
If $w'\in W_{\{s,t\}},$ then $w'$ has the form $sts\cdots$ or $tst\cdots$.  Hence
$$|M_k\cap W_{\{s,t\}}|=\begin{cases}
2\qquad\text{if}\quad k<\min\{c,\ell(w)-2\}\\
1\qquad\text{if}\quad k=c\quad\text{or}\quad \ell(w)-1\\
0\qquad\text{if}\quad k>c.
\end{cases}$$
If $w'\in W\backslash W_{\{s,t\}},$ then it is uniquely determined by its parabolic decomposition $w'=u'v'$ where $u'\leq u, v'\leq v$ and $v'$ is nontrivial in $W^{\{s,t\}}.$  Hence $$W\backslash W_{\{s,t\}}\simeq  [e,u]\times([r,v]\cap W^{\{s,t\}}).$$  This gives that
$$|M_k\cap W\backslash W_{\{s,t\}}|=\begin{cases}
2k-1\hspace{.9in}\text{if}\quad k\leq \min\{\ell(u),\ell(v)\}\\
2\ell(v)\hspace{1in}\text{if}\quad \ell(v)<k\leq \ell(u)\\
2\ell(u)\hspace{1in}\text{if}\quad \ell(u)<k\leq \ell(v)\\
2\ell(w)-2k+1\qquad\text{if}\quad k\geq \max\{\ell(u),\ell(v)\}
\end{cases}$$
In the case that $\ell(u)=\ell(v)=c-1,$
we have that
$$P_w(q)=[\ell(w)+1]_q+q^{c+1}+\sum_{k=1}^{c} 2q^{k}[\ell(w)-2k+1]_q$$
In particular, if we write $P_w(q)=\sum a_iq^i,$ then we have that
$$a_i=a_{\ell(w)-i}=2i+1$$ for $i\leq c-3$ and
$$a_{c-2}=2c-1,\qquad\qquad a_{c}=2c-2.$$
Hence $P_w(q)$ is $(c-2)$-palindromic but not palindromic.  For example, if we take $c=4$ and $w=uv=(sts)(rts),$ then
$$[e,w]\cap W_{\{s,t\}}=\{e,s,t,st,ts,tst,sts,stst\}$$ and
$$[e,w]\cap W\backslash W_{\{s,t\}}=[e,u]\cdot ([r,v]\cap W^{\{s,t\}})=\{e,s,t,st,ts,sts\}\cdot\{r,rt,rts\}.$$  In this case, the Poincar\'{e} polynomial $P_w(q)=1+3q+5q^2+7q^3+6q^4+3q^5+q^6$ is 2-palindromic but not 3-palindromic.\end{proof}

It is tempting to conjecture that, for the triangle groups $(2,b,c)$ as in Proposition
\ref{P:HQprop1}, all $(c-1)$-palindromic elements are palindromic.  However for triangle group
$(2,3,5)$ (Coxeter type $H_3$ with $c=5$) there is a unique length 14 element which is
4-palindromic but not palindromic given by $w=t s r t s r t s r t s r t r.$


Theorem \ref{T:mainchar} states that any Grassmannian factorization of a $2$-palindromic element
$w\in W$ is also a BP-decomposition if $W$ avoids triangles in $\Tria$.  This statement is not true
for Coxeter groups which contain triangles in $\Tria$.

\begin{prop}\label{P:HQprop2}
Let $W$ be a Coxeter group.  Then $W$ avoids all triangle groups in $\Tria$ if and only if every
Grassmannian factorization $w=uv$ where $w$ is palindromic is a BP-decomposition.
\end{prop}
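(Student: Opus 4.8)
The plan is to prove the two implications separately. The forward direction is essentially free, and all the work is in the converse.

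\medskip

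For the forward direction, suppose $W$ avoids all triangle groups in $\Tria$ and let $w$ be palindromic. Then $w$ is in particular $2$-palindromic, so Theorem \ref{T:mainchar} applies to every Grassmannian factorization $w=uv$ and tells us it is a BP-decomposition. (Theorem \ref{T:mainchar} gives this for all $2$-palindromic $w$, which is more than we need.)

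\medskip

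For the converse I would argue the contrapositive: if $W$ contains some triangle in $\Tria$, I will exhibit a palindromic $w$ having a Grassmannian factorization that is \emph{not} a BP-decomposition. Say $\{r,s,t\}\subseteq S$ generates a triangle group with $m_{rs}=2$, $m_{rt}=b$, $m_{st}=c$, where $b,c\geq 3$ and $b<\infty$ (relabel $r,s$ so that the finite non-commuting edge is $m_{rt}$, so $t$ is the central vertex and $r$ is shared by the commuting edge and the finite edge). Since Bruhat intervals---and hence Poincar\'{e} polynomials, supports, descent sets, and the notions of Grassmannian factorization and BP-decomposition---are all inherited by the standard parabolic subgroup $W_{\{r,s,t\}}$, it suffices to produce the desired $w$ inside the triangle group $(2,b,c)$ itself. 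The construction will use only the finite edge $b$ together with the commutation $m_{rs}=2$, so it applies uniformly whether $c$ is finite or infinite.

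\medskip

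The element I propose is
\[
w = \hat u\,\hat v,\qquad \hat u = tr,\qquad \hat v = s\,\underbrace{trtr\cdots}_{b-1},
\]
a reduced word of length $b+2$. First I would verify that $\hat u\hat v$ is the parabolic decomposition of $w$ with respect to $J=\{r,t\}$: since $\hat v$ begins with $s$ immediately followed by $t$, and $s$ commutes with $r$ but not with $t$, one computes $D_L(\hat v)=\{s\}$, so $\hat v\in W^{\{r,t\}}$, and peeling left descents confirms $\hat u=tr$ is the $W_{\{r,t\}}$-part. As $\supp(\hat u)=\{r,t\}$ and $|\supp(w)|=3=|\supp(\hat u)|+1$, this is a Grassmannian factorization. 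It fails to be a BP-decomposition by Lemma \ref{L:BPlemma}: because $b-1\geq 2$, the subword $\underbrace{trtr\cdots}_{b-1}$ contains both $r$ and $t$, so $\supp(\hat v)\cap\{r,t\}=\{r,t\}$, whereas $D_R(\hat u)=D_R(tr)=\{r\}$, and $\{r,t\}\not\subseteq\{r\}$.

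\medskip

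It remains to verify that $w$ is palindromic, which I expect to be the crux. I would compute $P_w(q)$ by partitioning $[e,w]$ along the cosets of $W_{\{r,t\}}$, exactly as in the proof of Proposition \ref{P:HQprop1}. The key simplification is that $s$ occurs exactly once in the reduced word, so every $x\leq w$ with $s\in\supp(x)$ factors uniquely through that occurrence; this makes the enumeration of $[e,w]\setminus W_{\{r,t\}}$ independent of $c$, since only $st$, $ts$, and $tst$ arise as $\{s,t\}$-subwords, all reduced because $c\geq 3$. The $\{r,t\}$-content of $w$ is an alternating word of length $b+1$ and hence contains the longest element of $W_{\{r,t\}}$, so $[e,w]\cap W_{\{r,t\}}$ is all of $W_{\{r,t\}}$, contributing $(1+q)[b]_q$; the complementary part, via the bijection $[e,w]\setminus W_{\{r,t\}}\cong [e,\hat u]\times\bigl([s,\hat v]\cap W^{\{r,t\}}\bigr)$, contributes $q(1+q)[2]_q[b]_q$. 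Adding these gives
\[
P_w(q) = (1+q)[b]_q\bigl(1+q[2]_q\bigr) = (1+q)[b]_q[3]_q,
\]
a product of $q$-integers, which is palindromic. This produces the required palindromic element with a non-BP Grassmannian factorization and completes the contrapositive. The one delicate point in carrying this out is justifying the product decomposition of the coset count; this is exactly where the single occurrence of $s$ and the commutation $m_{rs}=2$ are used, and it is also what lets the whole argument bypass the infinite dihedral subgroup $W_{\{s,t\}}$ when $c=\infty$.
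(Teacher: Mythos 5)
Your proposal is correct and takes essentially the same route as the paper: the forward direction is immediate from Theorem \ref{T:mainchar}, and for the converse the paper likewise exhibits, inside a $(2,b,c)$ triangle group from $\Tria$, the palindromic element of Equation \eqref{E:HQexample} with $\ell(u)=2$ and $\ell(v)$ equal to the finite edge label, whose Grassmannian factorization is not a BP-decomposition---up to relabeling the generators this is exactly your $w=\hat u\hat v$. Your explicit coset-partition computation giving $P_w(q)=(1+q)[3]_q[b]_q$ is precisely the verification the paper leaves as ``easy to check,'' imported from the proof of Proposition \ref{P:HQprop1}.
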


\begin{proof}
By Theorem \ref{T:mainchar}, it suffices to show that for triangle groups $(2,b,c)$ as in
Proposition \ref{P:HQprop1} there are Grassmannian factorizations $w=uv$ of palindromic $w$ which
are not BP-decompositions. Consider $w=uv$ as in Equation \eqref{E:HQexample} with $\ell(u)=2$ and
$\ell(v)=c=m_{st}.$  It is easy to check that $w$ is palindromic and that $w=uv$ is a Grassmannian
factorization with respect to $J=\{s,t\}$ but not a BP-decomposition.
\end{proof}


\section{Descent sets of triangle avoiding groups}\label{S:descents}




In this section, we prove several basic properties of Coxeter groups which avoid triangle groups in
$\Tria.$  We begin with a lemma that holds for all Coxeter groups:
\begin{lemma}\label{L:sdescent}
    Let $W$ be a Coxeter group and $u\in W$. If $s \notin D_L(u)$, then $D_L(su) \setminus \{s\}$ consists of the
    elements $t \in D_L(u)$ such that $u$ has a reduced factorization starting
    with a braid $tsts \cdots$ of length $m_{st}-1$. (If $m_{st} = 2$ then this
    braid consists of only one element.)  In other words,
\begin{equation*}
    D_L(su) = \{s\} \cup \{t \in D_L(u) : u = u_0 u_1, u_0 \in W_{\{s,t\}},
        u_1 \in W^{\{s,t\}}, \ell(u_0) = m_{st} - 1\}.
\end{equation*}\end{lemma}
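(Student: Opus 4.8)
The plan is to prove this statement about left descent sets using the Exchange Condition and basic properties of parabolic decompositions with respect to the dihedral parabolic $W_{\{s,t\}}$.

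Let me think about what's being claimed. We have $u \in W$ with $s \notin D_L(u)$, so $\ell(su) = \ell(u) + 1$. We want to characterize $D_L(su)$.

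First, $s \in D_L(su)$ trivially since $\ell(s(su)) = \ell(u) < \ell(su)$.

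Now for $t \neq s$: when is $t \in D_L(su)$? We need $\ell(t s u) < \ell(su) = \ell(u)+1$.

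The claim is that $D_L(su) = \{s\} \cup \{t \in D_L(u) : \text{the } W_{\{s,t\}}\text{-part of } u \text{ has length } m_{st}-1\}$.

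Let me verify this via parabolic decomposition w.r.t. $\{s,t\}$: write $u = u_0 u_1$ with $u_0 \in W_{\{s,t\}}$, $u_1 \in W^{\{s,t\}}$.

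Key facts: Left descents of $u$ in $\{s,t\}$ are determined by $u_0$. Also $\ell(su) = \ell(u)+1$ means $su = (su_0)u_1$ with $\ell(su_0) = \ell(u_0)+1$ (since $u_1 \in W^{\{s,t\}}$, left multiplication by $s$ acts on $u_0$).

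So I need to reduce to the dihedral group $W_{\{s,t\}}$. In a dihedral group, the element $u_0$ is a word in $s,t$. Since $s \notin D_L(u)$, and left descents of $u$ restricted to $\{s,t\}$ equal left descents of $u_0$, we have $s \notin D_L(u_0)$, so $u_0$ starts with $t$ (if nonempty). Then $t \in D_L(u_0) = D_L(u)\cap\{s,t\}$... wait need $t\in D_L(u)$.

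For $t \in D_L(su)$: $su = (su_0)u_1$, and $t \in D_L(su)$ iff $t \in D_L(su_0)$ (dihedral reduction). Now $su_0 = s \cdot (t s t \cdots)$. In dihedral group, $t\in D_L(su_0)$ iff $su_0$ starts with $t$, i.e. $su_0 = t\cdots$, i.e. $s\cdot u_0$ when prepended by $s$ gives a word starting in $t$ — this happens precisely when $su_0 = t s t \cdots$, meaning $u_0 = s t s\cdots$? No wait $s\notin D_L(u_0)$ so $u_0 = tst\cdots$. Then $su_0 = stst\cdots$ which starts with $s$, so $t\notin D_L(su_0)$ unless... hmm, unless $su_0 = w_0$ the longest element where both $s,t$ are descents. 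That's exactly when $\ell(su_0) = m_{st}$, i.e. $\ell(u_0) = m_{st}-1$! Good, this is the condition.

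\begin{proof}
Since $s \notin D_L(u)$, we have $\ell(su) = \ell(u) + 1$, and clearly $s \in D_L(su)$. It remains to determine which $t \neq s$ lie in $D_L(su)$.

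Fix $t \neq s$ and take the parabolic decomposition $u = u_0 u_1$ of $u$ with respect to $\{s,t\}$, so that $u_0 \in W_{\{s,t\}}$, $u_1 \in W^{\{s,t\}}$, and $\ell(u) = \ell(u_0) + \ell(u_1)$. For any $x \in \{s,t\}$ we have $xu = (xu_0)u_1$, and since $u_1 \in W^{\{s,t\}}$, this remains a parabolic decomposition with respect to $\{s,t\}$; in particular $\ell(xu) = \ell(xu_0) + \ell(u_1)$, so $x \in D_L(u)$ if and only if $x \in D_L(u_0)$, and likewise $x \in D_L(su)$ if and only if $x \in D_L(su_0)$. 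Thus we may work entirely inside the dihedral group $W_{\{s,t\}}$ with the element $u_0$.

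Since $s \notin D_L(u)$ gives $s \notin D_L(u_0)$, the element $u_0$ is either trivial or has a reduced word of the form $tst\cdots$ (beginning with $t$). Write $\ell(u_0) = \ell$, so $u_0 = \underbrace{tst\cdots}_{\ell}$ with $0 \leq \ell \leq m_{st} - 1$ (if $m_{st} = \infty$, no upper bound applies). Then $su_0 = \underbrace{stst\cdots}_{\ell+1}$, a reduced word of length $\ell + 1$ beginning with $s$. In the dihedral group $W_{\{s,t\}}$, an element $y$ satisfies $t \in D_L(y)$ precisely when $y$ admits a reduced word beginning with $t$, which for $su_0$ occurs if and only if $su_0$ is the longest element $w_0 = \underbrace{stst\cdots}_{m_{st}} = \underbrace{tsts\cdots}_{m_{st}}$ of $W_{\{s,t\}}$. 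This happens exactly when $\ell + 1 = m_{st}$, i.e. $\ell(u_0) = m_{st} - 1$. In that case $u_0 = \underbrace{tst\cdots}_{m_{st}-1}$ also begins with $t$, so $t \in D_L(u_0) = D_L(u) \cap \{s,t\}$, confirming $t \in D_L(u)$.

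Combining these observations, $t \in D_L(su)$ if and only if $t \in D_L(su_0)$, which holds if and only if $\ell(u_0) = m_{st} - 1$; and when this holds we automatically have $t \in D_L(u)$ and $u = u_0 u_1$ is the asserted reduced factorization with $u_0 \in W_{\{s,t\}}$ of length $m_{st}-1$ and $u_1 \in W^{\{s,t\}}$. This is exactly the stated characterization.
\end{proof}

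The main obstacle is the careful reduction to the dihedral parabolic: one must check that the left descents among $\{s,t\}$ of both $u$ and $su$ are detected by the $W_{\{s,t\}}$-component $u_0$ of the parabolic decomposition, which relies on the fact that left multiplication by $s$ or $t$ does not disturb the minimal coset representative $u_1 \in W^{\{s,t\}}$. Once inside the dihedral group, the computation is transparent, since descent sets there are read directly off reduced words.
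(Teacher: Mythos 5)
Your proof is correct, and it takes a genuinely different route from the paper's. The paper argues top-down: it sets $J = D_L(su)$ and invokes the standard fact (from Bj\"orner--Brenti) that the parabolic subgroup generated by a descent set is finite and that $su$ has a reduced factorization beginning with the longest element $w_0$ of $W_J$; then for any $t \in J \setminus \{s\}$ one has $m_{st}<\infty$ and $w_0$ (hence $su$) begins with the longest element $stst\cdots$ of $W_{\{s,t\}}$, so $u = s(su)$ begins with the braid $tst\cdots$ of length $m_{st}-1$. You instead argue bottom-up: you take the parabolic decomposition $u = u_0u_1$ with respect to the single dihedral parabolic $W_{\{s,t\}}$, check that left multiplication by $s$ or $t$ does not disturb the coset representative $u_1$, so that all descents among $\{s,t\}$ of $u$ and of $su$ are detected by $u_0$ and $su_0$ respectively, and then read the answer off reduced words in the dihedral group, where an element admitting reduced words starting with both letters must be the longest element. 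Your approach buys elementarity and completeness: it uses only basic properties of minimal coset representatives, and it establishes both inclusions explicitly, whereas the paper's two-sentence proof really only spells out the forward inclusion (that $t \in D_L(su)\setminus\{s\}$ forces the braid prefix), leaving the easy converse implicit. The paper's approach buys brevity, and the structural fact it leans on --- that $W_{D_L(w)}$ is finite and $w$ factors through its longest element --- is one the authors reuse anyway in the surrounding arguments (e.g.\ in Lemma \ref{L:finpara} and its application in Lemma \ref{L:stdescent}), so no net machinery is saved by avoiding it there.
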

\begin{proof}
    Let $J = D_L(su)$. Then by \cite{BB05}, $W_J$ is a finite Coxeter group and $su$ has a reduced factorization
    beginning with the maximal element $w_0$ of $W_J$. If $t$ is an element of
    $J \setminus \{s\}$, then $m_{st} < \infty$ and $w_0$ has a reduced
    decomposition starting with the longest element of $W_{\{s,t\}}$.
\end{proof}

We now consider Coxeter groups which avoid triangle groups in $\Tria.$

\begin{lemma}\label{L:finpara}
    If $W$ is a Coxeter group which avoids all triangle groups in $\Tria,$ then the
    only finite parabolic subgroups of $W$ are products of rank $2$ Coxeter groups.


In other words, if $J \subset S$ is such that $W_J$ is finite, then $J$ can be
written as a disjoint union $$J=\bigsqcup_{i} J_i,$$ where $|J_i| \leq 2$ for all
$i$, and $m_{st} = 2$ if $s\in J_i$, $t \in J_j$, $i \neq j$.\end{lemma}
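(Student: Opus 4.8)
The plan is to first reduce to the case where $W_J$ is irreducible, and then to bound its rank. Throughout I would use the opening observation that every $m_{st}$ occurring inside a finite parabolic $W_J$ is finite, since a bond $m_{st}=\infty$ already makes the dihedral subgroup $W_{\{s,t\}}$, and hence $W_J$, infinite.

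First I would set up the component decomposition. Associate to $J$ the diagram $\Gamma_J$ with vertex set $J$ and an edge $\{s,t\}$ precisely when $m_{st}\geq 3$, and let $J=\bigsqcup_i J_i$ be its connected components. Generators lying in distinct components commute, so $m_{st}=2$ whenever $s\in J_i$ and $t\in J_j$ with $i\neq j$; this is exactly the cross-component condition in the statement, and it also gives $W_J\cong\prod_i W_{J_i}$ (see \cite{BB05}). Since each $W_{J_i}$ is a parabolic subgroup of the finite group $W_J$, it is itself finite. It therefore remains to prove that each component satisfies $|J_i|\leq 2$.

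For the rank bound I would argue by contradiction, supposing some component has $|J_i|\geq 3$. Because $\Gamma_{J_i}$ is connected on at least three vertices, it contains a vertex $s$ of degree at least $2$; choose two distinct neighbors $t,u$, so that $m_{st},m_{su}\geq 3$, both finite. Now examine $m_{tu}$, which is finite by the opening remark. If $m_{tu}=2$, then $\{s,t,u\}$ realizes the triangle $(2,m_{st},m_{su})$, which lies in $\Tria$ since $m_{st}\geq 3$ is finite and $m_{su}\geq 3$, contradicting the avoidance hypothesis. If instead $m_{tu}\geq 3$, then $W_{\{s,t,u\}}$ is a rank-$3$ triangle group all of whose bonds are at least $3$, and such a group is infinite because $\tfrac{1}{m_{st}}+\tfrac{1}{m_{su}}+\tfrac{1}{m_{tu}}\leq 1$; this contradicts the finiteness of $W_{J_i}$. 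Either horn yields a contradiction, so $|J_i|\leq 2$, completing the proof.

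The genuine content sits in the rank-bound step, specifically in the dichotomy on $m_{tu}$: the case $m_{tu}\geq 3$ forces the standard fact that a Euclidean or hyperbolic triangle group (all bonds $\geq 3$) is infinite, while the case $m_{tu}=2$ is what manufactures a forbidden triangle. I expect the only real subtlety to be bookkeeping, namely matching the unordered triple $\{s,t,u\}$ to the ordered notation $(m_{rs},m_{rt},m_{st})$ and checking the $\Tria$ membership conditions, rather than anything structurally hard.
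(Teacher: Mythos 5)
Your proof is correct, but it follows a genuinely different route from the paper's. The paper disposes of the rank bound in one line by appealing to the classification of finite Coxeter groups: every finite \emph{irreducible} Coxeter group of rank $\geq 3$ contains a triangle from $\Tria$ (in each case of the classification the diagram is a tree with finite labels, so any path $t - s - u$ gives a triangle $(2,b,c)$ with $b,c \geq 3$ finite). You instead avoid the classification entirely: after the same component decomposition of the diagram (which both arguments need, and which gives the cross-component condition $m_{st}=2$ and $W_J \cong \prod_i W_{J_i}$), you run a local dichotomy at a degree-$\geq 2$ vertex of a component of size $\geq 3$. The case $m_{tu}=2$ manufactures a forbidden triangle in $\Tria$ (your relabeling to match the ordered-triple convention is fine, since both off-$2$ labels are finite), and the case $m_{tu}\geq 3$ contradicts finiteness via the standard criterion that a rank-$3$ Coxeter group with all labels $\geq 3$ is infinite, as $\frac{1}{m_{st}}+\frac{1}{m_{su}}+\frac{1}{m_{tu}} \leq 1$. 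What each approach buys: the paper's proof is shorter if one accepts the full classification as a black box, while yours is self-contained modulo the much lighter rank-$3$ finiteness criterion (positivity of the angle sum, equivalently positive-definiteness of the rank-$3$ bilinear form), and in effect it re-proves exactly the fragment of the classification that the lemma needs. Both are complete proofs of the statement.
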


\begin{proof}
    Using the classification of finite Coxeter groups, we see that every finite irreducible Coxeter group
    of rank $\geq 3$ contains a triangle group in $\Tria$.
\end{proof}

If $J = D_L(w),$ then $W_J$ is a finite Coxeter group.  In particular, Lemma \ref{L:finpara}
applies to the parabolic subgroups generated by descent sets of $\Tria$-avoiding Coxeter groups.  The following lemma is the main result of this section.

\begin{lemma}\label{L:stdescent}
    Let $(W,S)$ be a Coxeter group which avoids triangle groups in $\Tria$. Let $r,s \in S$ such
    that $3 \leq m_{rs} \leq \infty$, and suppose $u$ is an element of $W$ such
    that $(rs)u$ is a reduced factorization. Then
    \begin{equation*}
        D_L(rsu) \setminus \{r,s\} = \{t \in D_L(u) : m_{rt} = m_{st} = 2\}.
    \end{equation*}
\end{lemma}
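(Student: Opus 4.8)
The plan is to prove the two inclusions separately, the reverse inclusion being the substantial one.

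For the inclusion $\supseteq$, suppose $t\in D_L(u)$ with $m_{rt}=m_{st}=2$, so $t$ commutes with both $r$ and $s$. Writing $u=tu'$ with $\ell(u')=\ell(u)-1$, I would rewrite $rsu=rstu'=trsu'$ and check that this last word has length $\ell(rsu)$, hence is reduced, so $t\in D_L(rsu)$; since $m_{rt}=m_{st}=2$ forces $t\neq r,s$, this $t$ lies in $D_L(rsu)\setminus\{r,s\}$.

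For the inclusion $\subseteq$, I would start from $t\in D_L(rsu)\setminus\{r,s\}$ and apply Lemma~\ref{L:sdescent} twice. As $rsu$ and $su$ are reduced, $r\notin D_L(su)$ and $s\notin D_L(u)$. Applying Lemma~\ref{L:sdescent} to $r\cdot(su)$ shows $t\in D_L(su)$, $m_{rt}<\infty$, and that $su$ has a reduced factorization beginning with the braid $\underbrace{trtr\cdots}_{m_{rt}-1}$; applying it to $s\cdot u$ (using $t\neq s$) shows $t\in D_L(u)$, $m_{st}<\infty$, and that $u$ begins with $\underbrace{tsts\cdots}_{m_{st}-1}$, so the $\{s,t\}$-parabolic part of $su$ is the longest element of $W_{\{s,t\}}$ and $\{s,t\}\subseteq D_L(su)$. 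This already gives $t\in D_L(u)$ and $m_{rt},m_{st}<\infty$, so it remains only to force $m_{rt}=m_{st}=2$. If exactly one of $m_{rt},m_{st}$ is $\geq 3$, then the triangle $(m_{rs},m_{rt},m_{st})$ has a $2$ together with two entries $\geq 3$, one of which is finite, hence lies in $\Tria$, contradicting the hypothesis on $W$.

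The main obstacle is the remaining case $m_{rt},m_{st}\geq 3$, where the triangle has all entries $\geq 3$ and so is not excluded by $\Tria$-avoidance directly. Here the two structural facts above are in fact incompatible: the $\{r,t\}$-parabolic part $\underbrace{trt\cdots}_{m_{rt}-1}$ of $su$ has length $\geq 2$, so $tr$ left-divides $su$, i.e. $r\in D_L(t\cdot su)$, while the $\{s,t\}$-part of $su$ is the longest element of $W_{\{s,t\}}$. I would isolate this as a claim: if $m_{rs},m_{rt},m_{st}\geq 3$ then there is no $z$ with $\{s,t\}\subseteq D_L(z)$ and $r\in D_L(tz)$. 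The heart of the matter is that such a configuration reproduces itself at strictly smaller length. Taking a counterexample $z$ of minimal length, the $\{s,t\}$-part of $z$ being longest makes the $\{s,t\}$-part of $tz$ the second-longest element $\underbrace{sts\cdots}_{m_{st}-1}$, so $s\in D_L(tz)$ and hence $\{r,s\}\subseteq D_L(tz)$; since descent sets generate finite parabolic subgroups, the $\{r,s\}$-part of $tz$ is the longest element of $W_{\{r,s\}}$. Passing to $z'=s\cdot tz$, with $\ell(z')=\ell(z)-2$, stripping the leading $s$ turns the $\{r,s\}$-part into $\underbrace{rsr\cdots}_{m_{rs}-1}$ and the $\{s,t\}$-part into $\underbrace{tst\cdots}_{m_{st}-2}$, giving $\{r,t\}\subseteq D_L(z')$ together with $rs$ left-dividing $z'$, i.e. $s\in D_L(rz')$. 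This is the same forbidden configuration for the cyclically-permuted triple, but on the shorter element $z'$, contradicting minimality; since all three labels are $\geq 3$ the argument cycles symmetrically through the three pairs. (If some label equals $\infty$, the corresponding step instead places an infinite dihedral group inside a finite parabolic, an even quicker contradiction.) This settles the case $m_{rt},m_{st}\geq 3$ and completes the reverse inclusion.
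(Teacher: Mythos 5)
Your proof is correct, and it is worth comparing with the paper's, since the logical packaging is genuinely different even though the underlying mechanism is the same. The paper proves only the inclusion $\subseteq$ (the inclusion $\supseteq$ being immediate, as you observe) by induction on $\ell(u)$, with the lemma itself as the inductive statement: after applying Lemma \ref{L:sdescent} and organizing $D_L(su)$ into commuting rank-$\leq 2$ blocks via Lemma \ref{L:finpara}, the case $m_{st}=2$ is handled exactly as your mixed case, and the hard case $m_{rt},m_{st}\geq 3$ is dispatched in one stroke: writing $rsu=(rtr)u'$ reduced gives $su=tru'$, hence $s\in D_L(tru')\setminus\{t,r\}$ with $\ell(u')=\ell(u)-1$, and the induction hypothesis applied to the pair $(t,r)$ forces $m_{rs}=2$, contradicting $m_{rs}\geq 3$. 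You instead extract the hard case as a standalone claim --- if $m_{rs},m_{rt},m_{st}\geq 3$ then no $z$ has $\{s,t\}\subseteq D_L(z)$ and $r\in D_L(tz)$ --- proved by infinite descent ($z\mapsto stz$, length dropping by $2$, roles permuting cyclically), using only the braid prefixes supplied by Lemma \ref{L:sdescent} and the fact that left descent sets generate finite parabolic subgroups. The two descents are the same phenomenon (the paper's induction, unrolled, also cycles the roles $(r,s)\to(t,r)\to(s,t)$ while shrinking the element, just by $1$ per step instead of $2$), but your formulation buys a genuine clarification: the all-labels-$\geq 3$ configuration is impossible in an \emph{arbitrary} Coxeter group, so $\Tria$-avoidance enters only to exclude the mixed case where exactly one of $m_{rt},m_{st}$ equals $2$. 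The price is the bookkeeping you correctly flag: the claim must be stated symmetrically in the three generators and the minimal counterexample taken over all cyclic role assignments (with the $m_{rs}=\infty$ degenerate case noted separately), whereas the paper's induction on $\ell(u)$ absorbs all of this automatically by quantifying over all pairs $r,s$ and all shorter $u$.
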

\begin{proof}
    The proposition is obviously true if $u = e$. We proceed by induction on
    the length of $u$. Let $J = D_L(s u)$, and write $J = \bigsqcup J_i$ as in Lemma \ref{L:finpara}. We can further assume that if $J_i
    = \{x,y\},$ then $m_{xy}\geq 3$, and that $s \in J_0.$


    Now if $t\in D_L(r s u)\setminus\{r,s\}$, then by Lemma \ref{L:sdescent} we must
    have $m_{rt} < \infty$ and $rsu$ must have a reduced decomposition
    starting the longest element in $W_{\{r,t\}}.$  If $t \notin J_0$
    then $m_{st} = 2$. Since $W$ avoids triangle groups in $\Tria$,  we have that $m_{rt} = 2$ as well.


    This leaves the possibility that $t \in J_0$, in which case $m_{st}\geq 3.$ Once again, since $W$ avoids triangle groups in $\Tria$, we conclude that
    $m_{rt} \geq 3$. Thus $r s u$ has a reduced factorization $r s u = (rtr)u'$,
    where $\ell(u') = \ell(u)-1$. Now $tr u' = su$, so $s \in D_L(tru')$. But by induction, this implies that $m_{ts} = m_{rs} = 2$, which is a contradiction.  Hence $t\notin J_0.$
\end{proof}

\section{Proof of Theorem \ref{T:mainchar}}\label{S:Proof}
We now prove Theorem \ref{T:mainchar}.  The following assumptions are fixed for the remainder of the section. Let $W$ be
a Coxeter group that avoids all triangle groups in $\Tria.$  Let $w\in W$ be $2$-palindromic with a
Grassmannian factorization $w=uv$ with respect to $J=\supp(u).$  By Lemma \ref{L:2palcondition} we
have that
\begin{equation}\label{E:oneguy}|u\cdot\pred(v)\cap\pred(w)|=1.\end{equation}
This implies \begin{equation}\label{E:onepred}|W^J\cap \pred(v)|=1,\end{equation} and in
particular, $|D_R(v)|=1.$  Let $z\in D_R(v)$ denote this unique simple reflection.  The element
$vz$ is the unique element in $W^J\cap \pred(v).$


We divide the proof into three steps.  The first step is to prove that $\supp(v)$ has at most three
elements.  Second, we prove the characterization of $v$ when $\supp(v)$ has exactly three elements.
For the last step, we show that $w=uv$ is BP-decomposition.  We begin with the following technical
lemma.

\begin{lemma}\label{L:prefix}
Let $s_{1},\ldots, s_k$ be the longest sequence of distinct simple reflections such that
$v$ has a reduced decomposition
\begin{equation*}
    v = s_{1} \cdots s_{k} v',
\end{equation*}
and for all $j < k$, $m_{s_j s_{j+1}} \geq 3$. For any $1\leq j\leq k,$ define the set
$I_j:=\{s_{1},\ldots,s_{j}\}.$  Then:

\begin{enumerate}

\item $s_{j}\cdots s_{k}v'\in W^{I_{j-1}}$ for all $j\leq k$, and

\smallskip

\item $\supp(v')\subseteq \{s_{1},\ldots, s_{k}\}.$

\end{enumerate}
\end{lemma}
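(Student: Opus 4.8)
The plan is to prove both parts by analyzing the left descent sets of the partial products $x_j := s_j\cdots s_k v'$ (so that $x_1=v$ and $x_{k+1}=v'$), using that each factorization $x_j = s_j x_{j+1}$ is reduced, hence $s_j\in D_L(x_j)$ while $s_j\notin D_L(x_{j+1})$. The two facts driving everything are the Grassmannian hypothesis, which forces $D_L(v)=\{s_1\}$, and the two descent lemmas (Lemma~\ref{L:sdescent} and Lemma~\ref{L:stdescent}) together with Lemma~\ref{L:finpara}, which restrict how new descents can be created by left multiplication once $W$ avoids $\Tria$.

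For part (1) I would induct on $j$, the assertion $x_j\in W^{I_{j-1}}$ being equivalent to $D_L(x_j)\cap I_{j-1}=\emptyset$. The base case $j=1$ is vacuous, and in the inductive step the only new thing to rule out is $s_i\in D_L(x_{j+1})$ for some $i<j$ (the case $i=j$ being immediate from $s_j\notin D_L(x_{j+1})$). Applying Lemma~\ref{L:sdescent} to $x_j = s_j x_{j+1}$ shows that a descent $t\in D_L(x_{j+1})$ survives into $D_L(x_j)$ exactly when $x_{j+1}$ has a reduced word beginning with the braid $t s_j t s_j\cdots$ of length $m_{s_j t}-1$; since $s_i\notin D_L(x_j)$ by induction, an offending $s_i\in D_L(x_{j+1})$ cannot begin such a braid. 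When $m_{s_i s_j}=2$ this braid is just $s_i$, so $s_i\notin D_L(x_{j+1})$, a contradiction that disposes of the commuting case at once.

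The main obstacle is the non-commuting case $m_{s_i s_j}\ge 3$. Here the local triangle $\{s_i,s_j,s_{j+1}\}$ alone does not forbid the configuration: in $\tilde A_2=(3,3,3)$ one can write an element $g$ with $D_L(g)=\{s_i,s_{j+1}\}$, $s_j\notin D_L(g)$, that avoids the braid $s_is_j\cdots$, so triangle-avoidance by itself is insufficient. What rescues the argument is the global hypothesis $D_L(v)=\{s_1\}$: a forbidden descent $s_i\in D_L(x_{j+1})$ must be transported back up the chain $x_{j+1},x_j,\ldots,x_1=v$ by braid relations, producing a left descent of $v$ distinct from $s_1$. (Concretely, in the smallest case $s_1\in D_L(x_3)$ forces $x_2$ to begin $s_2s_1$, whence $v=s_1x_2$ begins $s_1s_2s_1=s_2s_1s_2$ and $s_2\in D_L(v)$, contradicting Grassmannianity.) Carrying out this transport in general is where triangle-avoidance re-enters: Lemma~\ref{L:stdescent} and the fact that descent sets generate products of rank-$\le 2$ groups (Lemma~\ref{L:finpara}) control exactly which braid moves are available along the chain, and I expect the bookkeeping here --- choosing the offending index and propagating the extra descent without it being re-absorbed --- to be the technical heart of the proof.

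For part (2) I would use the maximality of the sequence $s_1,\ldots,s_k$ together with part (1). By part (1) we have $D_L(x_k)\cap\{s_1,\ldots,s_{k-1}\}=\emptyset$, and since the sequence cannot be extended, every $t\in D_L(v')$ lying outside $\{s_1,\ldots,s_k\}$ must satisfy $m_{s_k t}=2$. To upgrade this from descents to the full support, I would take a would-be foreign generator $t\in\supp(v')\setminus\{s_1,\ldots,s_k\}$ occurring as early as possible and argue, again via Lemma~\ref{L:stdescent} and triangle-avoidance, that its presence either allows the chain to be lengthened (contradicting maximality) or creates a descent forbidden by part (1). This yields $\supp(v')\subseteq\{s_1,\ldots,s_k\}$.
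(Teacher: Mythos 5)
Your proposal has a genuine gap: it never uses the $2$-palindromicity of $w$, and the statement you are actually trying to prove --- that the lemma's conclusion follows from $\Tria$-avoidance together with the single-descent property $D_L(v)=\{s_1\}$ --- is false. Concretely, let $W$ be the $(3,3,4)$ triangle group, $S=\{r,s,t\}$ with $m_{rt}=m_{st}=3$ and $m_{rs}=4$ (this group avoids $\Tria$), and take $v=rstrt$, $J=\{s,t\}$, $u=st$, so that $w=uv$ is a Grassmannian factorization. The only reduced words of $v$ are $rstrt$ and $rsrtr$, so $v$ is reduced with $D_L(v)=\{r\}$. The longest prefix sequence is $(s_1,s_2,s_3)=(r,s,t)$ with $v'=rt$, yet $s_3v'=trt=rtr$ has $r=s_1$ as a left descent, so part (1) fails at $j=3$. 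This example pinpoints where your ``transport'' mechanism breaks: the offending descent $r\in D_L(x_3)$ is absorbed, not propagated, when you multiply by $s_2=s$, because by Lemma~\ref{L:sdescent} it would survive into $D_L(x_2)$ only if $x_3=trt$ had a reduced word beginning with the braid $rsr$ of length $m_{rs}-1=3$, which it does not; indeed $D_L(x_2)=D_L(strt)=\{s\}$ and $D_L(v)=\{r\}$, so no second descent of $v$ is ever created. Your parenthetical ``smallest case'' computation silently assumes $m_{s_1s_2}=3$ (so that $s_1s_2s_1$ is a full braid); for $m_{s_1s_2}\geq 4$ the word $s_1s_2s_1$ is not a braid and Grassmannianity is never contradicted.

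The hypothesis that rescues the lemma is the one fixed at the start of Section~\ref{S:Proof}: since $w$ is $2$-palindromic, Lemma~\ref{L:2palcondition} gives $|u\cdot\pred(v)\cap\pred(w)|=1$, hence $|W^J\cap\pred(v)|=1$ (Equation~\eqref{E:onepred}), and it is this uniqueness of the predecessor, not $|D_L(v)|=1$, that drives the paper's proof. For part (1) the paper considers the parabolic decomposition $v=v_jv_j'$ with respect to $I_j$ and shows $\ell(v_j)=j$: otherwise $v_j$ has a right descent $s$ with $\supp(v_js)=\supp(v_j)$, and Lemma~\ref{L:stdescent} forces $D_L(v_jsv_j')=D_L(v)$, so that $v_jsv_j'$ is a second element of $W^J\cap\pred(v)$, contradicting \eqref{E:onepred}. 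Part (2) is proved the same way, exhibiting $s_1\cdots s_{k-1}v'$ (delete $s_k$) as a second element of $W^J\cap\pred(v)$, via Lemma~\ref{L:stdescent} when $k\geq 3$ and via $m_{s_1s_2}=m_{s_1r}=\infty$, forced by $\Tria$-avoidance, when $k=2$. In the counterexample above this is exactly what fails: $W^J\cap\pred(rstrt)=\{rsrt,\ rstr\}$ has two elements, so that $v$ can never occur under the lemma's standing hypotheses --- but nothing in your proposal can detect this, because nothing in your proposal ever looks at $\pred(v)$.
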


\begin{proof}Clearly the lemma is true if $\ell(v)=1$ and hence we assume that $\ell(v)\geq 2.$  Observe that $k\geq 2,$ otherwise $v\notin W^J.$
For any $j\leq k,$ let $$v=v_jv_j'$$ be a parabolic decomposition with respect to $W_{I_j}.$  It is easy to see that  $v_1=s_1$ and hence $v_1'=s_{2}\cdots s_{k}v'\in W^{I_{1}}.$

\smallskip

Now let $j\geq 2$ and suppose that $\ell(v_j)> j.$  Then there exists $s\in D_R(v_j)$ such that
$$\supp(v_js)=\supp(v_j).$$  By Lemma \ref{L:stdescent}, we have that the left descent sets
$$D_L(v_jsv_j')=D_L(v_js)\cup\{t\in D_L(v_j')\ |\ m_{ts_i}=2\ \text{for}\ i\leq j\}$$ and
$$D_L(v)=D_L(v_jv_j')=D_L(v_j)\cup\{t\in D_L(v_j')\ |\ m_{ts_i}=2\ \text{for}\ i\leq j\}.$$  Since $\supp(v_js)=\supp(v_j),$ the descent sets above are equal.  Hence $v_jsv_j'\in W^J\cap \pred(v).$  If $j<k,$ then $v_j'\neq e$ and consequently $v_jsv_j'\neq vz$, contradicting Equation \eqref{E:onepred}.  Thus $\ell(v_j)=j$ which implies
that $v_j=s_1\cdots s_j$ and $v_j'=s_{j+1}\cdots s_{k}v'\in W^{I_{j}}.$  This proves part (1) of the lemma.


For part (2), suppose that $\supp(v')\nsubseteq \{s_{1},\ldots, s_{k}\}.$  Then $|\supp(v)|>k$ and
$v_k'\neq e.$  We get that $v_k=s_1\cdots s_k$ and $$D_L(v_k')\cap \{s_1,\cdots,s_k\}=\emptyset$$
since $v_k'\in W^{I_k}.$  By the maximality of $k$, we have that $m_{s_k,r}=2$ for all $r\in D_L(v_k').$  We claim that
$$D_L(s_1\cdots s_{k-1}v')=D_L(v).$$  Indeed, if $k\geq 3,$ this follows from Lemma
\ref{L:stdescent}.  Otherwise, if $k=2,$ then $$m_{s_1s_2}=m_{s_1r}=\infty$$ since $W$ avoids all
triangle groups in $\Tria.$  This proves the claim when $k=2.$  In either case we have that
$$s_1\cdots s_{k-1}v'\in W^J\cap\pred(v)$$ which contradicts Equation \eqref{E:onepred}.  Therefore
$\supp(v')\subseteq \{s_{1},\ldots, s_{k}\}.$
\end{proof}

The following proposition completes the first step of in the proof of Theorem \ref{T:mainchar}.

\begin{prop}\label{P:lessthan3}We have that $|\supp(v)|\leq 3.$  Furthermore, if $|\supp(v)|= 3,$ then $\supp(v)$ generates a triangle group $(a,b,c)$ with $a,b,c\geq 3.$  \end{prop}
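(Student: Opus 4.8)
The plan is to recast the uniqueness statement \eqref{E:onepred} as a condition on left descent sets and then to violate it whenever $|\supp(v)|\geq 4$ (or whenever the rank-$3$ labels degenerate). By Lemma \ref{L:prefix} we may write $\supp(v)=\{s_1,\dots,s_k\}$, where $s_1$ is the unique left descent of $v$ and the prefix $s_1\cdots s_k$ is a chain with $m_{s_js_{j+1}}\geq 3$ for $j<k$. Because $w=uv$ is a Grassmannian factorization, $\supp(v)\setminus J=\{s_1\}$, so $s_2,\dots,s_k\in J$ while $s_1\notin J$. We may assume $\ell(v)\geq 2$, the case $\ell(v)\leq 1$ being immediate. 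If $x\in\pred(v)$ then $\supp(x)\subseteq\supp(v)$, hence $D_L(x)\subseteq\{s_1,\dots,s_k\}$; since $x\in W^J$ is equivalent to $D_L(x)\cap J=\emptyset$ and $s_1$ is the only generator of $\supp(v)$ outside $J$, I obtain the reformulation
\[
    \pred(v)\cap W^J=\{x\in\pred(v):D_L(x)=\{s_1\}\}.
\]
Thus \eqref{E:onepred} asserts that $vz$ is the \emph{only} element of $\pred(v)$ whose left descent set equals $\{s_1\}$, and the proposition reduces to producing a second such element once the chain is too long.

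Next I would use $\Tria$-avoidance to rigidify the chain. Assuming first that all chain labels $m_{s_js_{j+1}}$ are finite, an induction on $j-i$ shows that \emph{every} pair satisfies $m_{s_is_j}\geq 3$: the base case is the chain itself, and for the inductive step the triple $\{s_i,s_{i+1},s_j\}$ has the two edges $m_{s_is_{i+1}}$ and $m_{s_{i+1}s_j}$ both $\geq 3$ with $m_{s_is_{i+1}}<\infty$, so $m_{s_is_j}=2$ would produce a triangle in $\Tria$. In particular there are no commutation relations among $s_1,\dots,s_k$. I would then manufacture a second coatom from an interior node: for $2\leq j\leq k-1$ the reflection $s_1\cdots s_{j-1}\,s_j\,s_{j-1}\cdots s_1$ carries $v$ to the element obtained by deleting $s_j$ from the reduced word $s_1\cdots s_kv'$, and Lemma \ref{L:stdescent}, applied repeatedly from the left along the commutation-free chain, forces its left descent set to be exactly $\{s_1\}$. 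Since this element differs from $vz$ in the interior of the chain, it is a second member of $\pred(v)\cap W^J$, contradicting the reformulated \eqref{E:onepred}; hence $k\leq 3$.

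For the final assertion let $|\supp(v)|=3$ with $\supp(v)=\{s_1,s_2,s_3\}$ and $m_{s_1s_2},m_{s_2s_3}\geq 3$; I must show $m_{s_1s_3}\geq 3$. If $m_{s_1s_3}=2$, then $\{s_1,s_2,s_3\}$ generates the triangle $(2,m_{s_1s_2},m_{s_2s_3})$ whose two nontrivial labels are $\geq 3$; avoidance of $\Tria$ rules this out unless $m_{s_1s_2}=m_{s_2s_3}=\infty$, i.e.\ the triangle is $(2,\infty,\infty)\notin\Tria$. In that remaining configuration $s_1$ and $s_3$ commute, and I would again exhibit a second coatom with left descent set $\{s_1\}$ --- now using the commutation of $s_1$ and $s_3$ in place of an interior braid --- to contradict \eqref{E:onepred}. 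Therefore $m_{s_1s_3}\geq 3$, so $\supp(v)$ generates a triangle group $(a,b,c)$ with $a,b,c\geq 3$.

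The main obstacle is the coatom construction in the two reduction steps: one must verify that the modified word is genuinely \emph{reduced} of length $\ell(v)-1$ (rather than a shorter element), that its left descent set is exactly $\{s_1\}$, and that it is distinct from $vz$. The reducedness hinges on whether the tail $v'$ reintroduces $s_1$ as a left descent of $s_{j+1}\cdots s_kv'$, and it is precisely here that the extra interior nodes available when $k\geq 4$ provide the needed flexibility, while Lemma \ref{L:stdescent} together with $\Tria$-avoidance controls which braids --- and hence which spurious left descents --- can occur. The infinite-label configurations must be treated as separate cases, using that an $m_{st}=\infty$ bond admits no braid move, so that the corresponding deletion is automatically length-reducing by exactly one.
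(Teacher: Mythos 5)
Your reduction of the problem to \eqref{E:onepred} is exactly where the argument breaks. The reformulation itself is correct ($\pred(v)\cap W^J$ is the set of coatoms of $v$ whose left descent set is $\{s_1\}$), but \eqref{E:onepred} is strictly weaker than the $2$-palindromicity condition \eqref{E:oneguy}, and the discarded information is what the decisive case requires. Concretely, take the configuration where consecutive chain generators satisfy $m_{s_js_{j+1}}=\infty$ and all other pairs commute; this avoids $\Tria$, and for $v=s_1s_2s_3s_4$ the coatoms are $s_2s_3s_4$, $s_1s_3s_4$, $s_1s_2s_4$, $s_1s_2s_3$, with left descent sets $\{s_2\}$, $\{s_1,s_3\}$, $\{s_1,s_4\}$, $\{s_1\}$ respectively. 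So $v$ has a \emph{unique} coatom in $W^J$: no ``second coatom'' exists, \eqref{E:onepred} is satisfied, and your strategy cannot yield a contradiction. This is not a fringe case: the paper's own induction (using Lemmas \ref{L:prefix} and \ref{L:stdescent}) shows that when $|\supp(v)|\geq 4$ the element structure forces $m_{s_is_j}=2$ for $|i-j|\geq 2$, and then $\Tria$-avoidance forces $m_{s_js_{j+1}}=\infty$; likewise the residual rank-$3$ case is exactly $(2,\infty,\infty)$. Thus the commuting/infinite configuration is the \emph{only} one surviving to the final step, and it is precisely the one you defer to ``separate cases'' without an argument. In this configuration the paper gets its contradiction from \eqref{E:oneguy}, not \eqref{E:onepred}: writing $u=u_1u_0$ with $u_0\in W_{\supp(v)}$, every coatom $p\neq vz$ of $v$ --- including those \emph{not} in $W^J$, such as $s_2s_3\cdots s_kv'$ and $s_1s_3s_4\cdots s_kv'$ --- must have $u\cdot p$ non-reduced; since $W_{\supp(v)}$ has only commuting and infinite relations, this forces $\{s_2,s_3\}\subseteq D_R(u_0)$, hence $m_{s_2s_3}<\infty$ (descent sets generate finite parabolics), contradicting $m_{s_2s_3}=\infty$. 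Your rank-$3$ claim that the commutation of $s_1$ and $s_3$ produces a second coatom in $W^J$ fails for the same reason: deleting $s_2$ gives an element whose left descent set contains $s_3$, so it is not in $W^J$.

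A secondary mismatch: the case you spend most of your effort on --- all chain labels finite --- is vacuous once $|\supp(v)|\geq 4$, since non-consecutive chain generators are forced to commute by the structure of $v$, and a commutation together with a finite chain label creates a triangle in $\Tria$. Even within that case, the two points you flag as ``the main obstacle'' (reducedness of the deletion word, and that its left descent set is exactly $\{s_1\}$) are the actual mathematical content; the paper derives them from Lemma \ref{L:prefix}(1) (the suffixes $s_j\cdots s_kv'$ lie in $W^{I_{j-1}}$, so the deletion products are parabolic decompositions, hence reduced) combined with Lemma \ref{L:stdescent}, and your sketch supplies no substitute. As written, the proposal proves the proposition in a case that cannot occur and omits the case that does.
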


\begin{proof}
Suppose $|\supp(v)| \geq 4$ and let $v=s_{1}\cdots s_{k}v'$ as in Lemma \ref{L:prefix}.  We first show
by induction on $j$ that

\begin{enumerate}\item $D_L(s_j \cdots s_k v') = \{s_j\}$

\smallskip

\item $m_{s_i s_j}=2$ for $i\in\{1,\ldots,j-2\}$. \end{enumerate}

Indeed, part (1) is trivial for $j=k$.  Suppose part (1) is true for some $j \leq k$. Now by Lemma
\ref{L:prefix}, $s_1 \cdots s_{j-2} s_{j} \cdots s_k v'$ is reduced, and therefore is not an element of $W^J.$  So by Lemma \ref{L:stdescent}, we have that $s_j \in D_L(s_1 \cdots s_{j-2} s_j \cdots s_k
v').$   Moreover, if $j \geq 4$, then $m_{s_i s_j} = 2$
for all $1 \leq i \leq j-2$. If $j=3$, then $s_1 s_3 \cdots s_k v'$ has a reduced expression
beginning with a braid $s_1 s_3 s_1 \cdots $ of length $m_{s_1 s_3} < \infty$. Since $s_1 \not\in
D_L(s_4 \cdots s_k v')$, we conclude that $m_{s_1 s_3} = 2$. Hence part (2) holds for $j$.


Now suppose part (2) holds for all $j >j_0$. Since $|D_L(v)|=1,$ we have that $s_j\notin
D_L(s_{j_0} \cdots s_k v')$ for any $j>j_0.$  Thus part (1) holds for $j_0$. Hence (1) and (2) hold
for all $j$.


Now part (2), combined with the $\Tria$-avoiding condition, implies that
\begin{equation*}m_{s_i s_j} = \begin{cases} \infty & |i-j| = 1 \\ 2 & |i-j| \geq 2 \end{cases}.
\end{equation*}
In other words, if $|\supp(v)| \geq 4$ then $W_{\supp(v)}$ is defined entirely by commuting relations. We
show that this hypothesis implies that $|\supp(v)| \leq 2$. Indeed, suppose $|\supp(v)| \geq 3$, and let $u
= u_1 u_0$, where $u_1 \in {}^{\supp(v)} W$ and $u_0 \in W_{\supp(v)}$.  Here the set ${}^{\supp(v)} W$ denotes the minimal length representatives of the left cosets  $W/W_{\supp(v)}.$  By Equation \eqref{E:oneguy}, the
product $u_0 s_2 \cdots s_k v'$ must not be reduced. We conclude that $D_R(u_0) \cap D_L(s_2 \cdots
s_k v') = \{s_2\}$ since for any
$s_i,s_j\in \supp(u_0)\cup\supp(v)=\supp(v)$ we have $m_{s_is_j}= 2$ or $\infty.$  Moreover, since $D_L(s_1 s_3 \cdots
s_k v') = \{s_1,s_3\}$, the same argument shows that $s_3 \in D_R(u_0)$. But now we have $\{s_2,s_3\}\subseteq D_R(u_0)$ which implies that the $m_{s_2,s_3}$ is finite.  This contradicts the fact that $m_{s_2,s_3} =\infty$. Hence, $|\supp(v)| \leq 3.$

Finally, if $|\supp(v)|=3,$ then by Lemma \ref{L:prefix}, $m_{s_1s_2},m_{s_2s_3}\geq 3.$  If $m_{s_1s_3}=2,$ then the $\Tria$-avoiding condition implies $m_{s_1s_2}=m_{s_2s_3}=\infty.$ We can now apply the previous argument as above show that $\{s_2,s_3\}\subseteq D_R(u_0)$ and hence $m_{s_2,s_3}$ is finite.  Thus we must have $m_{s_1s_3}\geq 3.$  This completes the proof. \end{proof}


For the next step in the proof of Theorem \ref{T:mainchar}, suppose that $|\supp(v)|=3$ with $\supp(v)=\{r,s,t\}.$  By Proposition \ref{P:lessthan3}, we have $m_{rs},m_{rt},m_{st}\geq 3.$  Consider
the reduced factorization $v=xy$ where $$x^{-1}:=tsrtsr\cdots$$ is the largest spiral word prefix
of $v.$ In other words, we can write \begin{equation}\label{E:xy}v=xy=(\cdots rstrst)\cdot
y.\end{equation} Define $x':=xtst.$  It is easy to see that $\ell(x')=\ell(x)-1$ and that $x'$
equals $x$ with the second to last reflection $s$ removed.  For any $0\leq k\leq \ell(x')$ define a
length $k$ suffix $x_k'$ of $x'$ by
$$x_k':=\underbrace{\cdots rstrt}_k$$

\begin{lemma}\label{L:vchar1} For any $0\leq k\leq \ell(x'),$ the following are true:
\begin{enumerate}\item The product $x_k'y$ is a reduced factorization.

\smallskip

\item

If $k$ is even, then $|D_L(x_k'y)|=1.$  If $k$ is odd, then $|D_L(x_k'y)|\leq 2.$

\smallskip

\item If $|D_L(x_k'y)|=2$ and $k\geq 5,$ then $|D_L(x_{k-2}'y)|=2.$
\end{enumerate}
\end{lemma}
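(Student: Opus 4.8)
The plan is to prepend the letters of $x'$ to $y$ one at a time, tracking left descent sets with Lemmas \ref{L:sdescent} and \ref{L:stdescent}. Write $u_0:=y$ and $u_k:=x_k'y$, so that $u_k=c_ku_{k-1}$, where $c_k$ is the $k$-th letter of $x'$ read from the right; explicitly $c_1=t$, $c_2=r$, and for $j\geq 3$ the $c_j$ follow the spiral with period three, the only coincidences being $c_3=c_1$ and $c_k=c_{k-3}$ once $k\geq 5$. First I would pin down $D_L(y)$. Since the last letter of $x$ is $t$, reducedness of $v=xy$ forces $t\notin D_L(y)$; and since $x$ is the \emph{maximal} spiral prefix and $r$ is the next spiral letter, $r\in D_L(y)$ would let us extend $x$ to a longer spiral prefix, so $r\notin D_L(y)$. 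As $\supp(y)\subseteq\{r,s,t\}$ by Lemma \ref{L:prefix}, this leaves $D_L(y)=\{s\}$ (the case $y=e$ is the spiral case of Theorem \ref{T:mainchar}(3) and would be treated separately). I set $c_0:=s$.

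\textbf{Parts (1) and the bound $|D_L|\leq 2$.} I would prove reducedness of $u_k$ and the containment $D_L(u_k)\subseteq\{c_k,c_{k-1}\}$ simultaneously by induction on $k$, interleaving the parity fact below. Granting that $u_{k-1},u_{k-2}$ are reduced with $D_L(u_{k-1})\subseteq\{c_{k-1},c_{k-2}\}$, reducedness of $u_k=c_ku_{k-1}$ is exactly $c_k\notin D_L(u_{k-1})$; since $c_k\neq c_{k-1}$ and $c_k=c_{k-2}$ only at $k=3$, this is automatic except at $k=3$, where it follows once we know $D_L(u_2)=\{c_2\}$. Given reducedness, I apply Lemma \ref{L:stdescent} to $u_k=(c_kc_{k-1})u_{k-2}$, using $m_{c_kc_{k-1}}\geq 3$ from Proposition \ref{P:lessthan3}: because $\supp(u_{k-2})\subseteq\{r,s,t\}$ and all three pairwise labels are $\geq 3$, the exceptional set $\{p\in D_L(u_{k-2}):m_{c_kp}=m_{c_{k-1}p}=2\}$ is empty, giving $D_L(u_k)\subseteq\{c_k,c_{k-1}\}$; and $c_k\in D_L(u_k)$ by Lemma \ref{L:sdescent}. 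This yields part (1) and the bound $|D_L(u_k)|\leq 2$.

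\textbf{Parts (2) and (3).} Set $b_k=1$ if $|D_L(u_k)|=2$ (equivalently $c_{k-1}\in D_L(u_k)$) and $b_k=0$ otherwise. The key observation is that when $b_k=1$, both $c_k,c_{k-1}$ are descents, so $u_k$ begins with the longest element of $W_{\{c_k,c_{k-1}\}}$, a braid $c_kc_{k-1}c_k\cdots$ of length $m_{c_kc_{k-1}}\geq 3$; stripping the leading $c_k$ and then $c_{k-1}$ shows that $u_{k-2}$ begins with $c_k$, so $c_k\in D_L(u_{k-2})\subseteq\{c_{k-2},c_{k-3}\}$. For $k\geq 4$ one has $c_k\neq c_{k-2}$, hence $c_k=c_{k-3}$ and $c_{k-3}\in D_L(u_{k-2})$; since $c_{k-3}=c_{(k-2)-1}$ this says $b_{k-2}=1$, which is part (3) (the coincidence $c_k=c_{k-3}$ genuinely holds for $k\geq 5$). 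For part (2) I would check the small even cases directly from the recursion: $b_0=0$ since $D_L(u_0)=\{s\}$, $b_2=0$ since $b_2=1$ would force $c_2=r\in D_L(u_0)=\{s\}$, and $b_4=0$ since $c_4\notin\{c_2,c_1\}$ blocks $c_4\in D_L(u_2)$. The implication $b_k=1\Rightarrow b_{k-2}=1$ then descends any even index down to $b_4$ or $b_2$, a contradiction, so $b_k=0$ for all even $k$, while odd $k$ only inherit the bound $|D_L(u_k)|\leq 2$.

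\textbf{Main obstacle.} The descent computation $D_L(y)=\{s\}$ and the containment step are routine given the two descent lemmas and the all-$m\geq 3$ triangle. The delicate part is the braid bookkeeping of part (3): correctly translating ``$c_{k-1}\in D_L(u_k)$'' into ``$u_{k-2}$ begins with $c_k$'' via the maximal element of $W_{\{c_k,c_{k-1}\}}$, and then reading off from the explicit period-three pattern of the $c_j$ that this forces the coincidence $c_k=c_{k-3}$ together with $b_{k-2}=1$. Keeping the index ranges straight---the exceptional behaviour at $k=3,4$ versus the periodic regime $k\geq 5$---is where the care is needed.
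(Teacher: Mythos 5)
Your proposal is correct and follows essentially the same route as the paper's proof: induction on the prefix length, pinning down $D_L(y)=\{s\}$ from reducedness and maximality of the spiral, applying Lemmas \ref{L:sdescent} and \ref{L:stdescent} (with the exceptional set empty because all pairwise labels on $\{r,s,t\}$ are $\geq 3$ by Proposition \ref{P:lessthan3}) to get the containment $D_L(x_k'y)\subseteq\{c_k,c_{k-1}\}$, and using the braid-stripping observation to push a double descent down from index $k$ to index $k-2$. The only difference is organizational: the paper proves the even-index singleton property directly inside the induction (relabeling the generators at each step and splitting into parity cases), whereas you derive it afterwards from the propagation implication $b_k=1\Rightarrow b_{k-2}=1$ together with the base cases $b_0=b_2=b_4=0$ --- a valid and arguably cleaner bookkeeping of the same ingredients.
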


\begin{proof}
If $k=0,$ then $r,t\notin D_L(y)$ since $x$ is a maximal length spiral word.  This implies that
$D_L(y)=\{s\}.$  If $k=1$, then $ty$ is reduced and $D_L(ty)\subseteq\{s,t\}.$  Moreover, $rty$ is
reduced and by Lemma \ref{L:stdescent}, we have that $D_L(rty)=\{r\}$ since $r\notin D_L(y)$. This
proves the lemma for $k\leq 2.$


We proceed with the proof by induction on $k$. Suppose $k \geq 3$. Without loss of generality, we
can assume $r\in D_L(x_k')$, so that $s$ is the first element of $x_{k-1}'$.  We first consider the
case where $k$ is odd.  Then by the inductive assumption, we have that $D_L(x_{k-1}'y)=\{s\}$.
Hence
$$x_k'y=rx_{k-1}'y$$ is reduced and $D_L(x_{k}'y)\subseteq\{r,s\}$.  If $k$ is
even, then $s$ and $t$ are the first two elements of $x_{k-1}'$; in particular,
$r$ is not one of the first two elements. Therefore
$$D_L(x_{k-2}'y)=\{t\}\quad \text{and}\quad D_L(x_{k-1}'y)\subseteq\{s,t\}.$$  So $x_k'y$ is reduced and $D_L(x_k'y)=\{r\}.$
This proves parts (1) and (2) of the lemma.


To prove part (3), suppose that $k\geq 5$ is odd with
$$t\in D_L(x_{k-2}'y)\subseteq \{t,r\}\quad \text{and}\quad r\in D_L(x_k'y)\subseteq\{r,s\}.$$  If $|D_L(x_k'y)|=2,$ then $r\in D_L(x_{k-2}'y)$ since $3\leq m_{rs}<\infty.$   Hence
$|D_L(x_{k-2}'y)|=2.$
\end{proof}

One immediate consequence of Lemma \ref{L:vchar1} is that $x'y$ is a reduced factorization and that
if $\ell(x')$ is even, then $x'y\in W^J\cap \pred(v)$ which is a contradiction to Equation
\eqref{E:onepred}.  Hence $\ell(x')$ is odd (i.e. $\ell(x)$ is even).  The following lemma is a
preliminary characterization of $v.$

\begin{lemma}\label{L:vchar2}
The spiral word $x$ satisfies one of the following conditions:
\begin{enumerate}\item $m_{rt}=3$ and $\ell(x)=4.$
\item $m_{rt}=m_{rs}=3$ and $\ell(x)=6.$
\item $m_{rt}=m_{rs}=m_{st}=3$ and $\ell(x)\geq 8.$
\end{enumerate}
\end{lemma}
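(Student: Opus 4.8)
Looking at this lemma, I need to prove that the spiral word $x$ (the largest spiral prefix of $v$, with $\ell(x)$ even) satisfies one of three specific conditions involving the values of $m_{rt}, m_{rs}, m_{st}$ and $\ell(x)$.

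Let me think about the structure here. We have $v = xy$ where $x^{-1} = tsrtsr\cdots$ is the largest spiral prefix, $\ell(x)$ is even, and $\{r,s,t\}$ generates a triangle group with all $m \geq 3$. The element $x'= xtst$ has odd length, and $x'y \in W^J \cap \pred(v)$ would contradict uniqueness — so I've already established $\ell(x)$ even.

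Let me sketch my proof plan.

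=== PROOF PROPOSAL ===

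The plan is to extract constraints from Lemma \ref{L:vchar1}, primarily part (3), which forces the sizes of the descent sets $|D_L(x_k'y)|$ to propagate downward along the spiral. The key observation is that uniqueness (Equation \eqref{E:onepred}) already forbids $x'y$ from being a predecessor in $W^J$, so whenever $|D_L(x_k'y)|=2$ for some $k$, the element $x_k'y$ has two left descents and we can try to build a competing element of $W^J\cap\pred(v)$, again contradicting uniqueness unless very rigid numerical conditions hold. First I would analyze the base of the spiral: since $x'_k$ for small odd $k$ records the last few letters of the spiral word, I would compute $|D_L(x_k'y)|$ explicitly for $k=1,3,5$ and relate the jump from $|D_L|=1$ to $|D_L|=2$ to the specific finite values $m_{rt}, m_{rs}, m_{st}$. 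The point is that $|D_L(x_k'y)|=2$ can only occur at an odd index $k$ corresponding to completing a braid $xyxy\cdots$ of length $m_{xy}-1$ in some rank-$2$ subgroup, via Lemma \ref{L:stdescent} and Lemma \ref{L:sdescent}.

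The main structural step is to show that $\ell(x)$ cannot be too large relative to the braid exponents. I would argue that if $\ell(x) \geq 8$, then the spiral has cycled through all three generators at least twice, and Lemma \ref{L:vchar1}(3) forces $|D_L(x_k'y)|=2$ all the way down to $k=3$ or $k=1$, which by the base computation pins down $m_{rt}=m_{rs}=m_{st}=3$ (case 3). The braid $rtr$ of length $m_{rt}-1$ must be completable for a second descent to appear, so any $m$-value strictly larger than $3$ would cause the descent set to stay size $1$ past the relevant index, truncating the spiral — this is what distinguishes cases (1) ($\ell(x)=4$, only $m_{rt}=3$ forced), (2) ($\ell(x)=6$, $m_{rt}=m_{rs}=3$), and (3). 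Concretely, in case (1) the spiral is $tsrt$, and requiring $x'y = (xtst)y = (tsrt\cdot tst)y$ to reduce and lie in $W^J$ forces $m_{rt}=3$; a longer spiral would need $m_{rs}=3$ as well to maintain reducedness of the next braid segment.

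I expect the main obstacle to be the careful bookkeeping of which generator sits at which position in the spiral as $k$ increases, since the roles of $r,s,t$ rotate cyclically; keeping track of ``the first element of $x'_k$'' and applying Lemma \ref{L:stdescent} with the correct pair at each step is where errors creep in. The delicate point is proving the \emph{upper} bounds on $\ell(x)$ in cases (1) and (2): I must show that if $m_{rt}=3$ but $m_{rs}>3$, the spiral cannot extend past length $4$, because attempting to prepend the next spiral letter would create a reduced $x_5'y$ whose descent set, by Lemma \ref{L:vchar1}(3) and the base analysis, would have to be size $2$ at an index incompatible with $m_{rs}>3$. I would isolate this as the crux: the interplay between the finite braid exponent $m_{rs}$ governing when a second descent can appear and the forced parity/uniqueness constraints from Equation \eqref{E:onepred} is exactly what produces the trichotomy, with the $(3,3,3)$ case being the only one where the spiral can grow without a length bound.
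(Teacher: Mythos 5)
Your overall skeleton is indeed the paper's: propagate two-element left descent sets down the spiral with Lemma \ref{L:vchar1}(3), convert $|D_L(x_k'y)|=2$ at small $k$ into braid conditions via Lemma \ref{L:sdescent}, and read off the trichotomy from $\ell(x)$. But your proposal never establishes the anchor that makes the propagation run, and the mechanism you offer in its place is inverted. Lemma \ref{L:vchar1}(3) only transports a two-element descent set \emph{downward from a level where it is already known}; the paper obtains that level as follows: $x'y$ is reduced and lies in $\pred(v)$, by Equation \eqref{E:onepred} it cannot lie in $W^J$, and since its first letter equals the first letter of $x$, which is the unique left descent of $v$ and hence lies \emph{outside} $J$, the set $D_L(x'y)$ must contain a second element belonging to $J$; Lemma \ref{L:vchar1}(2) then gives exactly $|D_L(x'y)|=2$ at $k=\ell(x')$. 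Your sketch instead asserts that whenever $|D_L(x_k'y)|=2$ one can ``build a competing element of $W^J\cap\pred(v)$, contradicting uniqueness unless very rigid numerical conditions hold,'' and later that requiring $x'y$ ``to reduce and lie in $W^J$ forces $m_{rt}=3$.'' Both statements have the logic backwards: an element with two left descents, one of them in $J$, is precisely \emph{not} in $W^J$, so two descents produce no competitor; the contradiction with \eqref{E:onepred} arises in the opposite configuration, when $|D_L(x'y)|=1$, for then $x'y$ itself lies in $W^J\cap\pred(v)$. Two descents is the forced, consistent configuration, and it is what yields the relations $m=3$; without the anchor, your claim that $\ell(x)\geq 8$ ``forces $|D_L(x_k'y)|=2$ all the way down'' is unsupported.

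Two smaller corrections. The base computations sit at $k=3,5,7$, not $k=1,3,5$: the paper shows $|D_L(trty)|=2\iff m_{rt}=3$, $|D_L(rstrty)|=2\iff m_{rt}=m_{rs}=3$, and $|D_L(strstrty)|=2\iff m_{rt}=m_{rs}=m_{st}=3$, and the lemma follows according as $\ell(x')=3$, $\ell(x')=5$, or $\ell(x')\geq 7$ (one also needs $\ell(x')\geq 3$, which you do not address). Finally, your concrete case-(1) computation is garbled by confusing $x$ with $x^{-1}$: with the paper's convention $x^{-1}=tsrt$ one has $x=trst$ and $x'=xtst=trt$, so the relevant element is $trty$; the product $(tsrt\cdot tst)y$ you write down has length $\ell(v)+1$ and is not a predecessor of $v$ at all.
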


\begin{proof}
Since $x'y$ is reduced, we have that $x'y\notin W^J$ and $\ell(x')\geq 3.$   Furthermore, by Lemma
\ref{L:vchar1} part (3), $|D_L(x_k'y)|=2$ for all $k\geq 3.$  In particular the following
statements are true:

\begin{itemize}
\item For $k=3,$ we have that $|D_L(trty)|=2$ if and only if $m_{rt}=3.$

\smallskip

\item For $k=5,$ we have that $|D_L(rstrty)|=2$ if and only if $m_{rt} = m_{rs}=3.$

\smallskip

\item For $k=7,$ we have that $|D_L(strstrty)|=2$ if and only if $m_{rt} = m_{rs} = m_{st}=3.$
\end{itemize}
This completes the proof.
\end{proof}

Now we consider the reduced factorization
\begin{equation}\label{E:vshiftedfactors}
v=xy=(\cdots rstrst)\cdot y=\lefteqn{\overbrace{\phantom{(\cdots rstr)(st}}^x}(\cdots rstr)(
\underbrace{st \ \lefteqn{\overbrace{\phantom{st\cdots)\cdot\bar y }}^y}st\cdots}_{\text{length $k$}} )\cdot\bar y,
\end{equation}
where $k$ is the length of the longest possible prefix of $sty$ the form $stst\cdots$.

\begin{lemma}\label{L:vchar3} With $v$ as Equation \eqref{E:vshiftedfactors}, the following are true:
\begin{enumerate}
\item $\bar y=e.$
\item $k=m_{st}-1$
\end{enumerate}\end{lemma}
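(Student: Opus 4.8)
The plan is to prove both assertions by contradiction, using the single–predecessor condition \eqref{E:onepred}, which says that $vz$ is the \emph{only} element of $W^J\cap\pred(v)$, where $z$ is the unique right descent of $v$; since the last letter of any reduced word is a right descent, $z$ is the last letter of $v$ and $vz$ is obtained from $v$ by deleting that last letter. Throughout I keep the factorization $v=x''\,b_k\,\bar y$ of \eqref{E:vshiftedfactors}, where $b_k=\underbrace{stst\cdots}_{k}$ is the maximal $\{s,t\}$–alternating block (so $b_k$ begins with $s$ and $k\ge 2$), $x''$ ends in $r$, and—by maximality of the block together with reducedness, since $\supp(v)=\{r,s,t\}$—whenever $\bar y\neq e$ its first letter must be $r$. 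The common strategy is: assuming a claim fails, delete one suitably chosen letter of $v$ \emph{other than the last}, and verify that the resulting element (i) is reduced of length $\ell(v)-1$, hence a genuine predecessor, (ii) still lies in $W^J$, i.e.\ its unique left descent is the single generator in $D_L(v)$ (which lies outside $J$), and (iii) is different from $vz$. Any such element contradicts \eqref{E:onepred}.

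I would first settle the easy half of (2), namely $k\le m_{st}-1$, straight from $|D_R(v)|=1$: if $k=m_{st}$ then $b_k$ is the longest element of the dihedral group $W_{\{s,t\}}$, which has both $s$ and $t$ as right descents, so $v$ would have at least two right descents. For the reverse inequality, assume $k\le m_{st}-2$ and delete the final $r$ of $x''$, the letter immediately preceding the block. This merges that position with $b_k$ into the longer alternating block $\underbrace{tst\cdots}_{k+1}$, which is \emph{still shorter} than the longest element of $W_{\{s,t\}}$; running the descent computation of Lemma \ref{L:stdescent} exactly as in the induction of Lemma \ref{L:vchar1} shows the new word is reduced and has the same unique left descent, so it lies in $W^J$ and is visibly distinct from $vz$—a contradiction, giving $k=m_{st}-1$. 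For claim (1), suppose $\bar y\neq e$, so $\bar y$ begins with $r$; now there is ``room'' after the (length $m_{st}-1$) block, and one deletes an appropriate interior letter of the block so that the leading $r$ of $\bar y$ no longer forces the alternating part to the longest element. Again Lemma \ref{L:stdescent} certifies that the result is reduced, keeps its left descent outside $J$, and differs from $vz$, contradicting \eqref{E:onepred}. Together these force $\bar y=e$ and $k=m_{st}-1$, which via Lemma \ref{L:vchar2} yields the three cases of Theorem \ref{T:mainchar}.

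The crux—and the step most likely to cause trouble—is (ii): showing that deleting an interior letter does not create a spurious left descent lying in $J$. This is precisely where $\Tria$–avoidance and Lemma \ref{L:stdescent} are indispensable, because a newly created left descent can only propagate toward the front of the word once an exposed $\{s,t\}$–block reaches the longest element of $W_{\{s,t\}}$; this dichotomy is exactly what pins the threshold at $k=m_{st}-1$. Extra care is needed at the boundary $m_{st}=3$ (the spiral case, Lemma \ref{L:vchar2}(3)), where $m_{st}-1=2$ is already the minimal block length: there the ``expose a longer block'' deletion becomes non-reduced, but the lower bound $k\ge m_{st}-1=2$ holds automatically from $k\ge 2$, so no separate argument is required.
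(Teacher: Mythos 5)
Your overall strategy---delete a letter of $v$ other than the last one, check the result is a reduced predecessor lying in $W^J$, and contradict the uniqueness statement \eqref{E:onepred}---is exactly the paper's. But the order in which you run the steps is circular, and the step you call the ``easy half'' is invalid at the point where you invoke it. You prove $k\le m_{st}-1$ first, arguing that if $k=m_{st}$ then the block $b_k=w_0^{\{s,t\}}$ has both $s$ and $t$ as right descents, hence so does $v$. Right descents propagate from a \emph{suffix} to the whole element (if $v=ab$ is reduced and $x\in D_R(b)$, then $\ell(vx)\le\ell(a)+\ell(bx)<\ell(v)$), but not from an interior factor: for $v=abc$ with $c\neq e$, an element of $D_R(b)$ need not be a right descent of $v$. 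For instance, in the $(3,3,3)$ group the element $r\,sts\,r$ contains $w_0^{\{s,t\}}=sts$ as an interior factor yet has $D_R=\{r\}$. So your upper-bound argument is only correct once you already know $\bar y=e$, i.e.\ once part (1) is proved. Your proof of part (1), however, explicitly presupposes ``the (length $m_{st}-1$) block,'' i.e.\ part (2). Each half rests on the other, so as structured the proposal proves neither.

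The paper breaks this circle by proving (1) first, with an argument needing nothing about $k$ versus $m_{st}$: if $\bar y\ne e$ (so $D_L(\bar y)=\{r\}$ by maximality of $k$, and $k\ge 3$, since $k=2$ would contradict maximality of the spiral $x$), one deletes the \emph{last} letter of the block, obtaining $\bar x\bar z'\bar y$ with $\ell(\bar z')=k-1\ge2$, which is reduced, lies in $W^J$, and differs from $vz$, contradicting \eqref{E:onepred}. Note also that your phrase ``delete an appropriate interior letter of the block'' cannot be taken literally: deleting a letter strictly inside an alternating $\{s,t\}$-word creates two equal adjacent letters, hence a non-reduced word; the only workable deletion is the block's final letter, which is the paper's move. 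Once (1) is in hand, your right-descent argument for $k<m_{st}$ becomes correct (and is in fact slicker than the paper's, which instead shows $v\notin W^J$ by rewriting $w_0^{\{s,t\}}$ to begin with $t$), and your lower-bound argument---delete the final $r$ of $\bar x$, merging it into an alternating block of length $k+1\le m_{st}-1$---is precisely the paper's; but it too should be run after (1), and the reducedness and $W^J$-membership checks you defer to Lemma \ref{L:stdescent} are carried out in the paper concretely, case by case, using the explicit forms $v=tr\bar z$ and $v=rstr\bar z$ coming from Lemma \ref{L:vchar2}.
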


\begin{proof}
Suppose that $\bar y\neq e.$  Then $D_L(\bar y)=\{r\}$ by the maximality of $k.$  If $k=2,$ then
$x$ is not a maximal length spiral, and hence $\bar y= e.$
Now assume that $k\geq 3$ and let $v=\bar x\bar z\bar y$ be the reduced factorization given in
\eqref{E:vshiftedfactors} where $\bar z\in W_{\{s,t\}}$ is of length $k.$   Without loss of
generality, let $t\in D_R(\bar z)$ and define $\bar z':=\bar zt.$  Since $k\geq 3,$ we have
$\ell(\bar z')\geq 2$ and thus $\bar x\bar z'\bar y$ is a reduced factorization.  Likewise, since
$\ell(\bar z')\geq 2$ and $D_R(\bar x)=\{r\},$ we have that $\bar x\bar z'\bar y\in W^J$ and hence
$\bar x\bar z'\bar y\in W^J\cap\pred(v).$  But this contradicts Equation \eqref{E:onepred}.
Therefore $\bar y= e$ and part (1) of the lemma is proved.


Since $\ell(x)$ is even, we have that  $k<m_{st},$ otherwise $v\notin W^J.$  This completes the proof in the case of $x$ as in Lemma \ref{L:vchar2} part (3).  Now suppose
that $k\leq m_{st}-2$. If $x$ satisfies the condition in Lemma \ref{L:vchar2} part (1), then
$J=\supp(w)\setminus\{t\}$ and we can write
$$v=tr\underbrace{stst\cdots }_{\text{length $k$}}=tr\bar z.$$
But then $t\bar z\in W^J\cap\pred(v)$ which contradicts Equation \eqref{E:onepred}. If $x$
satisfies the condition in Lemma \ref{L:vchar2} part (2), then $J=\supp(w)\setminus\{r\}$ and
$$v=rstr\underbrace{stst\cdots }_{\text{length $k$}}=rstr\bar z.$$  But then $rst\bar z\in
W^J\cap\pred(v)$ which also contradicts Equation \eqref{E:onepred}.  Hence $k>m_{st}-2$ and part
(2) of the lemma is proved.
\end{proof}

It is easy to see that Lemmas \ref{L:vchar1}, \ref{L:vchar2} and \ref{L:vchar3} prove the
characterization $v$ when $|\supp(v)|=3$ in Theorem \ref{T:mainchar}.


The final step in the proof is to show that $w=uv$ is a BP-decomposition. In this step,
we do not assume that $|S(v)|=3$.

\begin{lemma}\label{L:BPlemma2}
For any $s_0\in\supp(v)\cap J,$ there exists $v''\in W^J$ of length $\ell(v'')=\ell(v)-2$ such that
$s_0v''\in\pred(v).$\end{lemma}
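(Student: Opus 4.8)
The plan is to reformulate the conclusion in terms of parabolic decompositions. Since $s_0\in J$ and every $v''\in W^J$ satisfies $s_0\notin D_L(v'')$, the product $s_0v''$ is automatically reduced and is its own parabolic decomposition with respect to $J$; so $\ell(s_0v'')=\ell(v)-1$ is automatic, and the lemma is equivalent to producing a predecessor $x\lessdot v$ (that is, $x\le v$ with $\ell(x)=\ell(v)-1$) whose $W_J$-component is the single generator $s_0$. First I would record the setup: $v\in W^J$ is Grassmannian, so its unique left descent $s^*$ is the one generator of $\supp(v)$ not in $J$, whence $\supp(v)\cap J=\supp(v)\setminus\{s^*\}$; and by Proposition \ref{P:lessthan3} together with the characterization already established in Lemmas \ref{L:vchar1}--\ref{L:vchar3}, $v$ is either of the explicit forms listed in Theorem \ref{T:mainchar} or has $|\supp(v)|\le 2$. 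I would stress that the $\Tria$-avoidance hypothesis is genuinely used here and that this is not a formal Bruhat-order fact: already inside $S_4$ (which contains the triangle $(2,3,3)\in\Tria$) there are Grassmannian $v$ and generators $s_0\in\supp(v)\cap J$ admitting no predecessor with $W_J$-component $s_0$. This is of course consistent with Proposition \ref{P:HQprop2}, and it forces the argument to run through the classification.

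When $|\supp(v)|\le 2$ the statement is easy: either $\supp(v)\cap J=\emptyset$ and there is nothing to prove, or $v=s^*s_0s^*\cdots$ is a single alternating word, in which case deleting the leading $s^*$ gives $x=s_0s^*s_0\cdots\le v$ of length $\ell(v)-1$ with $x=s_0v''$ and $v''=s^*s_0\cdots\in W^J$. For $|\supp(v)|=3$ with $\supp(v)=\{r,s,t\}$ the tactic is, for each $s_0\in\supp(v)\cap J$, to delete one suitable letter of the standard reduced word of $v$ to obtain a coatom $x\le v$, and then use the rank-$2$ relation in $W_{\{s^*,s_0\}}$ (either the length-$3$ braid move $s^*s_0s^*=s_0s^*s_0$ when $m_{s^*s_0}=3$, or the longest element of $W_{\{s^*,s_0\}}$ when $m_{s^*s_0}<\infty$; both are supplied by the classification) to rewrite $x$ with $s_0$ at its left end, so that $s_0\in D_L(x)$. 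Equivalently, and perhaps more transparently, I would note that any such $v''$ automatically satisfies $v''\le v$ and locate it directly among the $W^J$-elements of length $\ell(v)-2$ lying below $v$ --- these are explicit from the proof of Lemma \ref{L:rank3v} --- and then verify $s_0v''\le v$ by exhibiting $s_0v''$ as a subword of the standard reduced word of $v$.

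Having produced $x$, I would confirm $x_J=s_0$ by stripping $s_0$ and checking $s_0x\in W^J$, i.e. $D_L(s_0x)\cap J=\emptyset$; this is exactly where Lemma \ref{L:stdescent} is applied, as it computes the left descent set after prepending the exposing braid and pins it down using $\Tria$-avoidance. The main obstacle will be the interior spiral generators --- for instance $s_0=r$ in part (1), or the generator $s$ in part (2) --- for which the naive deletion of the first letter yields a coatom whose $W_J$-component is a product of two generators rather than the single generator $s_0$. The resolution is to instead delete the letter that triggers the length-$3$ braid collapse $s^*s_0s^*=s_0s^*s_0$ guaranteed by the characterization (this is precisely why the triangles $(3,m_{rs},m_{st})$, $(3,3,m_{st})$, and $(3,3,3)$ all have $m_{s^*s_0}=3$ for the relevant interior $s_0$), which isolates $s_0$ as the unique left $J$-descent. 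The remaining ``tail'' generator in each part is handled by taking $s_0v''$ to be the longest element of the finite dihedral group $W_{\{s^*,s_0\}}$, with $s_0v''\le v$ again following from the subword property. Carrying this out for each $s_0$ in each of the three cases completes the proof.
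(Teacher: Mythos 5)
Your overall plan follows the same route as the paper's own proof: reduce to the classification of $v$ coming from Proposition~\ref{P:lessthan3} and Lemmas~\ref{L:vchar1}--\ref{L:vchar3}, then for each $s_0\in\supp(v)\cap J$ exhibit a coatom of $v$ by deleting one letter of a reduced word and applying braid moves to pull $s_0$ to the left end, finally checking $v''\in W^J$ via descent considerations. Your rank~$\le 2$ case, your treatment of part~(1), and your braid-collapse device for the ``interior'' generator (the paper's $s_0=s$ case, where $x'y=(srs)y'$ and $v''=rsy'$) are all correct and match the paper; your $S_4$ remark is also accurate.

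The genuine gap is your recipe for the remaining (``tail'') generator: taking $s_0v''$ to be the longest element of $W_{\{s^*,s_0\}}$ works \emph{only} in part~(1) of the classification, where that dihedral group is $W_{\{t,s\}}$ with $m_{st}<\infty$ and its longest element has length exactly $m_{st}=\ell(v)-1$. In part~(2) the remaining generator is $t$ and $m_{s^*t}=m_{rt}=3$, so the longest element of $W_{\{s^*,t\}}$ has length $3$, while elements of $\pred(v)$ must have length $\ell(v)-1=m_{st}+2\ge 6$; the element you produce lies below $v$ but is not a coatom, so it does not satisfy the lemma. The same failure occurs in part~(3) whenever $\ell(v)>4$: there all $m$'s equal $3$, so every element of a rank-two standard parabolic has length at most $3$. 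Indeed, in part~(2), and in part~(3) for $\ell(v)>4$, every coatom of $v$ has full support $\{r,s,t\}$, so no dihedral element whatsoever can serve as $s_0v''$; these generators also require a braid-collapse construction. That is exactly what the paper does in its $s_0=t$ case: writing $v=(rs)(ty'')$ (with $r$ the unique left descent of $v$), it deletes the second letter to obtain the coatom $rty''\in\pred(v)$, shows $t\in D_L(rty'')$ using Lemma~\ref{L:sdescent} together with Equation~\eqref{E:onepred} and the explicit structure of $v$, writes $rty''=(trt)y'''$ reduced, and sets $v''=rty'''$. Without this (or an equivalent) construction, your argument leaves the second generator in parts~(2) and~(3) unproved.
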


\begin{proof}
If $|\supp(v)|\leq 2,$ then the lemma is obvious.  If $|\supp(v)|=3,$ then we can write $v=xy$ as
in Equation \eqref{E:xy} with the notational change that $$x=rstrst\cdots.$$ In other words, we
let $r,s,t$ denote the first three simple reflections appearing in $x$, rather than the last three.
We want to find
$v''$ for $s_0\in\supp(v)\cap J=\{s,t\}.$  Note that with the change in notation, we have that
$m_{rs}=3.$  Recall the definition of $x'$ given after Equation \eqref{E:xy}.  By Lemma
\ref{L:vchar1} part (1) we have that $x'y$ is reduced and hence $x'y\in \pred(v)\cap W^{\{r,s\}}.$
Thus we have a reduced factorization $$x'y=(srs)y'$$ for some $y'.$ For $s_0=s,$ we set $v''=rsy'.$
Then $v''\in W^J$ since $D_L(v'')=\{r\}.$


We now find a $v''$ for $s_0=t.$  Consider the reduced factorization $$v=(rs)(ty'').$$  Clearly $r\notin D_L(ty''),$ otherwise
$v\notin W^J.$  Hence $rty''\in\pred(v)$ and $rty''\notin W^J.$  This implies that $t\in
D_L(rty'')$ and we can write a reduced factorization $$rty''=(trt)y'''$$ for some $y'''.$  We set
$v''=rty'''$ for $s_0=t.$  Since $D_L(v'')=\{r\},$ we get that $v''\in W^J.$  This completes the
proof.\end{proof}

If $s_0 \in S(v) \cap J$ and $v'' \in W^J$, such that $s_0 v'' \in \pred(v)$, then $s_0 \in
D_R(u)$.  Otherwise $u s_0 v'' \in u\cdot\pred(v) \cap \pred(w)$ which contradicts Equation
\eqref{E:oneguy}. Applying Lemma \ref{L:BPlemma}, we get that $w = uv$ is a BP-decomposition. This
completes the proof of Theorem \ref{T:mainchar}.

\subsection*{Acknowledgements} The second author would like to thank the University of British Columbia for its hospitality.
The authors thank Sara Billey for her useful emails and Jim Carrell and Alex Woo for some helpful
discussions.  The authors would also like to thank the referees for their suggestions on improving the manuscript.  The second author was partially supported by NSF grant DMS-1007255.

\bibliographystyle{plain}

\end{document}